\newtheorem{pro}{Proposition}[section]
\newtheorem{teo}{Theorem}[section]
\newtheorem{cor}{Corollary}[section]
\newtheorem{lem}{Lemma}[section]
\theoremstyle{definition}
\newtheorem{defi}{Definition}[section]
\theoremstyle{remark}
\newtheorem{rem}{Remark}[section]
\begin{document}

\title[]
{Fixed points of nilpotent actions on ${\mathbb S}^{2}$}

\author{Javier Rib\'{o}n}
\address{Instituto de Matem\'{a}tica, UFF, Rua M\'{a}rio Santos Braga S/N
Valonguinho, Niter\'{o}i, Rio de Janeiro, Brasil 24020-140}
\thanks{e-mail address: javier@mat.uff.br}
\thanks{MSC-class. Primary: 37C85, 37E30; Secondary: 37C25, 55M20, 57S25}

\thanks{Keywords: local diffeomorphism, derived length, solvable group, nilpotent group}
\maketitle

\bibliographystyle{plain}
\section*{Abstract}
We prove that a nilpotent subgroup of
orientation preserving $C^{1}$ diffeomorphisms of
${\mathbb S}^{2}$ has a finite orbit of cardinality at most two.
We also prove that a finitely generated nilpotent subgroup of orientation
preserving $C^{1}$ diffeomorphisms of ${\mathbb R}^{2}$
preserving a compact set has a global fixed point. These
results generalize theorems of Franks, Handel and Parwani for the
abelian case.

We show that a nilpotent subgroup of
orientation preserving $C^{1}$ diffeomorphisms of ${\mathbb S}^{2}$
that has a finite orbit of odd cardinality also has a global fixed point.
Moreover we study the
properties of the two-points orbits of
nilpotent fixed-point-free subgroups of
orientation preserving $C^{1}$ diffeomorphisms of ${\mathbb S}^{2}$.
\section{Introduction}
Consider a closed surface $S$ with non-vanishing Euler characteristic.
The classical Poincar\'{e} theorem asserts that any $C^{1}$ vector field defined
in $S$ has a singular point.
Lima \cite{Lima-com} proved that this result can be extended to any finite family
of pairwise commuting $C^{1}$ vector fields and then to any continuous action
of a connected abelian finite-dimensional Lie group $G$ on $S$. Plante generalized
the previous result for connected nilpotent finite-dimensional Lie groups \cite{Plante-fix}.

 Subgroups of $\mathrm{Diff}_{0}^{1}(S)$ are
the natural generalization of continuous actions of connected Lie groups
(we denote by $\mathrm{Diff}^{1}(S)$ and $\mathrm{Diff}_{0}^{1}(S)$
the group of $C^{1}$ diffeomorphisms of $S$ and its subgroup of isotopic
to the identity elements respectively).
The Lefschetz fixed point formula implies that any element of
$\mathrm{Diff}_{0}^{1}(S)$ has a fixed point.
An intermediate case between the study of continuous actions of Lie algebras
and the general case is obtained by considering subgroups $G$ of
$\mathrm{Diff}^{1}(S)$
whose generators are close to the identity map in
the $C^{1}$ topology. The orbits of $\phi \in G$
are contained in poligonal curves that are almost tangent to the trajectories of
a vector field $X$ associated to $\phi$ (cf. \cite{Bonatti-com}).
Bonatti took profit of
this flow-like behavior to prove that any commutative subgroup $G$
of $\mathrm{Diff}_{0}^{1}(S)$
(where $S$ is a compact surface with non-vanishing Euler characteristic $\chi (S)$)
whose generators
are $C^{1}$-close to the identity has a global fixed point \cite{Bonatti-s2} \cite{Bonatti-com},
i.e. a common fixed point for all the elements of $G$.
This result was generalized by Druck, Fang and Firmo for nilpotent groups
of diffeomorphisms of the sphere
in the $C^{1}$-close to the identity setting \cite{D-F-F}.

 Let us say a word about the $C^{1}$ hypothesis.
A recent result of Parkhe shows that a finitely generated virtually nilpotent subgroup
of orientation-preserving homeomorphisms of ${\mathbb S}^{1}$ is always topologically
conjugated to a group of $C^{1}$ diffeomorphisms \cite{Parkhe-smooth}. Nevertheless
continuous and $C^{1}$ dynamics are quite different in dimension $2$.
For instance the homeomorphism $f: {\mathbb S}^{2} \to {\mathbb S}^{2}$
defined by $f(r,\theta)=(r, \theta + 1/r)$ in polar coordinates, is not topologically
conjugated to a diffeomorphism since points rotate arbitrarily fast around the origin
when $r \to 0$.

 Another property of dimension $1$ is that the elements of finitely generated
nilpotent subgroups $\Gamma$
of orientation-preserving diffeomorphisms of ${\mathbb S}^{1}$ can be supposed arbitrarily
close of isometries.
Moreover precisely, Navas showed that there exists a path $(\phi_{t})_{t \in [0,\infty)}$
of homeomorphisms of ${\mathbb S}^{1}$ such that
$\phi_{t} \circ f \circ \phi_{t}^{-1}$ tends to a rotation
in the $C^{1}$ topology when $t \to \infty$ for any $f \in \Gamma$ \cite{Navas-rappro}.
An analogue for the $2$-sphere would allow to use the ideas in  \cite{Bonatti-com} and
\cite{D-F-F} to study nilpotent actions by $C^{1}$ diffeomorphisms.
However such an analogue is false.
Consider irrational numbers $\alpha \neq \beta$ and $C^{\infty}$ functions
$g:(0,\infty) \to (0,\infty)$ and $h:(0,\infty) \to {\mathbb R}$ such that
$g(r)=r$ for any $r \in (0,1] \cup [2, \infty)$, $g(r) > r$ for any $r \in (1,2)$,
$h(r) = \alpha$ for any $r \in (0,1]$ and $h(r) = \beta$ for any $r \in [2,\infty)$.
The $C^{\infty}$ diffeomorphism $f: {\mathbb S}^{2} \to {\mathbb S}^{2}$ defined by
$f(r,\theta)=(g(r), \theta + h(r))$ is a pseudo-rotation since
its unique periodic points are the fixed points $r=0$ and $r=\infty$.
Anyway there is no sequence of topological conjugates of $f$
that converge to a rotation $R$ in the $C^{1}$ topology since
then the rotation number of $R$ would be equal to both $\alpha$ and $\beta$.

 A natural idea to extend the global fixed point results to
general subgroups of
$\mathrm{Diff}_{0}^{1}({\mathbb S}^{2})$ is replacing flows (or almost flows) with
isotopies.
In this way Franks, Handel and Parwani solve completely the problem of
existence of fixed points for commutative groups. They prove that an abelian
subgroup of $\mathrm{Diff}_{0}^{1}(S)$ ($\chi (S) < 0$) has always global fixed points
(and even more that there are fixed points in the identity lift of the group)
\cite{FHP-g}. In the case $\chi (S)>0$, i.e. $S={\mathbb S}^{2}$, the group
$\mathrm{Diff}_{0}^{1}({\mathbb S}^{2})$ coincides with the group
$\mathrm{Diff}_{+}^{1}({\mathbb S}^{2})$ of orientation-preserving $C^{1}$ diffeomorphisms.
In contrast with the negative Euler characteristic case,
there exist fixed-point-free commutative
groups of M\"{o}bius transformations (the group generated by two involutions with
disjoint fixed point sets such that any involution permutes the fixed points of
the other involution). Franks, Handel and Parwani characterize the subgroups
of $\mathrm{Diff}_{+}^{1}({\mathbb S}^{2})$ that have global fixed points.
\begin{teo}
\label{teo:FHP}
\cite{FHPs}
For any abelian subgroup $G$ of $\mathrm{Diff}_{+}^{1}({\mathbb S}^{2})$
there is a subgroup $H$ of index
at most two such that $\mathrm{Fix}(H) \neq \emptyset$.
Moreover, $\mathrm{Fix}(G) \neq \emptyset$ if and only if $w(\phi,\eta) = 0$
for all $\phi, \eta \in G$.
\end{teo}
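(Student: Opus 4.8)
The plan is to separate the two assertions and to reduce the first one to the existence of a small finite orbit. I would first show that $G$ possesses a finite orbit of cardinality at most two; granting this, the index-$\leq 2$ subgroup is immediate. If the orbit is a single point $p$ then $p \in \mathrm{Fix}(G)$ and we take $H = G$; if the orbit is a pair $\{p,q\}$ then the stabilizer $H = \{g \in G : g(p) = p\}$ has index at most two in $G$, since $G/H$ injects into the symmetric group of $\{p,q\}$, which is $\mathbb{Z}/2$, and moreover $p \in \mathrm{Fix}(H)$, so $\mathrm{Fix}(H) \neq \emptyset$. Thus the entire weight of the first sentence rests on producing the finite orbit.

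To produce the orbit I would build on the fact that every $\phi \in \mathrm{Diff}_{+}^{1}({\mathbb S}^{2})$ has a fixed point (Lefschetz, using $\chi({\mathbb S}^{2}) = 2$), together with Brouwer plane-translation theory to control fixed-point-free behaviour on the complement of invariant sets. Since $G$ is abelian, for each $\phi$ the set $\mathrm{Fix}(\phi)$ is invariant under every $\eta \in G$, so $G$ permutes these fixed-point sets and their essential pieces. The strategy is to extract from this permutation action a canonical finite invariant set and then, once at least three points are marked, to invoke commuting-diffeomorphism fixed-point arguments of Thurston--Nielsen type to force its cardinality down to at most two. The value $2 = \chi({\mathbb S}^{2})$ is exactly what permits a two-point orbit to survive, in contrast to the negative Euler characteristic case where a genuine global fixed point is forced.

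For the characterization I would prove the two implications separately. The direction $\mathrm{Fix}(G) \neq \emptyset \Rightarrow w \equiv 0$ is the easier one: if all elements share a fixed point $p$, then the isotopies and the associated winding/linking data defining $w(\phi,\eta)$ are computed at a common basepoint and can be trivialised, so every $w(\phi,\eta)$ vanishes. The substantive content is the converse $w \equiv 0 \Rightarrow \mathrm{Fix}(G) \neq \emptyset$. Here I would return to the two-point orbit $\{p,q\}$: the only obstruction to upgrading $H$-invariance of $p$ to full $G$-invariance is the presence of an element swapping $p$ and $q$, and I would argue that a nonzero invariant $w$ is forced precisely when such a swap cannot be undone. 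The vanishing of all $w(\phi,\eta)$ should then rule out the \emph{linked involution} configuration exemplified by the M\"obius group generated by two involutions with disjoint fixed-point sets, allowing $\{p,q\}$ to collapse to a genuine common fixed point.

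The main obstacle I anticipate is concentrated in this converse and is twofold. First, one must define $w(\phi,\eta)$ so that it is simultaneously computable from local fixed-point data and rigid enough to obstruct a common fixed point; this requires a careful choice of isotopy representatives and a proof that $w$ is well defined and additive on $G$. Second, and most delicate, is passing from the pairwise vanishing of the invariants to a single fixed point for the \emph{whole} abelian group rather than merely for pairs: this is where I expect to have to combine the Brouwer-theoretic non-existence results with the permutation structure on the finite orbit in order to exclude all fixed-point-free configurations at once.
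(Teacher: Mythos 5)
This statement is not proved in the paper at all: it is quoted from \cite{FHPs} (Franks--Handel--Parwani) and serves as the abelian input that the paper generalizes to nilpotent groups, so your attempt has to be measured against the original proof in \cite{FHPs}, which occupies essentially that entire paper. Your reduction of the first assertion to the existence of a finite orbit of cardinality at most two is correct (and is how both \cite{FHPs} and the present paper organize matters), but it only relocates the difficulty: producing that orbit \emph{is} the theorem. Your sketch of this step --- ``extract a canonical finite invariant set'' and ``invoke commuting-diffeomorphism fixed-point arguments of Thurston--Nielsen type'' --- names the goal rather than supplying an argument. What is actually required is the chain of tools that the present paper adapts to the nilpotent case: a fixed point theorem for commuting $C^{1}$ diffeomorphisms of the plane preserving a compact set (Theorem \ref{teo:plane} in its abelian form), Thurston normal form relative to the fixed point sets of subgroups, realization of the abelian mapping class image by an irreducible model (Lemma \ref{lem:model}), and lifts to universal covers with control of Nielsen classes (Propositions \ref{pro:lif1} and \ref{pro:lif2}). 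Note also that your outline never invokes the $C^{1}$ hypothesis, whereas it is essential: smoothness is what makes elements isotopic to the identity, rel the set, near accumulation points of fixed point sets; a sketch that would apply verbatim to homeomorphisms cannot be complete.

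The one step you declare ``easier'' is justified by an incorrect argument. Fixing a common basepoint does not trivialize $w$: the group $\pi_{1}(\mathrm{Homeo}_{+}({\mathbb S}^{2}),Id) \cong {\mathbb Z}/2{\mathbb Z}$ is generated by the full rotation loop about an axis, which fixes two points, and the natural map $\pi_{1}(\mathrm{Homeo}_{+}({\mathbb S}^{2},p),Id) \to \pi_{1}(\mathrm{Homeo}_{+}({\mathbb S}^{2}),Id)$ is surjective; hence a commutator loop of isotopies rel a common fixed point $p$ can a priori still represent the nontrivial class, and ``computing at a common basepoint'' proves nothing. The genuine argument (reproduced in this paper as Lemma \ref{lem:Handel}, following Handel) passes to the complement of \emph{two} common fixed points, lifts the isotopies to the universal cover of that annulus, shows the lifted commutator is trivial, and then needs Handel's Lemma 1.1 to pass from triviality of the point trajectories to triviality in $\pi_{1}(\mathrm{Homeo}_{+}({\mathbb S}^{2}))$. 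Finally, your treatment of the converse ($w \equiv 0 \Rightarrow \mathrm{Fix}(G) \neq \emptyset$) is circular: saying that ``a nonzero invariant $w$ is forced precisely when such a swap cannot be undone'' restates the theorem rather than proving it, and the passage from pairwise vanishing to a fixed point of the whole group --- which you correctly flag as the delicate point --- is exactly where the inductive machinery above (in particular the plane fixed point theorem applied to lifts over complements of invariant sets) is required and is absent from your proposal.
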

The invariant $w(\phi,\eta)$ is defined for commuting
orientation-preserving homeomorphisms of ${\mathbb S}^{2}$
and belongs to ${\mathbb Z}/2 {\mathbb Z}$
(cf.  subsection \ref{subsec:cond}).

We generalize Theorem \ref{teo:FHP} for nilpotent groups.
\begin{teo}
\label{teo:sphere}
Let $G$ be a nilpotent subgroup of $\mathrm{Diff}_{+}^{1}({\mathbb S}^{2})$.
Then $G$ has  a finite orbit of cardinality at most $2$.
\end{teo}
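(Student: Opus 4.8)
The plan is to first reduce to finitely generated groups and then induct on the nilpotency class, using Theorem \ref{teo:FHP} as the base case and the fixed-point result for $\mathbb{R}^2$ announced in the abstract as the engine of the inductive step.

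For the reduction I would argue by compactness. For a finitely generated subgroup $H \le G$ set $F_H = \{ p \in \mathbb{S}^2 : \#(H \cdot p) \le 2 \}$. Since $H$ is finitely generated, $\mathrm{Hom}(H,\mathbb{Z}/2\mathbb{Z})$ is finite, and one checks that $F_H = \bigcup_{\chi} \mathrm{Fix}(\ker \chi)$, the union being over the finitely many homomorphisms $\chi : H \to \mathbb{Z}/2\mathbb{Z}$ (the trivial one contributing $\mathrm{Fix}(H)$): a point has orbit of size at most two exactly when some index-$\le 2$ subgroup fixes it. As a finite union of fixed-point sets of subgroups, $F_H$ is closed. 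If $H_1 \le H_2$ then $F_{H_2} \subseteq F_{H_1}$, so the family $\{F_H\}$ is downward directed, and each $F_H$ is nonempty by the finitely generated case treated below. Hence $\{F_H\}$ has the finite intersection property, and by compactness of $\mathbb{S}^2$ there is $p \in \bigcap_H F_H$. Such a $p$ has $G$-orbit of cardinality at most $2$, since three distinct orbit points would already occur for some finitely generated $H$. This reduces the theorem to the case $G$ finitely generated.

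Assume now $G$ is finitely generated nilpotent, and induct on the nilpotency class $c$. If $c \le 1$ then $G$ is abelian, and Theorem \ref{teo:FHP} provides an index-$\le 2$ subgroup $H$ with a fixed point $p$; then $H \subseteq G_p$, so $\#(G \cdot p) = [G : G_p] \le 2$. For $c \ge 2$ the center $Z$ of $G$ is nontrivial and finitely generated abelian, so Theorem \ref{teo:FHP} applied to $Z$ yields an index-$\le 2$ subgroup $Z_0 \le Z$ with $X := \mathrm{Fix}(Z_0) \neq \emptyset$. Being central, $Z_0$ is normal in $G$, so $X$ is a closed $G$-invariant set on which $Z_0$ acts trivially; thus $G/Z_0$ acts on $X$, and a point of $X$ with $(G/Z_0)$-orbit of size at most $2$ is precisely a point with $G$-orbit of size at most $2$.

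It remains to produce such a point, and this is where I expect the main difficulty to lie. The induction hypothesis concerns actions on all of $\mathbb{S}^2$, whereas $\mathrm{Fix}(Z_0)$ is an a priori wild closed subset, so the inductive step cannot be applied verbatim and a genuine two-dimensional analysis of $X$ seems unavoidable. The route I would pursue is a descent through the announced $\mathbb{R}^2$ theorem: extract a finite orbit $O$ of $G$ (for instance from the lower-class action on $X$), remove one of its points to obtain an action of the finite-index stabilizer on $\mathbb{S}^2 \setminus \{\mathrm{pt}\} \cong \mathbb{R}^2$ preserving the compact set $O \setminus \{\mathrm{pt}\}$, and invoke the $\mathbb{R}^2$ result to gain further common fixed points, iterating until the orbit is forced down to cardinality at most two. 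A secondary, bookkeeping obstacle is the loss inherent in Theorem \ref{teo:FHP}: passing to $Z_0$ may leave a central involution in $G/Z_0$, so that the effective class drops only once this $\mathbb{Z}/2\mathbb{Z}$ factor is absorbed, and controlling it consistently along the induction is the delicate point.
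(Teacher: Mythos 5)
Your reduction to finitely generated groups and your abelian base case are fine, but the inductive step has two genuine gaps, and you have correctly located the first one yourself: the action of $G/Z_0$ on $X=\mathrm{Fix}(Z_0)$ is an action of an abstract group on a closed subset of the sphere, not an action by diffeomorphisms of ${\mathbb S}^{2}$, so the induction hypothesis gives you nothing there --- in particular you have no source for the finite orbit $O$ that your descent needs. In the paper this is where most of the work lies: one takes the normal subgroups $G_{1}=\langle Z^{(k)}(G),\phi_{1},\hdots,\phi_{p}\rangle$ and $G_{2}=\langle Z^{(k)}(G),\phi_{p+1}\rangle$, passes to the finite-index normal subgroups $H_{j}=\langle \phi^{2}:\phi\in G_{j}\rangle$ (so that the inductive conclusion ``orbit of cardinality $\leq 2$'' becomes ``non-empty fixed point set''), and when $\mathrm{Fix}(H_{1})\cap\mathrm{Fix}(H_{2})=\emptyset$ invokes a Thurston decomposition for nilpotent groups (Proposition \ref{pro:tnf}) together with Lemma \ref{lem:fpr} (itself resting on Theorem \ref{teo:plane}) to force either a global fixed point of a finite-index subgroup or finiteness of $\mathrm{Fix}(H_{1})$. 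Nothing in your sketch replaces this.

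The second gap is that your descent, even granted a finite orbit, does not terminate at cardinality $2$. If $O$ is a finite orbit of minimal cardinality $n\geq 3$ and $p\in O$, then Theorem \ref{teo:plane} applied to the stabilizer $G_{p}$ (acting on ${\mathbb S}^{2}\setminus\{p\}$ and preserving $O\setminus\{p\}$) does give a point $q$ fixed by $G_{p}$; but then $G_{q}\supseteq G_{p}$, so $\sharp (G\cdot q)\leq n$, and minimality forces $\sharp(G\cdot q)=n$ exactly. The iteration merely reproduces orbits of the same size and never decreases $n$; no contradiction with $n\geq 3$ ever appears. What actually forces the bound $2$ in the paper is of a different nature: the image of $G$ in the mapping class group of $({\mathbb S}^{2},O)$ is realized (Lemma \ref{lem:model}) by an irreducible nilpotent group of homeomorphisms whose torsion part is cyclic or dihedral $D_{2^{k}}$ (Remark \ref{rem:pt2}), hence has an invariant set of at most two points; this conclusion is then transported back to $G$ through the fixed point of the action on the dual tree (Proposition \ref{pro:iso}), Lemma \ref{lem:fpr}, and the lifting machinery of Proposition \ref{pro:lif2} combined with Theorem \ref{teo:plane2}. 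Your proposal contains no analogue of this mapping-class-group step, and it is precisely where ``at most $2$'' comes from. (The ``bookkeeping'' issue you mention is also real --- the nilpotency class of $G/Z_0$ need not drop --- and it is the reason the paper runs a double induction on the pairs $(k,p)$ via the groups ${\mathcal G}_{k,p}$ rather than inducting on the class through the center.)
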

Let us remark that the existence of finite orbits for
nilpotent groups of real analytic diffeomorphisms of the sphere was already known
(Ghys \cite{Ghys-identite}).

The proof of Theorem \ref{teo:FHP} relies on the study of lifts of the group
to universal coverings of the complementary of some closed invariant sets and
a fixed point lemma whose generalization for the nilpotent case follows.
\begin{teo}
\label{teo:plane}
Let $G \subset \mathrm{Diff}_{+}^{1}({\mathbb R}^{2})$ be a finitely generated
nilpotent subgroup
that preserves a non-empty compact set. Then $\mathrm{Fix}(G)$ is not empty.
\end{teo}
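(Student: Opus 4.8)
The plan is to induct on the nilpotency class $c$ of $G$, taking as the base case the abelian situation, which is exactly the planar fixed point lemma of Franks, Handel and Parwani underlying Theorem \ref{teo:FHP}. So suppose $G$ has class $c\ge 2$ and set $N=[G,G]$. Then $N$ is normal in $G$, finitely generated (subgroups of finitely generated nilpotent groups are finitely generated), and nilpotent of class at most $c-1$, since $\gamma_i(N)\subseteq \gamma_{i+1}(G)$; moreover $N$ preserves the same compact set. The induction hypothesis therefore gives $\mathrm{Fix}(N)\neq\emptyset$, and I would use this fixed set to carry out the inductive step.

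Write $X=\mathrm{Fix}(N)$. The first point is that $X$ is closed and $G$-invariant: normality of $N$ gives, for $x\in X$, $h\in G$ and $n\in N$, the identity $n(hx)=h(h^{-1}nh)x=hx$. The second, crucial, point is that the induced action of $G$ on $X$ is abelian. Indeed, for $g_1,g_2\in G$ the commutator $[g_1,g_2]$ lies in $N$ and hence fixes $X$ pointwise; since $g_2g_1(x)\in X$ whenever $x\in X$, the relation $g_1g_2=[g_1,g_2]\,g_2g_1$ shows $g_1g_2(x)=g_2g_1(x)$ for all $x\in X$. Thus the action of $G$ on $X$ factors through the finitely generated abelian group $G/N$, and a common fixed point of this action is precisely an element of $\mathrm{Fix}(G)$. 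In this way the class-$c$ problem is reduced to finding a fixed point for a finitely generated abelian action on the closed invariant set $X$.

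The main obstacle is that $X=\mathrm{Fix}(N)$ is only a closed subset of $\mathbb{R}^2$ and need not be homeomorphic to a plane, so the abelian base case cannot be invoked directly on $X$. To handle this I would first secure a compact invariant set for the reduced action: replacing $K$ by its filling $\widehat K$ (the union of $K$ with the bounded components of its complement) keeps a $G$-invariant compact set, and arranging that the fixed points produced at each stage lie in $\widehat K$ makes $X\cap\widehat K$ a non-empty compact $G$-invariant set inside $X$. The genuinely delicate part is then to transport the abelian fixed point lemma to $X$: I would analyse the complementary domains of $X$, which $G/N$ permutes, pass to their prime-end (Carath\'eodory) compactifications to obtain planar pieces on which the abelian lemma or, in degenerate cases, the Brouwer translation theorem applies, and use Cartwright--Littlewood--Bell type results to force the resulting fixed points to lie on $X$ itself. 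Making this relative, closed-set version of the abelian lemma precise, while keeping track of orientation and of the $C^1$ hypothesis where the rotation and index computations of the base case are used, is where the real work lies.
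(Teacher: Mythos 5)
Your reduction to an abelian action on $X=\mathrm{Fix}(N)$ is sound group theory, but everything after it is a genuine gap, and it is exactly where the entire difficulty of the theorem sits. Two concrete problems. First, your inductive hypothesis gives $\mathrm{Fix}(N)\neq\emptyset$ but says nothing about \emph{where} those fixed points are; in particular $X\cap\widehat{K}$ may be empty for all the stated induction knows, so ``arranging that the fixed points produced at each stage lie in $\widehat{K}$'' is not something you get for free --- it requires strengthening the inductive statement itself. The paper does precisely this: it tracks fixed points inside prescribed Nielsen classes (the sets $P(K,f)\cap\mathrm{Fix}(H)$ of Lemma \ref{lem:aux1}, built on Proposition \ref{pro:niel}) and runs three interlocking statements $A_{k,p}$, $B_{k,p}$, $C_{k,p}$ with varying compactness hypotheses, rather than the single statement you induct on. Second, and more fundamentally, the plan to pass to prime-end compactifications of the complementary domains of $X$ and invoke the abelian lemma there cannot work as written: the Franks--Handel--Parwani planar theorem is a theorem about $C^{1}$ diffeomorphisms, and the maps induced on a prime-end compactification are merely homeomorphisms. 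The $C^{1}$ hypothesis is not cosmetic --- the paper stresses that the isotopy-to-the-identity property near accumulation points of fixed-point sets is a consequence of smoothness and fails for homeomorphisms, and the homeomorphism version of the planar fixed point theorem is only known under additional global hypotheses on generators (\cite{FRV:arxiv}). So the cases you defer to Brouwer or to Cartwright--Littlewood--Bell type results are not residual degenerate cases; they are the theorem.

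It is also worth noting how differently the paper closes the induction: it never restricts the action to a fixed-point set of a normal subgroup. Instead it inducts on the upper central series and on the number of generators adjoined to $Z^{(k)}(G)$, keeping the group acting on the plane or on universal covers of complements of invariant compact sets, where the $C^{1}$ hypothesis remains available. The key moves are a Thurston decomposition relative to a pair of invariant compact sets (Proposition \ref{pro:tnf}), realization of the induced nilpotent mapping classes by irreducible models (Lemma \ref{lem:model}, Remark \ref{rem:pt2}), and lifting results (Propositions \ref{pro:lif1} and \ref{pro:lif2}) that manufacture elements of a lifted group whose fixed-point sets are non-empty and \emph{compact}, which is what allows the statements $A_{k,p}$ and $B_{k,p}$ to feed back into $C_{k,p+1}$. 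That machinery is the replacement for your ``delicate part,'' and without some version of it your argument does not get off the ground.
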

A version of Theorem \ref{teo:plane} for groups of orientation-preserving
homeomorphisms of ${\mathbb R}^{2}$
is proved in \cite{FRV:arxiv}.
The $C^{1}$ condition is replaced with a condition of global type
on a set of generators of $G$. The generators are required to be close to the identity map in some sense.
Under such hypotheses existence and localization of fixed points are obtained.

The next theorem is a corollary of Theorem \ref{teo:plane}.
It is useful since some constructions in the paper
naturally satisfy  the hypotheses in Theorem \ref{teo:plane2}.
\begin{teo}
\label{teo:plane2}
Let $G \subset \mathrm{Diff}_{+}^{1}({\mathbb R}^{2})$ be a
nilpotent subgroup. Suppose that there exists $\phi \in G$
such that $\mathrm{Fix}(\phi)$ is a non-empty compact set.
Then $\mathrm{Fix}(G)$ is not empty.
\end{teo}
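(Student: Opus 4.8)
The plan is to deduce the statement from Theorem~\ref{teo:plane} in two stages: first reduce to the case in which $G$ is finitely generated, and then, for finitely generated $G$, manufacture the invariant compact set required by Theorem~\ref{teo:plane} out of the single element $\phi$. The guiding observation is that every fixed set I shall produce is contained in $K := \mathrm{Fix}(\phi)$, which is compact by hypothesis; hence all the fixed sets in play are automatically closed subsets of a fixed compact set.

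For the reduction to the finitely generated case I would argue by compactness. Consider the family $\mathcal{F} = \{\mathrm{Fix}(H) : H \le G \text{ finitely generated}, \ \phi \in H\}$. Each member is a closed subset of the compact set $K$. If I can show that every such $\mathrm{Fix}(H)$ is nonempty, then $\mathcal{F}$ has the finite intersection property: given finitely generated subgroups $H_1, H_2$ containing $\phi$, the subgroup $\langle H_1, H_2\rangle$ is again finitely generated and contains $\phi$, and $\mathrm{Fix}(\langle H_1,H_2\rangle) \subseteq \mathrm{Fix}(H_1)\cap \mathrm{Fix}(H_2)$. Being closed subsets of the compact set $K$, the members of $\mathcal{F}$ then have nonempty total intersection, and this intersection equals $\mathrm{Fix}(G)$ since every $g \in G$ lies in the finitely generated subgroup $\langle \phi, g\rangle$. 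Thus everything reduces to proving $\mathrm{Fix}(H) \neq \emptyset$ for finitely generated nilpotent $H$ containing $\phi$.

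For finitely generated $H$ the key point is that, although $\phi$ need not be central, its centralizer already preserves a compact set: for $g \in C_H(\phi)$ one has $g(\mathrm{Fix}(\phi)) = \mathrm{Fix}(g\phi g^{-1}) = \mathrm{Fix}(\phi) = K$, so $C_H(\phi)$ preserves $K$. Since a subgroup of a finitely generated nilpotent group is again finitely generated and nilpotent, Theorem~\ref{teo:plane} yields $\mathrm{Fix}(C_H(\phi)) \neq \emptyset$. I would then climb from the centralizer up to $H$ along the \emph{normalizer chain}: set $C_0 = C_H(\phi)$ and $C_{i+1} = N_H(C_i)$. Any $g \in N_H(C_i)$ satisfies $g(\mathrm{Fix}(C_i)) = \mathrm{Fix}(gC_ig^{-1}) = \mathrm{Fix}(C_i)$, so $C_{i+1}$ preserves the nonempty compact set $\mathrm{Fix}(C_i)$; applying Theorem~\ref{teo:plane} to $C_{i+1}$ gives $\mathrm{Fix}(C_{i+1}) \neq \emptyset$, and $\mathrm{Fix}(C_{i+1}) \subseteq \mathrm{Fix}(C_i)$ keeps it compact. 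Because $H$ is nilpotent it satisfies the normalizer condition, so $C_i \subsetneq C_{i+1}$ whenever $C_i \neq H$; and because $H$ is finitely generated nilpotent it satisfies the maximal condition on subgroups, so the chain strictly increases and must reach $H$ after finitely many steps, at which stage we obtain $\mathrm{Fix}(H) \neq \emptyset$.

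The main obstacle is precisely the step hidden by the reduction: Theorem~\ref{teo:plane} demands an invariant compact set for the \emph{whole} group, whereas the hypothesis furnishes only one element with compact fixed set. The centralizer supplies such a set for $C_H(\phi)$ alone, so the real work is organizing the ascent from $C_H(\phi)$ to $H$; it is the combination of the normalizer condition with the Noetherian (maximal) property of finitely generated nilpotent groups that makes this ascent finite while keeping a nonempty compact invariant set available at every stage. I would take care to verify that each group appearing in the chain is genuinely finitely generated and nilpotent — properties inherited from $H$ — so that Theorem~\ref{teo:plane} applies at every step.
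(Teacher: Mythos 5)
Your proof is correct, and it shares the paper's overall skeleton: a reduction to finitely generated subgroups containing $\phi$ via the finite intersection property inside the compact set $\mathrm{Fix}(\phi)$ (this half is identical to the paper's argument), followed by an ascent along a chain of subgroups to which Theorem \ref{teo:plane} is applied at each stage. Where you genuinely differ is the ascent mechanism. The paper invokes its Lemma \ref{lem:upt}: it climbs the explicit subnormal chain $\langle Z^{(l)}(G),\phi\rangle$, $l=0,\hdots,k$, built from the upper central series; each group is normalized by the next because consecutive quotients of that series are central, so each group preserves the compact fixed point set of its predecessor, and the chain reaches the whole group after a number of steps bounded by the nilpotency class. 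You instead start at the centralizer $C_H(\phi)$, which visibly preserves $\mathrm{Fix}(\phi)$, and climb the normalizer chain $C_{i+1}=N_H(C_i)$, where invariance of $\mathrm{Fix}(C_i)$ under $C_{i+1}$ holds by definition of the normalizer, and termination follows from two soft group-theoretic facts: nilpotent groups satisfy the normalizer condition, and finitely generated nilpotent groups are polycyclic, hence Noetherian (a fact the paper itself uses elsewhere, with the same reference to Raghunathan). The paper's route buys an explicit chain of predictable length and reuses a lemma already needed inside the induction that proves Theorem \ref{teo:plane}; your route buys independence from the upper central series bookkeeping --- no normal structure has to be exhibited by hand --- at the price of a chain of a priori unknown (but finite) length and a slightly heavier reliance on structure theory of nilpotent groups. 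Both arguments use Theorem \ref{teo:plane} only as a black box, so neither is circular.
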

The following results are applications of Theorems \ref{teo:plane2} and
\ref{teo:plane}.
\begin{cor}
\label{cor:d2}
If $G$ is a nilpotent subgroup of $\mathrm{Diff}_{+}^{1}({\mathbb D}^{2})$
then $\mathrm{Fix}(G)$ is non-empty.
\end{cor}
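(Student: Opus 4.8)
The plan is to deduce Corollary \ref{cor:d2} directly from Theorem \ref{teo:plane2}, so the work consists of producing a single element of $G$ whose fixed-point set is a non-empty compact subset of the interior of the disk, after passing from the disk to the plane. First I would observe that any $\phi \in \mathrm{Diff}_{+}^{1}({\mathbb D}^{2})$ restricts to a diffeomorphism of the boundary circle $\partial {\mathbb D}^{2} \cong {\mathbb S}^{1}$, and this restriction map $\rho: G \to \mathrm{Diff}_{+}^{1}({\mathbb S}^{1})$ is a group homomorphism whose image is a nilpotent subgroup of orientation-preserving circle diffeomorphisms. The classical one-dimensional theory (finitely generated nilpotent, hence in particular the rotation-number homomorphism is well behaved) guarantees a fixed point on the circle; more robustly, a nilpotent subgroup of $\mathrm{Diff}_{+}^{1}({\mathbb S}^{1})$ always has a global fixed point on ${\mathbb S}^{1}$, since the Euler/translation-number quasimorphism is a homomorphism on amenable (in particular nilpotent) groups and a common zero forces a common fixed point on the circle.

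Once a common boundary fixed point $p \in \partial {\mathbb D}^{2}$ is secured, the strategy is to remove the boundary and realize ${\mathbb D}^{2} \setminus \partial {\mathbb D}^{2}$, or rather ${\mathbb D}^{2} \setminus \{p\}$, as ${\mathbb R}^{2}$ in an orientation-preserving, $G$-equivariant-friendly way. Concretely I would choose a diffeomorphism from the open half of the disk near the interior onto the plane; the cleanest route is to use the common boundary fixed point to build a collar and then collapse the remaining boundary arc, producing a model in which $G$ acts by orientation-preserving $C^{1}$ diffeomorphisms of ${\mathbb R}^{2}$. The key point to verify here is that the $C^{1}$ structure survives the identification: this is where I expect the main obstacle to lie, since the naive map sending the open disk to the plane can destroy $C^{1}$ regularity at the boundary, and one must arrange the chart so that each $\phi \in G$ extends to or descends to a genuine $C^{1}$ diffeomorphism of ${\mathbb R}^{2}$ rather than a mere homeomorphism.

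Having transported the action to ${\mathbb R}^{2}$, I would then exhibit an element of $G$ with non-empty compact fixed-point set in order to invoke Theorem \ref{teo:plane2}. The natural candidate is the whole-group compactness coming from the disk: the image of the entire disk is a $G$-invariant compact set, and the interior fixed-point analysis reduces to producing one diffeomorphism $\phi \in G$ whose fixed set meets the interior in a compact set. If $G$ is trivial the statement is vacuous; otherwise I would take any non-identity $\phi$ and argue, using that $\phi$ fixes the boundary point $p$ and preserves the compact disk, that $\mathrm{Fix}(\phi)$ restricted to the relevant planar chart is compact and non-empty—using the Lefschetz/Brouwer fixed point property of the disk to guarantee an interior fixed point and a limiting argument to ensure compactness. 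With such a $\phi$ in hand, Theorem \ref{teo:plane2} immediately yields $\mathrm{Fix}(G) \neq \emptyset$ in the planar model, and pulling back through the chart gives a common fixed point of $G$ in ${\mathbb D}^{2}$.

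Thus the real content is the reduction lemma: \emph{a nilpotent subgroup of $\mathrm{Diff}_{+}^{1}({\mathbb D}^{2})$ can be transported to a nilpotent subgroup of $\mathrm{Diff}_{+}^{1}({\mathbb R}^{2})$ containing an element with non-empty compact fixed-point set.} The one-dimensional boundary input and the Brouwer-type interior input are standard; the delicate step is the $C^{1}$-compatible passage from the disk to the plane, and I would handle it by working with a fixed boundary point and a carefully chosen smooth chart rather than by a blunt conformal identification.
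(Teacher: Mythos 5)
Your proposal contains two genuine gaps, both in steps you treat as routine. First, the claim that a nilpotent subgroup of $\mathrm{Diff}_{+}^{1}({\mathbb S}^{1})$ always has a global fixed point on the circle is false: an irrational rotation generates a cyclic (hence nilpotent) group with no fixed point at all. The rotation number being a homomorphism on amenable groups does not force it to vanish, so your ``common boundary fixed point $p$'' need not exist, and everything built on it (the chart based at $p$, the collar construction) collapses in this case. Second, even granting a boundary fixed point, your claim that an arbitrary non-identity $\phi \in G$ has non-empty compact fixed-point set in the interior chart is also false: all fixed points of $\phi$ could lie on $\partial {\mathbb D}^{2}$ (take the time-one map of a flow whose vector field vanishes only at one boundary point), in which case $\mathrm{Fix}(\phi)$ meets the interior in the empty set; alternatively interior fixed points could accumulate on the boundary and fail to be compact in the open disk. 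Brouwer gives a fixed point in the \emph{closed} disk, not in the interior, and no ``limiting argument'' can repair this for an arbitrary element.

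The paper's proof avoids both problems with a dichotomy you are missing. Since the restriction group ${\mathcal F}$ on $\partial {\mathbb D}^{2}$ is nilpotent, hence amenable, it preserves a probability measure $\mu$ on the circle. The elementary but crucial observation is that if $f \in {\mathcal F}$ has a fixed point, its dynamics on each complementary interval of $\mathrm{Fix}(f)$ is conjugate to a translation, so $\mathrm{supp}(\mu) \subset \mathrm{Fix}(f)$. Therefore either every element of ${\mathcal F}$ has a boundary fixed point, in which case $\mathrm{supp}(\mu) \subset \mathrm{Fix}({\mathcal F})$ and any point of the support is already a global fixed point of $G$ (no planar argument needed at all), or some $\phi \in G$ has $\mathrm{Fix}(\phi) \cap \partial {\mathbb D}^{2} = \emptyset$; in that case $\mathrm{Fix}(\phi)$ is non-empty (Brouwer on the closed disk) and is a compact subset of the open disk, which is what Theorem \ref{teo:plane2} needs after identifying the interior with ${\mathbb R}^{2}$. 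Note also that the $C^{1}$ chart issue you flag as ``the delicate step'' is a non-issue: one conjugates by a fixed smooth diffeomorphism from the open disk to ${\mathbb R}^{2}$, which carries $C^{1}$ diffeomorphisms to $C^{1}$ diffeomorphisms; the real content of the corollary is producing the element with compact non-empty fixed-point set, which is exactly where your argument fails.
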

\begin{teo}
\label{teo:carlit}
(Generalization of Cartwright-Littlewood theorem).
Let $G \subset \mathrm{Diff}_{+}^{1}({\mathbb R}^{2})$ be a nilpotent subgroup
preserving a full continuum $K$ of the plane.
Then $G$ has a global fixed point in $K$.
\end{teo}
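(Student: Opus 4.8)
The plan is to combine the classical Cartwright--Littlewood theorem for a single map with Theorems \ref{teo:plane} and \ref{teo:plane2}, the genuinely new content being to force the common fixed point to lie in $K$ rather than merely in $\mathbb{R}^2$. First I would reduce to the case where $G$ is finitely generated. Since $K$ is compact, each set $\mathrm{Fix}(g)\cap K$ is a compact subset of $K$, and $\mathrm{Fix}(G)\cap K=\bigcap_{g\in G}(\mathrm{Fix}(g)\cap K)$; by the finite intersection property it suffices to show $\mathrm{Fix}(H)\cap K\neq\emptyset$ for every finitely generated subgroup $H$, and each such $H$ is again nilpotent and preserves $K$. So I may assume $G$ is finitely generated nilpotent. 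Observe also that the classical Cartwright--Littlewood theorem, applied to a single orientation-preserving homeomorphism preserving the non-separating continuum $K$, already gives $\mathrm{Fix}(g)\cap K\neq\emptyset$ for every individual $g\in G$; the task is to make these individual fixed-point sets in $K$ intersect.

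Next I would use nilpotency to manufacture a $G$-invariant compact subset of $K$ on which to anchor an induction. If $G$ is non-trivial its center $Z(G)$ is non-trivial, so I may choose $\phi\in Z(G)\setminus\{\mathrm{id}\}$. By the single-map Cartwright--Littlewood theorem the set $L=\mathrm{Fix}(\phi)\cap K$ is non-empty and compact, and since $\phi$ is central $\mathrm{Fix}(\phi)$ is $G$-invariant, so $L$ is a non-empty compact $G$-invariant subset of $K$ on which $\phi$ acts as the identity. This is the inductive engine: the $G$-action on $L$ factors through a proper quotient of $G$, which should permit a descent (an induction on the nilpotency class, with the abelian situation as the base case) while keeping all the relevant invariant sets inside $K$.

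The heart of the argument, and the step I expect to be the main obstacle, is guaranteeing that the common fixed point produced by Theorems \ref{teo:plane} and \ref{teo:plane2} actually lies in $K$: those results locate fixed points in the plane but say nothing about their position relative to the invariant compact set. To control this I would exploit the hypothesis that $K$ is full, so that $\Omega=\mathbb{S}^{2}\setminus K$ is a simply connected $G$-invariant domain, and transfer the problem to $\Omega$ exactly in the spirit of the proof of Theorem \ref{teo:FHP}, passing to the prime-end compactification of the complement of the invariant set. The induced $G$-action on the circle of prime ends of $\Omega$ is nilpotent and orientation preserving, its rotation number is a homomorphism, and one analyses the resulting circle dynamics: a common fixed prime end yields, through its principal continuum, a $G$-invariant subcontinuum of $K$ to feed into Theorem \ref{teo:plane2}, whereas the rotationally minimal case is absorbed by the invariant compact subsets $L$ built above. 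The delicate point is the interface between this purely topological prime-end dynamics and the $C^{1}$ plane theorems, namely converting a fixed prime end into a genuine common fixed point interior to $K$; once this localization is achieved, applying Theorem \ref{teo:plane2} to the $C^{1}$ dynamics on a full subcontinuum of $K$ closes the induction and places the fixed point in $K$.
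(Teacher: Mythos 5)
Your reduction to finitely generated subgroups via the finite intersection property is correct (and is in fact needed, since Theorem \ref{teo:plane} assumes finite generation), and you have correctly identified the crux: Theorems \ref{teo:plane} and \ref{teo:plane2} produce a fixed point somewhere in the plane, with no control on its position relative to $K$. But your proposal never actually closes that gap; it defers it to prime-end dynamics and concedes at the end that the argument works only ``once this localization is achieved''. As written, the prime-end route does not supply the missing step: a common fixed prime end only gives a $G$-invariant subcontinuum of $\partial K$ (its principal set), and feeding that set -- or your central-element sets $L=\mathrm{Fix}(\phi)\cap K$ -- back into Theorem \ref{teo:plane} reproduces exactly the same difficulty, since the fixed point so obtained may again lie outside $K$. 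Similarly, the claim that the $G$-action on $L$ ``factors through a proper quotient'' permitting a descent has no force: fixed points of the quotient action on $L$ are not fixed points of the planar diffeomorphisms themselves, and no mechanism is given for converting one into the other. So the proposal is a correct diagnosis of the difficulty together with an unproven program, not a proof.

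The paper resolves the localization issue with a short contradiction argument that your proposal misses. Suppose $\mathrm{Fix}(G)\cap K=\emptyset$. Since $K$ is connected and $G$-invariant, the connected component $U$ of ${\mathbb R}^{2}\setminus \mathrm{Fix}(G)$ containing $K$ is $G$-invariant. Because $K$ is full, it lifts homeomorphically to the universal covering $\tilde{U}$ of $U$, and there is a unique lift $\tilde{G}$ of $G_{|U}$ preserving one such compact lift $\tilde{K}$ (the same device as in Lemma \ref{lem:fpr}). Now $\tilde{U}$ is a plane, $\tilde{G}$ is nilpotent (finitely generated, after your reduction) and preserves the compact set $\tilde{K}$, so Theorem \ref{teo:plane} gives $\mathrm{Fix}(\tilde{G})\neq\emptyset$; projecting yields a fixed point of $G$ inside $U\subset {\mathbb R}^{2}\setminus\mathrm{Fix}(G)$, a contradiction. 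The point is to apply Theorem \ref{teo:plane} not in the original plane but in a covering of the complement of $\mathrm{Fix}(G)$, so that any fixed point it produces is automatically located where none can exist; this is the localization trick your proposal lacks, and with it neither prime ends nor the central-element induction are needed.
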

Let $G$ be an abelian subgroup of $\mathrm{Diff}_{+}^{1}({\mathbb S}^{2})$.
The invariant $w$ (cf. Theorem \ref{teo:FHP}) defines a symmetric map
$w:G \times G \to {\mathbb Z}/2 {\mathbb Z}$, that is
a morphism of groups in every component and such that
$w_{| H \times H} \equiv 0$ is equivalent to the existence of a global fixed point of $H$
for any subgroup $H$ of $G$ \cite{FHPs}.
In general it is not possible to define an invariant $w$
satisfying the previous properties if $G$ is a nilpotent subgroup of
$\mathrm{Diff}_{+}^{1}({\mathbb S}^{2})$.
Consider $G= \langle 1/z, i z \rangle \subset \mathrm{PGL}(2,{\mathbb C})$ as a counterexample;
it is isomorphic to the dihedral group $D_{4}$ and in particular $G$ is a
non-abelian nilpotent group.
We have $w(1/z,i z) = w(1/z,-z) =1$ since $1/z$ does not have common fixed points
with neither $iz$ nor $-z$. On the other hand
$w(1/z,-z) = w(1/z, (iz) \circ (iz)) = 2 w (1/z,iz)=0$ since $w$ is a morphism of groups
in the second component, providing a contradiction.
Anyway the following parity theorem is an example of a result that
admits a very simple proof in the abelian setting by using the properties of $w$
and it still can be generalized to the nilpotent case.
\begin{teo}
\label{teo:odd}
Let $G \subset \mathrm{Diff}_{+}^{1}({\mathbb S}^{2})$ be a nilpotent subgroup.
Suppose that there exists a finite $G$-invariant set $F$ such that
$\sharp F$ is odd. Then $\mathrm{Fix}(G)$ is not empty.
\end{teo}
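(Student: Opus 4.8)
The plan is to prove the statement by induction on the order $m$ of the image $\rho_{F}(G)$ of $G$ under the permutation representation $\rho_{F}\colon G \to \mathrm{Sym}(F)$. First I reduce to the finitely generated case: every subgroup of $G$ preserves $F$, so each finitely generated $G'\le G$ again satisfies the hypotheses; granting the result for such $G'$, the sets $\mathrm{Fix}(G')$ form a family of closed subsets of the compact space ${\mathbb S}^{2}$ with the finite intersection property (the subgroup generated by finitely many finitely generated subgroups is finitely generated), so $\mathrm{Fix}(G)=\bigcap_{G'}\mathrm{Fix}(G')\neq\emptyset$. Hence I assume $G$ finitely generated, so that every finite index subgroup below is again finitely generated. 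The base case $m=1$ is immediate, since then $G$ fixes $F$ pointwise and $\mathrm{Fix}(G)\supseteq F\neq\emptyset$.

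Suppose $m>1$. If $m$ is even, let $P$ be the normal Sylow $2$-subgroup of $\rho_{F}(G)$ and set $F'=\{x\in F:\sigma(x)=x\ \forall\,\sigma\in P\}$. Since $P$ is a $2$-group, its orbits on $F$ have cardinality a power of $2$, so $\sharp F'\equiv \sharp F\pmod 2$ and $F'$ is a nonempty set of odd cardinality. As $P$ is normal, $F'$ is $G$-invariant, and since $P$ acts trivially on $F'$ the restriction homomorphism gives $|\rho_{F'}(G)|\le m/|P|<m$. Applying the induction hypothesis to $(G,F')$ yields $\mathrm{Fix}(G)\neq\emptyset$.

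There remains the case $m$ odd and $>1$. The finite nilpotent group $N=\rho_{F}(G)$ has a maximal subgroup $M$, automatically normal of odd prime index $p_{0}=[N:M]$. Put $H=\rho_{F}^{-1}(M)\trianglelefteq G$, so $[G:H]=p_{0}$ and $|\rho_{F}(H)|=|M|<m$; since $F$ is $H$-invariant of odd cardinality, the induction hypothesis gives $\mathrm{Fix}(H)\neq\emptyset$. The theorem is thus reduced to the crux: if $H\trianglelefteq G$ has odd prime index $p_{0}$ and $\mathrm{Fix}(H)\neq\emptyset$, then $\mathrm{Fix}(G)\neq\emptyset$. By Theorem \ref{teo:sphere}, $G$ has a finite orbit $E$ with $\sharp E\le 2$; if $\sharp E=1$ we are done, so assume $E=\{a,b\}$. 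The action on $E$ is a homomorphism $G\to\mathrm{Sym}(E)={\mathbb Z}/2{\mathbb Z}$; if $H$ fixes both $a$ and $b$ this map kills $H$, hence factors through $G/H\cong{\mathbb Z}/p_{0}{\mathbb Z}$, which admits only the trivial map to ${\mathbb Z}/2{\mathbb Z}$ because $p_{0}$ is odd. In that subcase $G$ fixes $a$ and $b$ and we are done.

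The main obstacle is the complementary subcase, where $H$ exchanges $a$ and $b$. Then $C:=\mathrm{Fix}(H)$ is a nonempty closed $G$-invariant set disjoint from $E$ on which $G$ acts through $G/H\cong{\mathbb Z}/p_{0}{\mathbb Z}$, the generator $\psi$ satisfying $\psi^{p_{0}}|_{C}=\mathrm{id}$, and I must produce a fixed point of this finite cyclic action. My plan is to pass to the index two stabilizer $G^{+}=\{g\in G:g(a)=a\}$, which fixes $a$ and $b$ and hence acts on the plane ${\mathbb S}^{2}\setminus\{a\}$, together with its normal subgroup $H^{+}=H\cap G^{+}$ of index $p_{0}$ and with $\mathrm{Fix}(H^{+})\supseteq C$; one then exploits the compact invariant configuration to apply the planar fixed point Theorems \ref{teo:plane} and \ref{teo:plane2} to $G^{+}$, and finally controls the residual involution induced by an element $g_{0}$ with $g_{0}(a)=b$ on the resulting fixed set. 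I expect this finite cyclic fixed point problem on a closed invariant set to be the heart of the proof: it is precisely here that orientation preservation and the oddness of $p_{0}$ must be combined to forbid a free action, and the orientation preserving Möbius group $\langle 1/z, iz\rangle$ of the introduction, which is fixed point free yet has only finite orbits of even cardinality, shows that the odd hypothesis cannot be removed.
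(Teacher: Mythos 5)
Your group-theoretic reductions are correct as far as they go: the passage to finitely generated groups, the Sylow $2$-subgroup argument in the even case, the maximal-subgroup reduction in the odd case, and the dichotomy coming from the $2$-orbit supplied by Theorem \ref{teo:sphere} are all sound. But the proof stops exactly where the real difficulty begins: the subcase in which $H$ exchanges the two points $a,b$ is never proved, only planned, and the plan does not work as stated. Applying Theorem \ref{teo:plane} or \ref{teo:plane2} to $G^{+}$ acting on ${\mathbb S}^{2}\setminus\{a\}$ yields only the \emph{existence} of a fixed point of $G^{+}$, which is trivially known anyway (the point $b$ is one); what you actually need is a fixed point of a generator $\psi$ of $G/H$ \emph{located inside} $C=\mathrm{Fix}(H)$, since $\mathrm{Fix}(G)=C\cap\mathrm{Fix}(\psi)$, and nothing in those theorems controls the location of the fixed point or its behaviour under the swapping element $g_{0}$. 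Localizing fixed points relative to an invariant set is precisely what forces the paper to use Nielsen classes, the set $P(C,\phi)$ of Proposition \ref{pro:niel}, and the Thurston decomposition, none of which appear in your argument.

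Moreover, this residual subcase is not a small remainder: it is essentially the whole theorem again. Since $G$ acts on $C$ through $G/H\cong{\mathbb Z}/p_{0}{\mathbb Z}$, every $G$-orbit in $C$ has cardinality $1$ or $p_{0}$; a $1$-point orbit is already a global fixed point, so in the remaining situation you have produced a $G$-invariant orbit of odd cardinality $p_{0}\geq 3$ on which $G$ acts through a cyclic group --- that is, an instance of the statement being proved. Your induction parameter $\sharp\rho_{F}(G)$ does not decrease in the critical case $\rho_{F}(G)\cong{\mathbb Z}/p_{0}{\mathbb Z}$ (where $H=\ker\rho_{F}$), so the induction cannot close the loop. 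The paper attacks exactly this irreducible configuration head-on: Thurston decomposition rel the odd orbit (Proposition \ref{pro:tnf}), a parity argument showing the contracted marked set $\hat{F}$ of an invariant piece is still odd, the irreducible nilpotent model of Lemma \ref{lem:model}, Handel's Lemma \ref{lem:Handel} to show that any central torsion element of the model is an involution whose unique fixed point in $\hat{F}$ is a global fixed point of the model, and finally Proposition \ref{pro:lif2} together with Theorem \ref{teo:plane2} to transport that fixed point back to $G$. An argument of comparable strength is indispensable for your crux; without it, what you have is a correct reduction, not a proof.
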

The previous theorem can be also interpreted as a result on fixed-point-free
groups. Any finite orbit of a fixed-point-free nilpotent subgroup of
$\mathrm{Diff}_{+}^{1}({\mathbb S}^{2})$ has an even number of elements.
We study the properties of these groups in section \ref{section:fpf}.
We say that a finite orbit of $G$ with $p$ elements is a $p$-orbit of $G$.
Consider $2$-orbits ${\mathcal O}_{1}$ and ${\mathcal O}_{2}$ of a
subgroup $G$ of $\mathrm{Homeo}_{+}({\mathbb S}^{2})$,
where $\mathrm{Homeo}_{+}({\mathbb S}^{2})$ is the group of orientation-preserving
homeomorphisms of ${\mathbb S}^{2}$.
We say that ${\mathcal O}_{1}$ and ${\mathcal O}_{2}$ belong to the same class if we have
\[ \phi_{|{\mathcal O}_{1}} \equiv Id \Leftrightarrow \phi_{|{\mathcal O}_{2}} \equiv Id \]
for any $\phi \in G$.
Curiously fixed-point-free nilpotent subgroups of $\mathrm{Diff}_{+}^{1}({\mathbb S}^{2})$
also have few $2$-orbits.
\begin{teo}
\label{teo:scar}
(Scarcity of $2$-orbits).
Let $G$ be a fixed-point-free
nilpotent subgroup of $\mathrm{Diff}_{+}^{1}({\mathbb S}^{2})$.
Then the number of classes of $2$-orbits is either
$1$ or $3$. Moreover, if $G$ is commutative it is equal to $3$.
\end{teo}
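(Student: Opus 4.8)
\emph{Reduction to linear algebra over $\mathbb{F}_2$.} The plan is to recast the statement as a counting problem for characters. To a $2$-orbit $\mathcal{O}=\{p,q\}$ I attach the map $\chi_{\mathcal{O}}:G\to\mathbb{Z}/2\mathbb{Z}$ defined by $\chi_{\mathcal{O}}(\phi)=0$ if $\phi$ fixes both points of $\mathcal{O}$ and $\chi_{\mathcal{O}}(\phi)=1$ if $\phi$ interchanges them; this is a homomorphism, and it is nontrivial, for otherwise every element of $G$ would fix $p$, contradicting that $G$ is fixed-point-free. Two $2$-orbits lie in the same class exactly when $\ker\chi_{\mathcal{O}_1}=\ker\chi_{\mathcal{O}_2}$, i.e. $\chi_{\mathcal{O}_1}=\chi_{\mathcal{O}_2}$. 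Conversely, if $H\le G$ has index two and $p\in\mathrm{Fix}(H)$, then $Gp=\{p,\tau p\}$ for $\tau\notin H$ is a $2$-orbit with $\ker\chi_{Gp}=H$, since $\tau p\neq p$. Thus the number of classes equals the cardinality of $\Sigma\setminus\{0\}$, where $\Sigma\subseteq V:=\mathrm{Hom}(G,\mathbb{Z}/2\mathbb{Z})$ is the set of $\chi$ (including $0$) whose kernel has a fixed point. By Theorem \ref{teo:sphere} the group $G$ has a finite orbit of cardinality at most two, and fixed-point-freeness forces this cardinality to be two, so $\Sigma\neq\{0\}$. It therefore suffices to show that $\Sigma$ is an $\mathbb{F}_2$-subspace of $V$ of dimension $1$ or $2$, and of dimension $2$ when $G$ is abelian.

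\emph{Closure: two classes force a third.} Given distinct nonzero $\chi_1,\chi_2\in\Sigma$, with $2$-orbits $\mathcal{O}_1,\mathcal{O}_2$, the subgroup $L:=\ker\chi_1\cap\ker\chi_2$ fixes the four points of $\mathcal{O}_1\cup\mathcal{O}_2$, and $H_3:=\ker(\chi_1+\chi_2)=L\sqcup gL$, where $g$ interchanges the two points of each $\mathcal{O}_i$. To conclude $\chi_1+\chi_2\in\Sigma$ I must produce a global fixed point of $H_3$. The group $H_3$ preserves the nonempty compact set $\mathrm{Fix}(L)\supseteq\mathcal{O}_1\cup\mathcal{O}_2$ and the invariant pair $\mathcal{O}_1$; passing to the universal cover of the annulus $\mathbb{S}^2\setminus\mathcal{O}_1$ — the lifting device behind Theorem \ref{teo:FHP} — reduces the search to a fixed point of a nilpotent group acting on $\mathbb{R}^2$ while preserving a nonempty compact set, so Theorem \ref{teo:plane} (or Theorem \ref{teo:plane2}) yields $\mathrm{Fix}(H_3)\neq\emptyset$. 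Hence $\Sigma$ is closed under addition, so it is an $\mathbb{F}_2$-subspace, and in particular two distinct classes entail a third.

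\emph{The upper bound, and the main obstacle.} It remains to exclude $\dim_{\mathbb{F}_2}\Sigma\ge 3$. Three independent $\chi_1,\chi_2,\chi_3\in\Sigma$ produce disjoint $2$-orbits $\mathcal{O}_1,\mathcal{O}_2,\mathcal{O}_3$, six points on which $G$ acts through $(\chi_1,\chi_2,\chi_3):G\to(\mathbb{Z}/2\mathbb{Z})^3$; the stabiliser of each point has index two, so the induced permutation group on these six points is the elementary abelian $(\mathbb{Z}/2\mathbb{Z})^3$ realised as three independent transpositions. I expect this to be the crux of the whole argument: such a configuration is incompatible with an orientation-preserving action on $\mathbb{S}^2$, reflecting the fact that $(\mathbb{Z}/2\mathbb{Z})^3$ is not a subgroup of $\mathrm{SO}(3)$ — quantitatively, a Riemann--Hurwitz/Euler-characteristic count for the ramification data of a putative degree-eight branched self-cover of $\mathbb{S}^2$ returns a non-integer quotient characteristic. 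The delicate point is that $G$ may be torsion-free and thus carry no honest involutions, so the impossibility cannot be quoted from the classification of finite rotation groups; it must be extracted from the $C^1$ dynamics, for instance through a fixed-point-index computation for the elements realising the pure swaps on the invariant set $\mathrm{Fix}(L)$.

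\emph{The commutative case.} If $G$ is abelian, Theorem \ref{teo:FHP} identifies $\mathrm{Fix}(H)\neq\emptyset$ with the vanishing of $w$ on $H$, so $\Sigma\setminus\{0\}$ consists precisely of the characters whose kernel is a totally isotropic hyperplane for the symmetric $\mathbb{F}_2$-bilinear form $w$ on $U:=G\otimes\mathbb{F}_2$. Now $w$ is alternating: each cyclic subgroup $\langle\phi\rangle$ has a fixed point by the Lefschetz formula (as $\chi(\mathbb{S}^2)=2$), hence $w(\phi,\phi)=0$; and $w\not\equiv 0$ since $G$ is fixed-point-free. An alternating form has even rank, and the existence of a totally isotropic hyperplane, guaranteed by $\Sigma\neq\{0\}$, forces that rank to be at most $2$, hence exactly $2$. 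A rank-two symplectic $\mathbb{F}_2$-space has exactly three isotropic hyperplanes — the radical extended by each of the three isotropic lines of the symplectic plane — so $\dim_{\mathbb{F}_2}\Sigma=2$ and $G$ has precisely three classes of $2$-orbits.
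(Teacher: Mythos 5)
Your $\mathbb{F}_2$-linear reformulation is a legitimate repackaging of the statement, and your commutative case is correct and complete (it is a clean way of organizing the paper's own use of Theorem \ref{teo:FHP}: the form $w$ is alternating, nonzero, and admits a totally isotropic hyperplane, hence has rank exactly $2$). But the two steps that carry the nilpotent case are not proved. The closure step fails as written. You argue that since $H_3=\ker(\chi_1+\chi_2)$ preserves the compact set $\mathrm{Fix}(L)$ and the pair $\mathcal{O}_1$, lifting to the universal cover of the annulus ${\mathbb S}^2\setminus\mathcal{O}_1$ reduces everything to Theorem \ref{teo:plane}. However $L=\ker\chi_1\cap\ker\chi_2$ is normal in $G$, so $\mathrm{Fix}(L)$ and $\mathcal{O}_1$ are invariant under \emph{all} of $G$; your argument, if valid, would apply verbatim to $G$ itself and prove $\mathrm{Fix}(G)\neq\emptyset$, contradicting the standing hypothesis. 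Concretely, the reduction breaks because the preimage of a compact set under the universal covering of an annulus is never compact (it is invariant under the infinite deck group), and because $H_3$ contains elements swapping the two ends of the annulus, so there is no canonical homomorphic lift and no compact invariant set upstairs: the hypotheses of Theorems \ref{teo:plane} and \ref{teo:plane2} are simply not available. What actually distinguishes $H_3$ from $G$ is invisible to any argument based only on which sets are preserved; the paper captures it with an isotopy invariant. It first shows that the image of $G$ in $\mathrm{MCG}({\mathbb S}^2,F)$ is commutative and irreducible (Lemma \ref{lem:ircon}, via the Thurston decomposition of Proposition \ref{pro:tnf} and Lemma \ref{lem:fpr}), realizes it by Lemma \ref{lem:model}, defines $w_F$, proves $w_F$ vanishes on $H_3\times H_3$ but not on $G\times G$ (Lemma \ref{lem:J}, resting on Handel's Lemma \ref{lem:Handel}), and only then produces a lift with non-empty \emph{compact} fixed-point set (Proposition \ref{pro:lif2}) to which Theorem \ref{teo:plane2} can be applied (Proposition \ref{pro:setup}).

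The upper bound $\dim_{\mathbb{F}_2}\Sigma\leq 2$ you leave explicitly open, and the route you sketch is the one you yourself identify as blocked: since $G$ may be torsion-free, the fact that $G$ acts on the six marked points through $({\mathbb Z}/2{\mathbb Z})^3$ does not produce any finite group acting on ${\mathbb S}^2$, so neither the classification of finite subgroups of $\mathrm{SO}(3)$ nor a Riemann--Hurwitz count for a branched cover can be invoked --- there is no cover and no finite group to count with. In the paper this step is again a consequence of the $w_F$ machinery, not of index or covering arguments: for three $2$-orbits with pairwise distinct classes one shows $J=H_1\cap H_2=H_1\cap H_2\cap H_3$ (Lemma \ref{lem:J}), i.e.\ $H_1\cap H_2\subset H_3$, which is precisely the failure of independence (Corollary \ref{cor:noind3}); combined with the purely combinatorial Lemma \ref{lem:4t3i} this caps the number of classes at three. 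So both gaps are real, and both are filled by the same ingredient your proposal lacks: the mapping-class-group model and the invariant $w_F$, which is the only tool in the paper that separates the subgroups that do have global fixed points from the ambient fixed-point-free group.
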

\section{Technical setup}
Let us introduce some of the tools that we are going to use
throughout the paper.
\subsection{Groups}
The main results are proved
by induction on the class of nilpotency $k$
of $G$.
\begin{defi}
Let $G$ be a group.
We denote by $Z^{(l)}(G)$ the $l^{\mathrm{th}}$ member of the upper central
series of $G$. More precisely we have $Z^{(0)}(G) = \{Id\}$ and
\[ Z^{(k+1)}(G) = \{ \phi \in G : \phi \eta \phi^{-1} \eta^{-1} \in Z^{(k)}(G) \ \forall \eta \in G \}  \]
for $k \geq 0$.
If $G$ is nilpotent then
$c=\min \{ k \in {\mathbb N} \cup \{0\} : Z^{(k)}(G) = G \}$
is the nilpotency class of $G$.
\end{defi}
\begin{defi}
Let $G$ be a nilpotent group.
We say that $G$ is in ${\mathcal G}_{k,p}$ if $G$ is a nilpotent group of
nilpotency class $\leq k+1$ such that $G = \langle Z^{(k)}(G), \alpha_{1}, \hdots, \alpha_{p} \rangle$
for some $\alpha_{1}, \hdots, \alpha_{p} \in G$.
\end{defi}
\begin{rem}
Given a group $G= \langle Z^{(k)}(G), \alpha_{1}, \hdots, \alpha_{p} \rangle$
in ${\mathcal G}_{k,p}$ the group
$\langle Z^{(k)}(G), \alpha_{1}, \hdots, \alpha_{p-1} \rangle$ belongs to ${\mathcal G}_{k,p-1}$.
\end{rem}
\subsection{Nielsen classes}
A frequent trick in this paper is to consider a lift $\tilde{G}$ of
$G \subset \mathrm{Diff}_{+}^{1}({\mathbb S}^{2})$, deducing properties of
$\tilde{G}$ and interpreting them in terms of $G$.
\begin{defi}
Let $G$ be a subgroup of the group $\mathrm{Homeo}_{+}(S)$ of orientation-preserving
homeomorphisms of a manifold $S$.
We say that a subgroup $\tilde{G}$ of $\mathrm{Homeo}_{+}(\tilde{U})$
is a lift of $G$ if
\begin{itemize}
\item $\tilde{U}$ is the universal covering of a $G$-invariant open set $U$ of $S$.
We denote by $\pi: \tilde{U} \to U$ the covering transformation.
We define $G_{|U}$ as
the group  induced by $G$ by restriction to $U$.
\item There exists a unique $\tilde{\phi} \in \tilde{G}$
such that $\phi \circ \pi = \pi \circ \tilde{\phi}$ for any $\phi \in G_{|U}$.
Reciprocally given $\psi \in \tilde{G}$ there exists
$\phi \in G$ such that $\phi \circ \pi = \pi \circ \psi$.
\end{itemize}
\end{defi}
\begin{rem}
Notice that $\tilde{G}$ is not in general the group of all lifts of elements of $G$.
For example the unique lift of $\{ Id_{U} \}$ is $\{ Id_{\tilde{U}} \}$.
The second property implies that the map $\tau: G_{|U} \to \tilde{G}$
defined by $\tau(\phi)= \tilde{\phi}$ is an isomorphism of groups.
It is clear that $\mathrm{Fix}(\tilde{G}) \neq \emptyset$
implies $\mathrm{Fix}(G) \cap U \neq \emptyset$.
\end{rem}
We use Nielsen classes
to analyze the properties of the fixed point sets of the elements of
a lift.
\begin{defi}
Let $\phi \in \mathrm{Homeo}_{+}(S)$. Consider a compact $\phi$-invariant
set $C$. Let $x,y \in \mathrm{Fix}(\phi) \setminus C$. We say that
$x$ is {\it Nielsen equivalent to} $y$ {\it relative to} $C$ if there exists
an arc $\rho$ from $x$ to $y$ such that $\rho$ and $\phi (\rho)$ are homotopic
relative to endpoints in $S \setminus C$.
\end{defi}
\begin{defi}
Let $\phi \in \mathrm{Homeo}_{+}(S)$, a compact $\phi$-invariant
set $C$ and $x \in \mathrm{Fix}(\phi) \setminus C$. Consider the connected
component $N$ of $S \setminus C$ containing $x$ and its universal covering
$\tilde{N}$. We say that $\tilde{\phi}:\tilde{N} \to \tilde{N}$ is a lift
of $\phi$ for the Nielsen class of $x$ rel $C$ if $\mathrm{Fix}(\tilde{\phi})$
contains a lift of $x$.
\end{defi}
Notice that $x$ and $y$ are Nielsen equivalent relative to $C$
if they belong to the same connected component $N$ of
$S \setminus C$ and there exists
some lift $\tilde{\phi}:\tilde{N} \to \tilde{N}$ of
$\phi$ for both the Nielsen classes of $x$ and $y$ rel $C$.
Indeed in such a case the lifts of $\phi$ for the Nielsen class of $x$ rel $C$
coincide with the lifts of $\phi$ for the Nielsen class of $y$ rel $C$.

The next definitions are used in Proposition \ref{pro:lif1} (which is
indeed the Proposition 4.2 of \cite{FHPs}). They are required to
relate properties of the images of a subgroup $G \subset \mathrm{Diff}_{+}^{1}({\mathbb S}^{2})$
in the group of classes of
homeomorphisms modulo isotopy (relative to some compact $G$-invariant set)
with properties of $G$.
\begin{defi}
\label{def:det}
Let $\eta_{t}$ be an isotopy  rel $C$ from $\eta_{0}=\phi$ to $\eta_{1}$.
Consider $x \in \mathrm{Fix}(\phi) \setminus C$ and a lift $\tilde{\phi}$
of $\phi$ for the Nielsen class of
$x$ rel $C$. There exists a unique continuous lift $\tilde{\eta}_{t}$ of $\eta_{t}$
such that $\tilde{\eta}_{0}=\tilde{\phi}$.
{\it The Nielsen class relative to $C$ determined by the Nielsen class of} $x$ {\it and} $\eta_{t}$ is by
definition the Nielsen class determined by $\tilde{\eta}_{t}$, i.e.
the elements of the Nielsen class are the projections of elements of
$\mathrm{Fix}(\tilde{\eta}_{t})$.
\end{defi}
\begin{defi}
We say that the Nielsen class of $x \in \mathrm{Fix}(\phi) \setminus C$ rel $C$
is {\it compactly covered} if there exists some lift $\tilde{\phi}:\tilde{N} \to \tilde{N}$
(and hence every lift)
of $\phi$ for the Nielsen class of $x$ such that
$\mathrm{Fix}(\tilde{\phi})$ is compact.
\end{defi}
\begin{defi}
Suppose that $P$ is an isolated puncture of $N$.
We say that the Nielsen class of $x \in \mathrm{Fix}(\phi) \setminus C$ rel $C$
{\it peripherally contains} $P$ if
there is a properly immersed
ray $\rho \subset N$ based at $x$ that converges to $P$ such that
$\phi (\rho)$ is properly homotopic to $\rho$ relative
to $x$ in $N$.
\end{defi}
\begin{defi}
Let $\phi \in \mathrm{Homeo}_{+}({\mathbb R}^{2})$. Suppose that $C \subset {\mathbb R}^{2}$
is a non-empty compact $\phi$-invariant set with $C \cap \mathrm{Fix}( \phi ) = \emptyset$.
We define $P(C,\phi)$ as the union of the Nielsen classes of $\mathrm{Fix}(\phi)$ rel $C$
that do not peripherally contain $\infty$.
\end{defi}
Periodic points of orientation-preserving homeomorphisms of ${\mathbb R}^{2}$
turn around fixed points \cite{Gamba} \cite{Calvez:tourne}. The next
result is a generalization of such phenomenon for compact invariant sets
with no fixed points.
\begin{pro}
\label{pro:niel}
\cite{FHPs}[Proposition 5.3]
Let $\phi \in \mathrm{Homeo}_{+}({\mathbb R}^{2})$. Suppose that $C \subset {\mathbb R}^{2}$
is a non-empty compact $\phi$-invariant set with $C \cap \mathrm{Fix}( \phi ) = \emptyset$.
Then $P(C, \phi)$ is a non-empty compact set.
\end{pro}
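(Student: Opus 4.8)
The plan is to pass to the sphere $\mathbb{S}^{2} = \mathbb{R}^{2} \cup \{\infty\}$, extending $\phi$ to $\hat{\phi} \in \mathrm{Homeo}_{+}(\mathbb{S}^{2})$ by $\hat{\phi}(\infty)=\infty$; since $\mathrm{Homeo}_{+}(\mathbb{S}^{2})$ is arcwise connected, $\hat{\phi}$ is isotopic to the identity and $\mathrm{Fix}(\hat{\phi}) = \mathrm{Fix}(\phi) \cup \{\infty\}$ is compact in $\mathbb{S}^{2}$. I record two reductions. First, $\mathrm{Fix}(\phi)$ is closed and disjoint from the compact set $C$, so $d(\mathrm{Fix}(\phi),C)>0$; hence no point of $P(C,\phi)$ accumulates on $C$, and proving compactness of $P(C,\phi)$ reduces to showing that it is closed in $\mathbb{R}^{2}$ and that $\infty \notin \overline{P(C,\phi)}$ in $\mathbb{S}^{2}$. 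Second, because $\hat{\phi}$ fixes $\infty$ it preserves the unique unbounded component $N_{\infty}$ of $\mathbb{R}^{2} \setminus C$, so the filled set $\hat{C} := \mathbb{R}^{2} \setminus N_{\infty}$ is a compact, full, $\phi$-invariant set containing $C$, and $\hat{C} \setminus C$ is the union of the bounded complementary components of $C$.

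For non-emptiness I would exploit precisely the phenomenon that this proposition generalizes: a compact invariant set with no fixed points must \emph{turn around} a fixed point (cf. \cite{Gamba}, \cite{Calvez:tourne}). Since $\phi$ carries the non-empty compact invariant set $C$, Brouwer's plane translation theorem already forbids $\phi$ from being fixed-point-free (a fixed-point-free element of $\mathrm{Homeo}_{+}(\mathbb{R}^{2})$ is conjugate to a translation and has no compact invariant set). A rotation-number/linking argument for the prime ends of $N_{\infty}$, or equivalently for the action on the universal cover of $N_{\infty}$, should then produce a fixed point $x$ that is non-trivially linked with $C$. Such linking is incompatible with the Nielsen class of $x$ peripherally containing $\infty$: an invariant ray $\rho$ from $x$ to $\infty$ would give a free path to infinity along which $C$ could not wind, contradicting non-triviality of the rotation around $C$. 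Hence $x \in P(C,\phi)$. The cleanest instance is a fixed point lying in a \emph{bounded} component $N$ of $\mathbb{R}^{2} \setminus C$ (that is, in $\hat{C} \setminus C$): for such $N$ the point $\infty$ is not even a puncture, so the class automatically fails to peripherally contain $\infty$; in the case where $\hat{C}$ is connected this is exactly the Cartwright--Littlewood situation.

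Closedness of $P(C,\phi)$ I would obtain from local constancy of the peripheral-containment condition: two fixed points that are close enough lie in the same component $N$ and are joined by a short arc whose $\phi$-image is homotopic to it rel endpoints, so they are Nielsen equivalent rel $C$ and share the peripheral status; together with $\mathrm{Fix}(\phi)$ being closed and bounded away from $C$, this yields closedness. The remaining point, $\infty \notin \overline{P(C,\phi)}$, asserts that every fixed point far enough out lies in a class that \emph{does} peripherally contain $\infty$. Here I would take a large round disk $D \supset \hat{C}$ and show that a fixed point $x$ outside $D$ admits an essentially radial ray $\rho \subset N_{\infty}$ to $\infty$ missing $\hat{C}$; since $\phi$ keeps $\hat{C}$ inside $D$ and fixes $x$ and $\infty$, the ray $\phi(\rho)$ is a second ray to $\infty$ in $N_{\infty}$, and the peripheral theory near the puncture $\infty$ should force it to be properly homotopic to $\rho$ rel $x$, so that the class of $x$ peripherally contains $\infty$ and $x \notin P(C,\phi)$.

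The main obstacle is the non-emptiness step: converting the qualitative assertion ``$C$ turns around a fixed point'' into the rigorous production of a fixed point with a \emph{trapped} Nielsen class. In the classical case one has a single periodic orbit and an honest linking number; for an arbitrary compact invariant set without fixed points the rotation must be encoded through the prime ends of $N_{\infty}$ or through equivariant Brouwer theory on the universal cover, and one must prove this invariant is non-zero and forces an enclosed fixed point. A secondary, but still non-trivial, difficulty is the control at the puncture $\infty$ needed for boundedness, since no smallness or regularity of $\phi$ at infinity is assumed; by comparison the closedness of $P(C,\phi)$ is routine once local constancy is in place.
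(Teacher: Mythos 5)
You should first note what you are being compared against: the paper does \emph{not} prove this proposition at all --- it is imported verbatim as Proposition 5.3 of \cite{FHPs}, and the paper's ``proof'' is the citation. So your attempt has to stand on its own as a complete argument, and it does not. Your closedness step (local constancy of Nielsen classes: two nearby fixed points are joined by a short arc whose image lies in a common simply connected set disjoint from $C$, hence the classes, and their peripheral status, are locally constant on $\mathrm{Fix}(\phi)$) is correct and routine, as you say. But both hard steps are left as hopes. For non-emptiness you acknowledge this yourself; note also that your ``cleanest instance'' fallback --- a fixed point in a bounded complementary component, i.e.\ Cartwright--Littlewood --- is unavailable in general, because $\phi$ may permute the bounded components of ${\mathbb R}^{2} \setminus C$ and leave no fixed point in any of them (take $\phi$ the rotation by $\pi$ about the origin and $C$ the union of two circles centered at $\pm 2$: the only finite fixed point lies in the unbounded component). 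A minor but real error in the same step: Brouwer's theorem does not make a fixed-point-free element of $\mathrm{Homeo}_{+}({\mathbb R}^{2})$ conjugate to a translation; the correct statement you need is that a non-wandering point (e.g.\ any point of a compact invariant set) forces a fixed point.

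The more serious problem is boundedness, where your argument is circular. You take a far-away fixed point $x$, a radial ray $\rho$ missing $\hat{C}$, and assert that ``the peripheral theory near the puncture $\infty$ should force'' $\phi(\rho)$ to be properly homotopic to $\rho$ rel $x$. That proper homotopy is, by definition, exactly the assertion that the class of $x$ peripherally contains $\infty$ --- it is the thing to be proven, and it is not automatic. When $C$ is disconnected, $\pi_{1}$ of the unbounded component $N_{\infty}$ is free of rank equal to the number of filled components of $C$, while the peripheral subgroup at $\infty$ is generated by a single loop enclosing all of $C$; the obstruction class $[\rho^{-1}\cdot \phi(\rho)]$, taken modulo this peripheral subgroup, can be non-trivial no matter how far $x$ is from $C$ (a point-push of a distant fixed point around a loop encircling just one component of $C$ produces precisely such an $x$). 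So distance from $C$ alone proves nothing; what must be shown is that non-peripheral classes cannot occur along a sequence of fixed points tending to $\infty$, which is a genuine finiteness/properness statement and is essentially the content of the proposition. Closing either gap requires the actual machinery of \cite{FHPs} (or an equivalent: equivariant Brouwer theory, hyperbolic structures on $N_{\infty}$ and geodesic representatives, or prime-end rotation numbers made quantitative), none of which is supplied here.
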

\subsection{Realization of subgroups of the mapping class group}
An idea in fixed point theory is that irreducible models of elements or
groups contain the minimum possible number of fixed points for elements in
the same isotopy class. Hence it is natural to study the image $[G]$ of a group $G$
in certain mapping class groups. Anyway if we replace blindly any element of
the group $G$ with a simpler one in its isotopy class we lost the algebraic structure;
the new group is in general not nilpotent even if $G$ was.
We need to realize $[G]$ as a nilpotent group of orientation-preserving homeomorphisms.
\begin{defi}
\label{def:irr}
Consider a compact surface $S$ of negative Euler characteristic and with
finitely many punctures. We say that a subgroup $A$ of the mapping class
group of $S$ is {\it irreducible} if every element of $A$ is either
pseudo-Anosov or of finite order.
\end{defi}
\begin{lem}
\label{lem:model}
Let $S$ be a compact surface $S$ of negative Euler characteristic and
with finitely many punctures. Let $A$ be an irreducible nilpotent subgroup
of the mapping class group of $S$. Then there exists a homomorphism
$\sigma: A \to \mathrm{Homeo}_{+}(S)$ such that
$\sigma(\theta)$ is pseudo-Anosov or periodic and
is isotopic to $\theta$ for any
$\theta \in A$. Moreover, if $A$ contains a pseudo-Anosov class then
$\sigma(A)$ is generated by the finite group of torsion elements of $\sigma(A)$
and a pseudo-Anosov homeomorphism.
In particular $A$ and $\sigma(A)$ are virtually cyclic.
\end{lem}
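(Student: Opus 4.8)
The plan is to split according to Nielsen--Thurston theory into the two cases allowed by irreducibility: either every element of $A$ has finite order, or $A$ contains a pseudo-Anosov class. I would first dispose of the torsion case. If $A$ has no pseudo-Anosov element, then by irreducibility every element of $A$ has finite order. Since the mapping class group of a finite-type surface is virtually torsion-free (it has a finite-index torsion-free subgroup, e.g. the kernel of the action on $H_{1}(S;\mathbb{Z}/3\mathbb{Z})$), any torsion subgroup meets this subgroup trivially and hence injects into a finite quotient; thus $A$ is finite. A finite group is trivially virtually cyclic, and the Nielsen realization theorem of Kerckhoff produces a faithful action of $A$ by isometries of a hyperbolic structure on $S$, i.e. a homomorphism $\sigma:A\to\mathrm{Homeo}_{+}(S)$ with each $\sigma(\theta)$ a periodic homeomorphism isotopic to $\theta$. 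This settles the case with no pseudo-Anosov class.

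Now suppose $A$ contains a pseudo-Anosov class $\phi$. To obtain the algebraic conclusion I would invoke the Tits alternative for mapping class groups (Ivanov, McCarthy): every subgroup either contains a nonabelian free group or is virtually abelian. As $A$ is nilpotent it contains no free group of rank two, so $A$ is virtually abelian; let $A_{0}$ be a finite-index abelian subgroup. A suitable power $\phi^{m}$ lies in $A_{0}$ and is again pseudo-Anosov, so $A_{0}$ is contained in the centralizer of $\phi^{m}$, which by McCarthy's theorem on centralizers of pseudo-Anosov classes is virtually infinite cyclic. Hence $A_{0}$, and therefore $A$, is virtually cyclic. In particular $A$ is finitely generated, its torsion elements form a finite normal subgroup $T$ (recall that in a nilpotent group the torsion elements form a subgroup), and $A/T\cong\mathbb{Z}$ generated by the image of $\phi$.

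It remains to realize $A$ concretely with the stated structure. The pseudo-Anosov $\phi$ carries a pair of transverse invariant measured foliations $\{[\mathcal{F}^{s}],[\mathcal{F}^{u}]\}$, equivalently a half-translation structure $q$ on $S$ for which $\phi$ is affine with hyperbolic derivative. I would first check that every element of $A$ preserves this unordered pair: for $t\in T$ the class $t\phi t^{-1}$ is pseudo-Anosov and, since $t\in\ker(A\to\mathbb{Z})$, it differs from $\phi$ by a torsion element and hence shares a power with $\phi$; two pseudo-Anosov classes with a common power have the same foliations, so $t$ fixes $\{[\mathcal{F}^{s}],[\mathcal{F}^{u}]\}$, and this propagates to all of $A=\langle T,\phi\rangle$. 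By Veech's description, the stabilizer in the mapping class group of such a foliation pair is realized by the group $\mathrm{Aff}^{+}(S,q)$ of affine automorphisms of $(S,q)$, and the forgetful map $\mathrm{Aff}^{+}(S,q)\to\mathrm{MCG}(S)$ is injective because a nontrivial affine automorphism cannot be isotopic to the identity when $\chi(S)<0$. This yields the homomorphism $\sigma:A\to\mathrm{Aff}^{+}(S,q)\subset\mathrm{Homeo}_{+}(S)$ realizing each class, and it is injective since composing with the class map recovers the inclusion $A\hookrightarrow\mathrm{MCG}(S)$. The derivative homomorphism $D:\mathrm{Aff}^{+}(S,q)\to\mathrm{PSL}(2,\mathbb{R})$ sends $\sigma(\phi)$ to a hyperbolic matrix (so $\sigma(\phi)$ is pseudo-Anosov) and has finite kernel equal to the automorphism group of $(S,q)$; thus $\sigma(T)$ consists of periodic homeomorphisms and $\sigma(A)=\langle\sigma(T),\sigma(\phi)\rangle$ is generated by its finite torsion subgroup and a single pseudo-Anosov homeomorphism, as required.

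I expect the main obstacle to be this last step: upgrading the element-by-element choice of Nielsen--Thurston representatives to a single homomorphism $\sigma$. The naive choice of representatives does not respect composition, so the argument must route through the affine automorphism group of the flat structure $q$ attached to $\phi$; the key points to verify carefully are that the \emph{entire} group $A$ (not merely a finite-index subgroup) stabilizes the foliation pair and therefore lifts to $\mathrm{Aff}^{+}(S,q)$, and that this affine realization simultaneously makes the torsion part act by isometries and the infinite-order part act as pseudo-Anosov maps.
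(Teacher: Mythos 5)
Your proposal is correct in its essentials, but it takes a genuinely different route from the paper's proof. The paper first quotes Theorems A and B of \cite{BLM} (solvable subgroups of mapping class groups are virtually abelian and finitely generated), and then splits on whether the \emph{center} of $A$ contains a pseudo-Anosov class: if not, the center is a torsion group, hence finite, and $A$ itself is finite because a finitely generated nilpotent group with finite center is finite \cite{Baum}; Kerckhoff's realization theorem \cite{Kerck} then applies, exactly as in your torsion case. If a central pseudo-Anosov class $[f_{0}]$ does exist, the paper observes that $A$ lies in its centralizer and quotes the realization-plus-structure statement wholesale from Lemma 2.10 of \cite{FHPs} (the centralizer of a pseudo-Anosov class is generated by its finite subgroup of torsion classes and a pseudo-Anosov class, and is realizable). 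You replace both pillars: finiteness in the torsion case is deduced from virtual torsion-freeness of the mapping class group (which has the small advantage of not requiring finite generation of $A$), and in the pseudo-Anosov case you essentially reprove Lemma 2.10 of \cite{FHPs} from scratch, via the Tits alternative, McCarthy's centralizer theorem, and the realization of the stabilizer of the invariant foliation pair inside $\mathrm{Aff}^{+}(S,q)$. What the paper's route buys is brevity; what yours buys is self-containedness, at the price of heavier Teichm\"{u}ller-theoretic input (that every mapping class preserving the unordered pair of projective foliations of a pseudo-Anosov admits an affine representative, and that $\mathrm{Aff}^{+}(S,q)\to\mathrm{MCG}(S)$ is injective when $\chi(S)<0$).

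Two points in your write-up need small repairs. First, the image of your chosen $\phi$ in $A/T\cong{\mathbb Z}$ need not be a generator, so $A=\langle T,\phi\rangle$ can fail as stated; replace $\phi$ by any element of $A$ mapping to a generator of $A/T$ (such an element has infinite order, hence is pseudo-Anosov by irreducibility), and run your construction with it --- this is needed for the final assertion that $\sigma(A)$ is generated by its torsion subgroup and one pseudo-Anosov homeomorphism. Note that the foliation-pair stabilization itself does not depend on this generation claim: for \emph{every} $g\in A$ the commutator $g\phi g^{-1}\phi^{-1}$ lies in $T$ because $A/T$ is abelian, so your argument applies directly to all of $A$. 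Second, the assertion that two pseudo-Anosov classes differing by a torsion element share a common power deserves a line of proof: since $T$ is finite and normal, $C_{A}(T)$ has finite index in $A$, so $\phi^{N}\in C_{A}(T)$ for some $N\geq 1$, and then $(t\phi t^{-1})^{N}=t\phi^{N}t^{-1}=\phi^{N}$. With these patches, and the routine check (which you flag yourself) that an affine representative of an infinite-order class must have hyperbolic derivative, the argument is complete.
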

\begin{proof}
The group $A$ is solvable. Hence it is virtually abelian and finitely generated
(Theorems A and B of \cite{BLM}). If there is no pseudo-Anosov class in the
center of $A$ then $A$ is finite (every finitely generated nilpotent group with
finite center is finite, cf. \cite{Baum}[Lemma 0.1, page 3]).
In such a case the group $A$ can be realized as a subgroup of isometries of
a Riemannian metric defined in $S$, this is a consequence of the solution of the
Nielsen's realization problem by Kerchoff \cite{Kerck}.
The realization of the centralizer of a pseudo-Anosov mapping class is solved in
Lemma 2.10 of \cite{FHPs}.
Indeed the centralizer of a pseudo-Anosov class $[f_{0}]$ is generated by its subgroup of
finite order elements and a pseudo-Anosov class $[f_{1}]$.
Therefore the group generated by a pseudo-Anosov class $[f_{0}] \in Z^{(1)}(A)$
is a finite index normal subgroup of $A$.
\end{proof}
\begin{rem}
\label{rem:pt2}
In this paper we always apply Lemma \ref{lem:model} to
finitely punctured spheres.
In such a case we can describe
the nature of the
subgroup of finite order elements of $\sigma (A)$.
The list of finite groups of orientation-preserving homeomorphisms
defined in the sphere is well known.
It coincides up to conjugacy with the list of finite groups of
orientation-preserving isometries.
The list is composed by cyclic groups $C_{n}$, dihedral groups $D_{n}$
with $2n$ elements,
$A_{4}$, $S_{4}$ and $S_{5}$ (cf. \cite[p. 40]{Shurman}).
The unique nilpotent groups in that list are the cyclic groups and
the dihedral groups $D_{2^{n}}$ for $n \in {\mathbb N}$.
Examples of realizations of the groups $D_{2^{n}}$
are presented in section \ref{subsec:cond}.

Notice that if $\sigma (A)$ has non-trivial finite order elements so
does the center of $\sigma (A)$. Since finite order elements have exactly two
fixed points, we deduce that $\sigma (A)$ has either a finite orbit of
$2$ elements or $2$ fixed points. Moreover
if $\sigma (A)$ has a global fixed point
then it has a second one. If $\sigma (A)$ has no torsion elements then
it is generated by a pseudo-Anosov element
and it has at least $2$ fixed points.
Indeed the Lefschetz index of a fixed point of pseudo-Anosov homeomorphism
is at most $1$.
%
%
\end{rem}
\subsection{Isotopies and fixed points}
The next results are key ingredients to translate results for simple models
(provided by Lemma \ref{lem:model}) to the initial nilpotent subgroup of
$\mathrm{Diff}_{+}^{1}({\mathbb S}^{2})$.
\begin{pro}
\label{pro:lif1}
\cite{FHPs}[Proposition 4.2]
Let $\theta \in \mathrm{Homeo}_{+}(S)$ where $S$ is a compact, maybe finitely punctured, surface.
Consider a compact $\theta$-invariant set $C$. Let $M_{1}$ be a $\theta$-invariant essential
connected subsurface of $S \setminus C$
of finite type and negative Euler characteristic.
Suppose that $\theta_{|M_{1}}$ is either pseudo-Anosov relative to the puncture set or periodic non-trivial.
Let $f_{t}:S \to S$ be an isotopy rel $C$ from $f_{0}=\theta$ to $f_{1}=\phi$.
Suppose that there exists a point $z \in \mathrm{Fix}(\theta)$ in the interior of $M_{1}$.
Consider the $\theta$-Nielsen class $\mu$ of $z$ rel $C$.
Let $\nu$ be the $\phi$-Nielsen class rel $C$ determined by $\mu$ and $f_{t}$.
Then $\nu$ is a non-trivial, compactly covered Nielsen class that does not peripherally contain
any punctures.
\end{pro}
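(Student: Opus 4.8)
The plan is to pass to the universal covering of the component $N$ of $S \setminus C$ containing $z$ and to establish the three assertions --- non-triviality, compact coverage, and absence of peripheral punctures --- as properties that are created by $\theta_{|M_1}$ and then transported along the isotopy. Since $M_1$ is an essential connected $\theta$-invariant subsurface of $S \setminus C$ containing $z$ in its interior, it sits inside $N$, and I would fix the lift $\tilde{\theta}$ of $\theta_{|N}$ to $\tilde{N}$ determined by a chosen lift $\tilde{z}$ of $z$; this is exactly the lift for the Nielsen class $\mu$. The isotopy $f_t$, being rel $C$, restricts to $N$ and lifts uniquely to an isotopy $\tilde{f}_t$ of $\tilde{N}$ with $\tilde{f}_0 = \tilde{\theta}$, and I set $\tilde{\phi} = \tilde{f}_1$. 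By Definition \ref{def:det} the class $\nu$ is precisely the projection of $\mathrm{Fix}(\tilde{\phi})$, so everything reduces to statements about $\mathrm{Fix}(\tilde{\phi})$ inside $\tilde{N}$.

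First I would establish non-triviality through fixed point index. Nielsen--Thurston theory guarantees that when $\theta_{|M_1}$ is pseudo-Anosov relative to the puncture set or periodic and non-trivial, the fixed point $z$ belongs to a Nielsen class of nonzero fixed point index. Lifting, $\tilde{z} \in \mathrm{Fix}(\tilde{\theta})$ carries a nonzero local index, and the lifted isotopy $\tilde{f}_t$ furnishes a homotopy $\tilde{\theta} \simeq \tilde{\phi}$ rel the lift of $C$. Provided no fixed point of the family escapes to the circle at infinity, the index of the class is a homotopy invariant, so $\mathrm{Fix}(\tilde{\phi})$ inherits the same nonzero index and is therefore non-empty, giving $\nu \neq \emptyset$.

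Next I would treat compact coverage and the peripheral-puncture statement together, since both are read off from the dynamics of the extension of $\tilde{\theta}$ to the boundary circle $\partial \tilde{N}$. Because $N$ has negative Euler characteristic I may regard $\tilde{N}$ as a convex subset of the hyperbolic plane with a well-defined circle at infinity on which $\tilde{\theta}$ extends, the punctures of $N$ corresponding to a distinguished family of boundary points. When $\theta_{|M_1}$ is pseudo-Anosov, the invariant singular foliations give a product structure in which the lift behaves like a North--South map whose boundary fixed points are prong directions rather than puncture directions; when $\theta_{|M_1}$ is periodic and non-trivial, the extension is conjugate to a non-trivial finite-order rotation. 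In both cases the interior fixed set stays in a compact part of $\tilde{N}$ (so $\mu$ is compactly covered) and no invariant ray issuing from a fixed point converges to a puncture (so $\mu$ peripherally contains no puncture). Since these are properties of the Nielsen class and the continuous extension of $\tilde{f}_t$ to $\partial \tilde{N}$ fixes the boundary data arising from $C$, they transport from $\mu$ to $\nu$.

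The hard part will be the control at infinity: making rigorous that along the isotopy no fixed point of $\tilde{f}_t$ drifts out to $\partial \tilde{N}$ or accumulates onto a puncture-lift, so that both the index computation of the first step and the two qualitative properties of the last step genuinely pass from $\theta$ to $\phi$. This requires a careful analysis of the extension of $\tilde{f}_t$ to the circle at infinity together with the prong structure of the pseudo-Anosov foliations (or the finite-order model in the periodic case), and it is exactly here that the fixed-point-at-the-ends machinery underlying Proposition \ref{pro:niel} certifies that an interior Nielsen class free of peripheral punctures cannot acquire one under an isotopy rel $C$.
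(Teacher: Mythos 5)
First, a point of comparison: the paper itself gives no proof of this proposition --- it is imported verbatim from \cite{FHPs} as Proposition 4.2 --- so your proposal has to stand entirely on its own. Its skeleton (lift to the universal cover $\tilde{N}$, fix $\tilde{\theta}$ by a lift $\tilde{z}$, lift the isotopy to get $\tilde{\phi}$, identify $\nu$ with the projection of $\mathrm{Fix}(\tilde{\phi})$, and exploit the pseudo-Anosov/periodic structure of $\theta_{|M_{1}}$) is indeed the right frame and matches the Franks--Handel--Parwani argument in spirit. But the step you yourself flag as ``the hard part'' is not a technical remainder: it \emph{is} the proposition, and your sketch both leaves it unproved and frames it by the wrong mechanism. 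You propose to show that ``no fixed point of $\tilde{f}_{t}$ drifts out to $\partial \tilde{N}$'' along the isotopy; however, fixed points of $\tilde{f}_{t}$ need not persist in any sense --- an isotopy rel $C$ can create and destroy them freely --- so no pointwise tracking argument of this kind can be made to work. Likewise the appeal to Proposition \ref{pro:niel} is misdirected: that statement concerns the set $P(C,\phi)$ attached to a compact invariant set disjoint from $\mathrm{Fix}(\phi)$, and it is itself an application of the machinery you are missing, not a substitute for it.

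The concrete missing ingredients are the following two lemmas, and equality of boundary extensions alone (which you do gesture at) is not enough. (i) Equip $N$ with a complete finite-area hyperbolic metric; then the lift of $\phi \circ \theta^{-1}$ obtained by lifting the isotopy rel $C$ starting at the identity moves every point a \emph{uniformly bounded} hyperbolic distance $K$ (the bounded-distance, or global shadowing, lemma). Hence $d(\tilde{\phi}(x),\tilde{\theta}(x)) \leq K$ for all $x$, and in particular $\tilde{\phi}$ and $\tilde{\theta}$ have the same extension to the circle at infinity. (ii) For a lift $\tilde{\theta}$ fixing $\tilde{z}$ of a map that is pseudo-Anosov or periodic non-trivial, the displacement function $x \mapsto d(\tilde{\theta}(x),x)$ is proper. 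Together these give compact coverage at once, since $\mathrm{Fix}(\tilde{\phi})$ is contained in the compact set $\{ x : d(\tilde{\theta}(x),x) \leq K \}$; note that this cannot be read off the circle at infinity as you suggest, because equality of boundary extensions says nothing about the interior behaviour of $\tilde{\phi}$ near a boundary fixed point, and a priori interior fixed points of $\tilde{\phi}$ could accumulate on an attracting fixed point of $\partial \tilde{\theta}$. Non-triviality then follows from a Lefschetz index computation on a large disk on which the geodesic homotopy from $\tilde{\theta}$ to $\tilde{\phi}$ (which moves points at most $K$) has no boundary fixed points, so the index of $\tilde{\phi}$ equals the non-zero index of $\tilde{\theta}$; and the peripheral statement follows because the class peripherally contains a puncture $P$ exactly when $\partial \tilde{\phi} = \partial \tilde{\theta}$ fixes a lift of $P$, which is excluded since the boundary fixed points of a pseudo-Anosov lift are ideal endpoints of lifted invariant-foliation leaves, and a non-trivial periodic lift has no fixed points on the circle at infinity at all. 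Without (i) and (ii) stated and proved, none of the three asserted properties transports from $\theta$ to $\phi$, so the gap is genuine.
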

We apply the previous proposition to identify elements of nilpotent groups that have a lift
whose fixed point set is a non-empty compact set. In the group setting let us consider
a finitely generated
subgroup ${\mathcal F}$ of $\mathrm{Homeo}_{+}(S)$ for a finitely punctured compact surface.
Let $K$ be a compact ${\mathcal F}$-invariant set. As in the previous proposition we consider
a connected essential ${\mathcal F}$-invariant (modulo isotopy rel $K$) subsurface
$M_{1} \subset S \setminus K$ of finite
type and negative Euler characteristic. There exists a well-defined morphism of groups
$\eta: {\mathcal F} \to \mathrm{MCG}(M_{1})$ defined by $\eta(\phi) = \theta_{|M_{1}}$
where $\theta \in \mathrm{Homeo}_{+}(S)$ satisfies $\theta(M_{1})=M_{1}$ and is isotopic
to $\phi$ rel $K$. We denote by $M$ the connected component of $S \setminus K$ that
contains $M_{1}$.
\begin{pro}
\label{pro:lif2}
Suppose that $\eta({\mathcal F})$ is an irreducible non-trivial nilpotent group.
Let $\Theta$ be the
model provided for $\eta({\mathcal F})$ by Lemma \ref{lem:model}.
Suppose that $\Theta$ has a global fixed point in the interior of $M_{1}$.
Consider an element $\phi$ of ${\mathcal F}$ such that $\eta(\phi) \neq Id$.
Then there exists a lift $\tilde{\mathcal F}$ defined in the universal covering of $M$
such that $\mathrm{Fix}(\tilde{\phi})$ is a non-empty compact set.

Suppose further that
${\mathcal F} = \langle Z^{(k)}({\mathcal F}), \phi_{1},\hdots,\phi_{p} \rangle$
is a nilpotent group of nilpotency class less than or equal to $k+1$.
Then there exist $\alpha_{1}$, $\hdots$, $\alpha_{p}$ in
${\mathcal F}$ such that
$\tilde{\mathcal F} = \langle Z^{(k)}(\tilde{\mathcal F}), \tilde{\alpha}_{1},\hdots,\tilde{\alpha}_{p} \rangle$
and $\mathrm{Fix}(\tilde{\alpha}_{j})$ is a non-empty compact set for
any $1 \leq j \leq p$.
\end{pro}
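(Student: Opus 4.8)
The plan is to manufacture a single group lift $\tilde{\mathcal F}$ in the universal cover $\tilde M$ of $M$ out of the model $\Theta$ and its global fixed point $z$, and then to read off the fixed-point statements for individual elements from Proposition \ref{pro:lif1}. First I would handle one element $\phi$ with $\eta(\phi) \neq Id$. Choose a representative $\theta \in \mathrm{Homeo}_{+}(S)$ with $\theta(M_{1}) = M_{1}$, $\theta$ isotopic to $\phi$ rel $K$, and $\theta_{|M_{1}} = \sigma(\eta(\phi))$; such a $\theta$ is obtained from any representative of $\eta(\phi)$ by an isotopy of $M_{1}$ (supported off $K$) pushing $\theta_{|M_{1}}$ onto the model $\sigma(\eta(\phi))$, which is admissible since $\sigma(\eta(\phi))$ is isotopic to $\eta(\phi)$. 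Because $\eta(\phi) \neq Id$, Lemma \ref{lem:model} makes $\theta_{|M_{1}} = \sigma(\eta(\phi))$ pseudo-Anosov or periodic non-trivial, and it fixes the global fixed point $z$ of $\Theta$ in the interior of $M_{1}$. Applying Proposition \ref{pro:lif1} with $C = K$, the subsurface $M_{1}$, the point $z$, and an isotopy $f_{t}$ rel $K$ from $\theta$ to $\phi$, the determined $\phi$-Nielsen class $\nu$ rel $K$ is non-trivial and compactly covered; hence the associated lift $\tilde{\phi}$ to $\tilde M$ has $\mathrm{Fix}(\tilde{\phi})$ non-empty and compact.

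Next I would assemble these determined lifts into a group lift. Put $\rho = \sigma \circ \eta : \mathcal F \to \Theta$, a homomorphism whose image fixes $z$, and fix a lift $\tilde z \in \tilde M$ of $z$. For each $\phi$ take $\theta_{\phi}$ as above with $\theta_{\phi|M_{1}} = \rho(\phi)$, let $\tilde{\theta}_{\phi}$ be its lift to $\tilde M$ fixing $\tilde z$, and let $\tilde{\phi}$ be the determined lift obtained by transporting $\tilde{\theta}_{\phi}$ along the lift of an isotopy from $\theta_{\phi}$ to $\phi$. The key point is that $\phi \mapsto \tilde{\phi}$ is a homomorphism: since $\rho$ is a homomorphism with image fixing $z$, the lifts fixing $\tilde z$ satisfy $\widetilde{\theta_{\phi}\theta_{\psi}} = \tilde{\theta}_{\phi}\tilde{\theta}_{\psi}$ by uniqueness of the $\tilde z$-fixing lift; as $\theta_{\phi}\theta_{\psi}$ again restricts to $\rho(\phi\psi)$ on $M_{1}$ it is an admissible representative of $\phi\psi$, and composing the lifted isotopies (using uniqueness of the lifted isotopy with prescribed initial value) yields $\widetilde{\phi\psi} = \tilde{\phi}\tilde{\psi}$. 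Thus $\tilde{\mathcal F} = \{\tilde{\phi}\}$ is a lift of $\mathcal F$ with $\tau(\phi) = \tilde{\phi}$, and by the previous paragraph $\mathrm{Fix}(\tilde{\phi})$ is non-empty and compact whenever $\eta(\phi) \neq Id$, in particular for the given $\phi$.

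For the statement about generators, write $\mathcal F = \langle Z^{(k)}(\mathcal F), \phi_{1}, \hdots, \phi_{p}\rangle$ and replace each generator $\phi_{j}$ with $\eta(\phi_{j}) = Id$. Since $\eta(\mathcal F) \neq \{Id\}$, either some generator $\phi_{i}$ has $\eta(\phi_{i}) \neq Id$, in which case I set $\alpha_{j} = \phi_{j}\phi_{i}$, or some $\zeta \in Z^{(k)}(\mathcal F)$ has $\eta(\zeta) \neq Id$, in which case I set $\alpha_{j} = \phi_{j}\zeta$; for generators already satisfying $\eta(\phi_{j}) \neq Id$ I keep $\alpha_{j} = \phi_{j}$. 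In every case $\eta(\alpha_{j}) \neq Id$ and $\langle Z^{(k)}(\mathcal F), \alpha_{1}, \hdots, \alpha_{p}\rangle = \mathcal F$, because each $\phi_{j}$ is recovered as $\alpha_{j}\alpha_{i}^{-1}$ (respectively $\alpha_{j}\zeta^{-1}$). Using the single lift $\tilde{\mathcal F}$ from the previous paragraph, $\tau(\alpha_{j}) = \tilde{\alpha}_{j}$ has non-empty compact fixed set for every $j$ since $\eta(\alpha_{j}) \neq Id$. Finally, as $\tau$ is a group isomorphism it carries the upper central series of $\mathcal F_{|M}$ onto that of $\tilde{\mathcal F}$ and the $\alpha_{j}$ onto the $\tilde{\alpha}_{j}$, so $\tilde{\mathcal F} = \langle Z^{(k)}(\tilde{\mathcal F}), \tilde{\alpha}_{1}, \hdots, \tilde{\alpha}_{p}\rangle$.

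The hard part will be the middle step: upgrading the individually good determined lifts to a bona fide group lift. Proposition \ref{pro:lif1} is purely element-by-element, so the whole weight rests on choosing the representatives $\theta_{\phi}$ and the isotopies coherently enough that $\phi \mapsto \tilde{\phi}$ respects composition. The homomorphism property of $\rho$ together with the common fixed point $z$ is exactly what forces the base-pointed lifts to multiply correctly; the delicate point to verify carefully is \emph{well-definedness}, i.e.\ that the determined lift depends only on $\phi$ and not on the auxiliary representative and isotopy once the model restriction on $M_{1}$ is fixed, so that the assignment is consistent across all relations of $\mathcal F$ and not merely on a generating set.
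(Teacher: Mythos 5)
Your proposal is correct and follows essentially the same route as the paper: fix a lift $\tilde{z}$ of the global fixed point of $\Theta$, lift each model representative $\theta_{\phi}$ by the $\tilde{z}$-fixing lift, transport along the lifted isotopy (uniqueness coming from the negative Euler characteristic of $M_{1}$) to obtain a group lift $\tilde{\mathcal F}$, apply Proposition \ref{pro:lif1} to get non-empty compact fixed point sets whenever $\eta(\phi) \neq Id$, and repair the generating set by multiplying the generators killed by $\eta$ with a fixed $\beta \in Z^{(k)}({\mathcal F}) \cup \{\phi_{1},\hdots,\phi_{p}\}$ on which $\eta$ is non-trivial. The only difference is expository: you spell out the well-definedness and homomorphism verifications that the paper compresses into the uniqueness remark about $\tilde{\phi}_{1}$.
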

\begin{proof}
Let $\tilde{M}_{1}$, $\tilde{M}$ be the universal coverings of $M_{1}$, $M$
respectively.
Fix a lift $\tilde{z}$ of $z$ in $\tilde{M}_{1}$.
There exists a unique lift $\tilde{\theta}$ of $\theta$ to
$\tilde{M}_{1}$ such that $\tilde{\theta}(\tilde{z}) = \tilde{z}$
for any $\theta \in \Theta$. We obtain a lift
$\tilde{\Theta}$ of $\Theta$.

Given $\phi \in {\mathcal F}$ consider an element
$\theta \in \mathrm{Homeo}_{+}(S)$ that is isotopic to
$\phi$ rel $K$ and satisfies $\theta_{|M_{1}} \in \Theta$.
The restriction $\theta_{|M_{1}}$ is uniquely defined.
By lifting the isotopy from $\theta$ to $\phi$ we obtain
a lift $\tilde{\phi}_{1}: \tilde{M}_{1} \to \tilde{M}$.
Since $M_{1}$ has negative Euler characteristic, $\tilde{\phi}_{1}$ is uniquely defined.
The map $\tilde{\phi}_{1}$
extends uniquely to a homeomorphism $\tilde{\phi}: \tilde{M} \to \tilde{M}$.
Moreover
$\phi \mapsto \tilde{\phi}$ defines a lift $\tilde{\mathcal F}$ of
${\mathcal F}$.
Proposition \ref{pro:lif1} implies that
$\mathrm{Fix}(\tilde{\phi})$ is a non-empty compact set if
$\eta(\phi)$ is not a trivial class.

There exists $\beta \in {\mathcal F}$ such that $\eta(\beta)$ is non-trivial.
Moreover we can suppose that $\beta$ belongs to
$Z^{(k)}({\mathcal F}) \cup \{\phi_{1},\hdots,\phi_{p}\}$.
We define $\alpha_{j}=\phi_{j}$ if $\eta (\phi_{j})$ is non-trivial and
$\alpha_{j}= \phi_{j} \beta$ otherwise.
\end{proof}
\section{Fixed point theorem for the plane}
Theorem \ref{teo:plane} for commutative groups in
$\mathrm{Diff}_{+}^{1}({\mathbb R}^{2})$ was proved in \cite{FHPs}.
In this section we extend the result to nilpotent groups.
More precisely,
we prove the result for subgroups of $\mathrm{Diff}_{+}^{1}({\mathbb R}^{2})$
in ${\mathcal G}_{k,p}$ by a double induction process, first on $p$ then on $k$.

Let us explain the idea of the proof.
Let $\mathrm{Homeo}_{+}({\mathbb S}^{2})$ be the group of
orientation-preserving homeomorphisms of ${\mathbb S}^{2}$.
We consider a nilpotent subgroup $G$ of $\mathrm{Diff}_{+}^{1}({\mathbb R}^{2})$
as a subgroup of $\mathrm{Homeo}_{+}({\mathbb S}^{2})$
where all the elements fix
the point at infinity. At this point we consider a $G$-invariant closed set
$K$, for instance the fixed point set of a normal subgroup $H$ of $G$.
The idea is taking profit of a decomposition {\it \`{a} la} Thurston
(with improvements along the lines in \cite{FHPs})
of the classes of
isotopy rel $K$. Roughly speaking, the set of isolated points of $K$ is finite and suitable
to apply a Thurston decomposition whereas the set of accumulation points $K'$
of $K$ satisfies that any element of $H$ is isotopic to the identity map in
the neighborhood of $K'$ rel $K'$ (this property is a consequence of the smoothness of the
elements of $G$ and does not hold true for homeomorphisms). Hence it is natural to require
$K$ to be compact in ${\mathbb R}^{2}$ since the elements of $G$ are not $C^{1}$
at $\infty$. A technical difficulty arises since in general the set
$\mathrm{Fix}(H)$ is not compact. The following statements
$A_{k,p}$, $B_{k,p}$, $C_{k,p}$ have hypotheses requiring the group
to have varying degrees of compactness:

${\bf A_{k,p}}$: Theorem \ref{teo:plane} holds true for any group
$G = \langle Z^{(k)}(G), \phi_{1}, \hdots, \phi_{p} \rangle$ of nilpotency class
$\leq k+1$ if $\mathrm{Fix}(\phi_{l})$ is compact for any
$1 \leq l \leq p$.

${\bf B_{k,p}}$: Theorem \ref{teo:plane} holds true for any group
$G = \langle Z^{(k)}(G), \phi_{1}, \hdots, \phi_{p} \rangle$ of nilpotency class
$\leq k+1$ if $\mathrm{Fix} \langle Z^{(k)}(G), \phi_{1}, \hdots, \phi_{p-1} \rangle$
is compact.

${\bf C_{k,p}}$: Theorem \ref{teo:plane} holds true for any group
$G = \langle Z^{(k)}(G), \phi_{1}, \hdots, \phi_{p} \rangle$ of nilpotency class
$\leq k+1$.

We can consider the hypotheses in Statements $A_{k,p}$, $B_{k,p}$, $C_{k,p}$
as conditions of strong, weak and no compactness respectively.
The validity of $C_{0,0}$ is obvious
whereas $C_{0,1}$ is a consequence of Brouwer translation theorem.
Moreover $C_{k,0}$ implies
$C_{k,1}$ for $k >0$ since $\langle Z^{(k)}(G), \phi \rangle$ is a group of nilpotency
class less than or equal to $k$ for any $\phi \in G$.
More precisely, the quotient group $Z^{(k)}(G) / Z^{(k-1)}(G)$ is the
center of $G/Z^{(k-1)}(G)$ by definition of $Z^{(k)}(G)$.
The group $\langle Z^{(k)}(G), \phi \rangle/ Z^{(k-1)}(G)$
is abelian because it is generated by the center of $G/Z^{(k-1)}(G)$ and another element.
As a consequence we obtain
$Z^{(k)} (\langle Z^{(k)}(G), \phi \rangle) = \langle Z^{(k)}(G), \phi \rangle$.
\subsection{Thurston normal form}
In this section we provide a Thurston decomposition for nilpotent groups of
$C^{1}$ diffeomorphisms.
\begin{pro}
\label{pro:tnf}
Let $G$ be a finitely generated
nilpotent subgroup of $\mathrm{Diff}_{+}^{1}(S)$ where $S$ is a finitely
punctured surface. Consider
normal subgroups $H_{1}$, $H_{2}$ of $G$
and $G$-invariant compact sets $K_{1}$, $K_{2}$ such that
$K_{1} \subset \mathrm{Fix}(H_{1})$,
$K_{2} \subset \mathrm{Fix}(H_{2}) \setminus  \mathrm{Fix}(H_{1})$
and $K_{1} \cap K_{2} =\emptyset$.
Suppose that one of the following properties holds true:
\begin{itemize}
\item $K_{2}$ is finite.
\item $G \in {\mathcal G}_{k,p+1}$, $H_{1} \in {\mathcal G}_{k,p}$, $K_{1} = \mathrm{Fix}(H_{1})$,
$\mathrm{Fix}(H_{1}) \cap \mathrm{Fix}(H_{2}) = \emptyset$
and $C_{k,p}$ holds true.
\end{itemize}
We define $M=S \setminus (K_{1} \cup K_{2})$. Suppose further $\chi(M)<0$ if
$K_{1} \cup K_{2}$ is a finite set.
There exists a finite family ${\mathcal R}$ of pairwise disjoint, pairwise non-isotopic rel $K_{1} \cup K_{2}$,
simple closed curves in $M$ and an union ${\mathcal A}$ of annular neighborhoods of the curves
in ${\mathcal R}$ such that for any $\phi \in G$ there exists $\theta \in \mathrm{Homeo}_{+}(S)$
isotopic to $\phi$ rel $K_{1} \cup K_{2}$ with $\theta ({\mathcal A}) = {\mathcal A}$.
Moreover, given any connected component $X$ of $S \setminus {\mathcal A}$ the construction satisfies:
\begin{enumerate}
\item If $(K_{1} \cup K_{2}) \cap X$ is infinite then either
$(K_{1} \cup K_{2}) \cap X = K_{1} \cap X$ and $\theta_{|X} \equiv Id$
for any $\phi \in H_{1}$ or
$(K_{1} \cup K_{2}) \cap X = K_{2} \cap X$ and $\theta_{|X} \equiv Id$
for any $\phi \in H_{2}$.
\item If $(K_{1} \cup K_{2}) \cap X$ is a finite set then
$\chi (X \setminus (K_{1} \cup K_{2})) <0$ and the group generated by
$\theta_{|X}$ for all $\phi$ in the stabilizer of $X$ modulo isotopy rel $K_{1} \cup K_{2}$
is composed by pseudo-Anosov and periodic elements.
More precisely, it is a finite or virtually cyclic nilpotent group.
\end{enumerate}
\end{pro}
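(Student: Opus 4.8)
The plan is to split $K_1 \cup K_2$ into its accumulation part and its isolated part, to treat each by a different mechanism, and to assemble the results into a single reduction system realized by homeomorphisms. Denote by $K_i'$ the set of accumulation points of $K_i$; since $K_1$ and $K_2$ are disjoint compact $G$-invariant sets, $K_1'$ and $K_2'$ are disjoint compact $G$-invariant sets and $K_i' \cap K_{3-i} = \emptyset$. The crucial input is a smoothness lemma: since every $\phi \in H_i$ is a $C^1$ diffeomorphism fixing $K_i$ pointwise, $\phi$ is isotopic to the identity rel $K_i$ on a neighborhood of $K_i'$. This is precisely where the $C^1$ hypothesis is indispensable — the pseudo-rotation example of the introduction shows the statement is false for homeomorphisms. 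Using the $G$-invariance and compactness of $K_i'$ I would excise disjoint neighborhoods $U_1 \supset K_1'$ and $U_2 \supset K_2'$ with $U_i \cap K_{3-i} = \emptyset$ and with boundary curves that are $G$-invariant modulo isotopy rel $K_1 \cup K_2$; these boundary curves are the first members of $\mathcal{R}$.

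The next step is to pass to the complement $S_0 = S \setminus (U_1 \cup U_2)$. Because $K_1' \cup K_2' \subset U_1 \cup U_2$, every point of $(K_1 \cup K_2) \cap S_0$ is isolated in $K_1 \cup K_2$, and this intersection, being a discrete compact set, is finite; together with the punctures of $S$ it turns $S_0$ into a finitely punctured surface on which $G$ acts through a homomorphism into $\mathrm{MCG}(S_0)$ rel the marked set. The image is a quotient of $G$, hence nilpotent and in particular solvable, so by Theorems A and B of \cite{BLM} it is finitely generated, virtually abelian, and admits a canonical reduction system: a finite collection of disjoint, pairwise non-isotopic essential simple closed curves, invariant modulo isotopy, cutting $S_0$ into pieces on which every stabilizer acts by an irreducible subgroup of the mapping class group (pseudo-Anosov or finite order). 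Adjoining these curves to the boundary curves of the $U_i$ yields $\mathcal{R}$, and $\mathcal{A}$ is taken to be a union of disjoint annular neighborhoods of the curves of $\mathcal{R}$. The hypothesis $\chi(M) < 0$ in the finite case guarantees that Thurston theory is non-vacuous here; when $K_1 \cup K_2$ is infinite the presence of accumulation already forces the relevant pieces to have negative Euler characteristic.

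It remains to realize the decomposition by honest homeomorphisms and to check (1) and (2). For a component $X$ lying inside some $U_i$ — these are exactly the components with $(K_1 \cup K_2) \cap X$ infinite — I realize each $\phi \in H_i$ by the identity on $X$, which the smoothness lemma permits; since $U_i$ is disjoint from $K_{3-i}$, we have $(K_1 \cup K_2) \cap X = K_i \cap X$, which is statement (1). For a component $X$ meeting $K_1 \cup K_2$ in a finite set, the defining property of the canonical reduction system gives $\chi(X \setminus (K_1 \cup K_2)) < 0$ and shows that the group induced by the stabilizer of $X$ modulo isotopy is irreducible and nilpotent; Lemma \ref{lem:model} then realizes it by pseudo-Anosov and periodic homeomorphisms and identifies it as a finite or virtually cyclic nilpotent group, which is statement (2). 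Finally, for each $\phi \in G$ I would patch the piecewise models across the annuli of $\mathcal{A}$ into a single $\theta \in \mathrm{Homeo}_+(S)$ isotopic to $\phi$ rel $K_1 \cup K_2$ and satisfying $\theta(\mathcal{A}) = \mathcal{A}$, the annuli being chosen compatibly with the realized action.

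The main obstacle is the interface between the analytic and the topological halves of the argument. The delicate points are, first, proving the smoothness lemma in the equivariant form that makes the excised neighborhoods $U_i$ simultaneously suitable for every element of $H_i$, and, second, carrying out the realization by genuine homeomorphisms so that one common family $\mathcal{A}$ serves all $\phi \in G$ and all pieces at once. In the second alternative of the hypothesis, where $K_2$ may be infinite and $K_1 = \mathrm{Fix}(H_1)$ with $H_1 \in \mathcal{G}_{k,p}$, the inductive hypothesis $C_{k,p}$ is what supplies the control over the fixed-point structure of $H_1$ needed to make $K_1 = \mathrm{Fix}(H_1)$ a compact, well-separated reduction locus and to separate its accumulation set from that of $K_2$; verifying that the hypotheses of $C_{k,p}$ are in force, together with the disjointness $\mathrm{Fix}(H_1) \cap \mathrm{Fix}(H_2) = \emptyset$, is the crux of that case.
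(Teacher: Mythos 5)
Your outline reproduces the paper's broad architecture (the $C^{1}$ smoothness giving isotopy-to-identity near accumulation points of the fixed point sets; BLM virtual abelianness plus Lemma \ref{lem:model} for the finite-type pieces), but the step you defer as a ``delicate point'' --- choosing the neighborhoods $U_{i}$ so that their boundary curves are $G$-invariant modulo isotopy rel $K_{1} \cup K_{2}$ --- is not an interface issue to be patched at the end: it is the central content of the proposition, and your proposal contains no mechanism for it. An arbitrary neighborhood $U_{1}$ of the accumulation set $K_{1}'$ is not canonical: for $\phi \in G$, the image $\phi(U_{1})$ is another neighborhood of $K_{1}'$ which may enclose a different finite collection of isolated points of $K_{1} \cup K_{2}$, or whose boundary curves wind differently around $K_{2}$, and there is no reason it should be isotopic to $U_{1}$ rel $K_{1} \cup K_{2}$. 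The paper's solution is to replace ``a neighborhood of $K_{1}'$'' by a \emph{maximal} element $W_{1}$ of the partially ordered set ${\mathcal W}(H_{1},K_{1},K_{2})$ of subsurfaces on which every element of $H_{1}$ is isotopic rel $K_{1}\cup K_{2}$ to the identity; maximal elements are unique up to isotopy, and since $H_{1}$ is normal in $G$ and $K_{1},K_{2}$ are $G$-invariant, $\phi(W_{1})$ is again maximal, hence isotopic to $W_{1}$. This uniqueness is exactly what produces the $G$-invariance of the reduction system; the second surface $W_{2}$ is then built inside $S \setminus W_{1}$ rel the finite set $K_{1}\setminus W_{1}$, and redundant boundary curves (two curves cobounding an unpunctured annulus) are merged into core curves.

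Relatedly, your account of where $C_{k,p}$ enters is incorrect, and this is a symptom of the gap rather than a side remark. You say $C_{k,p}$ serves ``to make $K_{1}=\mathrm{Fix}(H_{1})$ a compact, well-separated reduction locus and to separate its accumulation set from that of $K_{2}$''; but compactness of $K_{1}$ and the disjointness $\mathrm{Fix}(H_{1}) \cap \mathrm{Fix}(H_{2}) = \emptyset$ are \emph{hypotheses} of the proposition, not conclusions. The actual role of $C_{k,p}$ is in Lemma \ref{lem:existw}: when $K_{2}$ is infinite (the second alternative), the existence of a maximal element of ${\mathcal W}(H_{1},K_{1},K_{2})$ may fail a priori; the failure is reduced to the existence of a disc $D$, invariant modulo isotopy, disjoint from $K_{1}=\mathrm{Fix}(H_{1})$, with $K_{2}\cap D \neq \emptyset$. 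In the commutative case of \cite{FHPs} this is excluded by Brouwer's translation theorem; in the nilpotent case one lifts $H_{1}$ to the universal covering of the component of $S \setminus (K_{1} \cup (K_{2}\setminus D))$ containing $D$, observes that the lift preserves the compact lift of $K_{2}\cap D$, and applies the inductive fixed point statement $C_{k,p}$ to produce a fixed point of $H_{1}$ outside $\mathrm{Fix}(H_{1})$ --- a contradiction. In your scheme there is no step at which $C_{k,p}$ is invoked at all, which cannot be right: the proposition is proved \emph{inside} the induction establishing Theorem \ref{teo:plane}, and the second alternative of its hypotheses is precisely the case where the inductive hypothesis is indispensable.
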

The curves in ${\mathcal R}$ are called {\it reducing curves}.
This result is the analogue of Proposition 2.11 of \cite{FHPs} adapted to the
nilpotent setting. The proof is also similar. We sketch the proof,
following the presentation of \cite{FHPs} and stressing the
points where changes are required.
\begin{defi}
\label{def:dual}
We define the dual tree of ${\mathcal R}$. The vertices are the connected components of
$S \setminus {\mathcal A}$. We define an edge for any $\gamma \in {\mathcal R}$; it
connects the two connected components of $S \setminus {\mathcal A}$ that
share boundary components with the annulus neighborhood of $\gamma$.
If we assign length one to every edge, the group $G$ acts on the tree
by isometries.
\end{defi}
Later on we will apply the next result to the dual tree.
\begin{pro}
\label{pro:iso}
(cf. \cite[p. 20]{Ser-Sti})
Let $G$ be a group acting by isometries on a finite tree.
Then the action has a fixed point.
\end{pro}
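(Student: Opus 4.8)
The plan is to exploit the fact that a finite tree possesses a canonical \emph{center} that every isometry must preserve. First I would dispose of the base cases. If the tree reduces to a single vertex, that vertex is a global fixed point. If it reduces to a single edge with endpoints $u$ and $v$, then since an isometry preserves the set $\{u,v\}$ it either fixes both endpoints or interchanges them; in the former case either endpoint works, and in the latter case the midpoint of the edge is fixed. Note that we work in the geometric realization of the tree as a metric space, so a fixed point is allowed to be the midpoint of an edge rather than a vertex — this flexibility is exactly what handles edge-inversions.

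For the general case I would argue by induction on the number of vertices (equivalently, on the diameter of the tree). The key observation is that the set $L$ of \emph{leaves}, i.e. the vertices of degree one, is an intrinsic feature of the tree and is therefore permuted by every isometry in $G$. Removing all the leaves together with their incident edges produces a subtree $T'$ that is again connected and acyclic, is $G$-invariant, and has strictly fewer vertices. By the inductive hypothesis the induced $G$-action on $T'$ has a fixed point, and this point is also fixed by the action on $T$. The induction terminates because iterated leaf-removal on a finite tree eventually reaches either a single vertex or a single edge, which are precisely the base cases treated above; this iterated trimming is nothing but the classical construction of the center of a finite tree.

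The point requiring the most care is checking that the trimming operation preserves the tree structure at each stage and genuinely reduces to one of the two base cases, together with the edge-inversion subtlety noted above; both are standard but must be stated cleanly. A more conceptual alternative that avoids the combinatorial bookkeeping would be to observe that a tree is a CAT$(0)$ (indeed $0$-hyperbolic) geodesic space, that every orbit $Gx$ is finite and hence bounded, and that any bounded subset of a CAT$(0)$ space admits a unique circumcenter. Since the orbit $Gx$ is $G$-invariant, the uniqueness forces its circumcenter to be a $G$-fixed point. Either route delivers the conclusion, so I would present the elementary leaf-trimming argument and remark on the circumcenter viewpoint for context.
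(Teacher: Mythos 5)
Your proof is correct and follows essentially the same approach as the paper's source: the paper gives no proof of this proposition, referring instead to Serre's book on trees, where the argument is precisely your leaf-trimming induction producing the center of a finite tree, with the same allowance for the fixed point to be the midpoint of an inverted edge — exactly the flexibility the paper needs later, when the fixed element $\tau$ may be an edge rather than a vertex. Your alternative circumcenter argument is also valid (it is the Bruhat--Tits fixed point theorem, applied to the orbits of the image of $G$ in the finite isometry group of the tree, which are automatically bounded), but the elementary induction already suffices.
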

Let $S$ be a finitely punctured surface. Consider a
finitely generated subgroup
${\mathcal F}$ of $\mathrm{Diff}_{+}^{1}(S)$.
Let $B$ and $C$ compact ${\mathcal F}$-invariant sets
such that
$B \subset \mathrm{Fix}({\mathcal F})$ and
$C \cap \mathrm{Fix}({\mathcal F}) = \emptyset$
Suppose that the image of ${\mathcal F}$ in the group of classes of
homeomorphisms modulo isotopy rel $B \cup C$ is nilpotent.
\begin{defi}
\label{FHPs}
Let ${\mathcal W}({\mathcal F}, B, C)$ be the set of compact subsurfaces
(they can contain punctures) $W \subset S$ such that:
\begin{itemize}
\item (${\mathcal W}-1$) $\partial W$ has finitely many components;
each of them is contained in
and is essential in $M := S \setminus (B \cup C)$;
\item (${\mathcal W}-2$) $B \setminus W$ is a finite set and every component of $W$
intersects $B$ in an infinite set;
\item (${\mathcal W}-3$) for all $f \in  {\mathcal F}$ the following is satisfied:
\begin{itemize}
\item (${\mathcal W}-3f$) there exists $\phi: S \to S$ isotopic to $f$ rel $B \cup C$ such that $\phi_{|W}$ is the
identity.
\end{itemize}
\end{itemize}
\end{defi}
The set ${\mathcal W}({\mathcal F}, B, C)$ is not empty if
$B$ is an infinite set. This is Lemma 2.5 of \cite{FHPs} for
the commutative case. The proof holds true for any
finitely generated subgroup of $\mathrm{Diff}_{+}^{1}(S)$.

Consider the setup in Proposition \ref{pro:tnf}.
The idea is that a maximal element $W$ of ${\mathcal W}({\mathcal F}, B, C)$
is unique up to isotopy. If we define
${\mathcal F}=H_{1}$, $B=K_{1}$ and $C=K_{2}$
the uniqueness implies that $W$
is invariant by $G$ up to isotopy rel $K_{1} \cup K_{2}$.
Naturally the connected components of $W$ are pieces of
the decomposition, they satisfy the condition (1) in Proposition \ref{pro:tnf}.

Let us show that ${\mathcal W}({\mathcal F}, B,C)$ has maximal elements.
We can define a partial order in ${\mathcal W}({\mathcal F}, B,C)$.
We say that $W_{1} < W_{2}$ if $W_{1}$ is isotopic rel $B \cup C$
to a subsurface of  $W_{2}$  but $W_{1}$ is not isotopic rel $B \cup C$
to $W_{2}$.
The next lemma is a version of
Lemma 2.6 of \cite{FHPs} adapted to the nilpotent setting.

\begin{lem}
\label{lem:existw}
Suppose that the set $B$ is infinite and that one of the following conditions
is satisfied:
\begin{itemize}
\item $C$ is finite.
\item ${\mathcal F}$ belongs to ${\mathcal G}_{k,p}$, $C_{k,p}$ holds true
and $B = \mathrm{Fix}({\mathcal F})$.
\end{itemize}
Then ${\mathcal W}({\mathcal F},B,C)$ contains maximal subsurfaces.
\end{lem}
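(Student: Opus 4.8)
The plan is to prove that the partially ordered set $({\mathcal W}({\mathcal F},B,C),<)$ satisfies the ascending chain condition; since it is non-empty (as noted after Definition \ref{FHPs}, because $B$ is infinite), this yields maximal elements. Equivalently, I will bound the topological complexity of the members of ${\mathcal W}({\mathcal F},B,C)$ and then run a stabilization argument on the non-increasing integer-valued quantity $\sharp(B \setminus W)$.

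First I record two monotonicity facts along a strictly increasing chain $W_{1} < W_{2} < \cdots$. The genus of $W_{i}$ is non-decreasing and bounded above by the genus of $S$, hence eventually constant. The cardinality $\sharp(B \setminus W_{i})$ is a non-increasing sequence of non-negative integers: up to isotopy $W_{i}$ embeds in $W_{i+1}$, and since the isotopies are rel $B \cup C$ they fix $B$ pointwise, so $B \cap W_{i} \subseteq B \cap W_{i+1}$. Thus $\sharp(B \setminus W_{i})$ is eventually constant. I may therefore pass to a tail of the chain on which both the genus and the $B$-content are frozen; along such a tail the added region $\overline{W_{i+1} \setminus W_{i}}$ carries no extra genus and contains no point of $B$.

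Now I analyze this added region. Its boundary curves are essential in $M$, so no added component can be a disk meeting $B \cup C$ in at most one point (it would be inessential in $M$) nor an annulus parallel into $\partial W_{i}$ (it would make $W_{i+1}$ isotopic to $W_{i}$, contradicting strictness). Hence every strict enlargement must swallow a genuinely non-trivial piece. In the first case, where $C$ is finite, such a piece must contain at least one point of $C$ or a puncture of $S$; since these are finite in number and each is absorbed only once, the chain terminates. This is exactly the marked-point counting underlying the estimate that the number of boundary components of $W$ grows at most linearly in $g(S) + \sharp C + \sharp(B \setminus W)$, which makes $-\chi(W)$ bounded over ${\mathcal W}({\mathcal F},B,C)$.

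The main obstacle is the second case, in which $C$ may be infinite, so the counting of complementary marked points no longer terminates the chain: an enlargement could absorb an infinite cluster of points of $C$. Here I would exploit the extra hypotheses $B = \mathrm{Fix}({\mathcal F})$ and the validity of $C_{k,p}$. Since $C$ is compact and disjoint from $\mathrm{Fix}({\mathcal F})$, its accumulation set $C'$ is a compact ${\mathcal F}$-invariant set contained in $C \setminus B$; by the $C^{1}$ hypothesis the elements of ${\mathcal F}$ are isotopic to the identity in a neighborhood of $C'$ rel $C'$, so the complementary pieces that accumulate on $C'$ do not obstruct condition $({\mathcal W}-3)$ and contribute no essential boundary that ${\mathcal F}$ is forced to respect. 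The remaining complementary pieces meet $C$ in a finite set and contain no point of $B = \mathrm{Fix}({\mathcal F})$; applying the inductive hypothesis $C_{k,p}$ to the lift of ${\mathcal F} \in {\mathcal G}_{k,p}$ on such a piece, along the lines of Proposition \ref{pro:lif2}, constrains the dynamics there so that only finitely many of them can carry non-trivial isotopy. This again bounds the number of essential boundary curves of $W$, hence $-\chi(W)$. With the complexity bounded in both cases, the frozen-tail argument of the preceding paragraph shows that no strictly increasing chain can be infinite, and therefore ${\mathcal W}({\mathcal F},B,C)$ has maximal elements. I expect the delicate point to be verifying that the pieces accumulating on $C'$ impose no constraint, which is precisely where the smoothness of ${\mathcal F}$ and the hypothesis $C_{k,p}$ must be combined with care.
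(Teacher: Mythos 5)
Your treatment of the first case ($C$ finite) follows the same complexity-bounding, chain-stabilization line as Lemma 2.6 of \cite{FHPs}, which is exactly what the paper invokes there, so that part is fine in outline. The genuine gap is in the second case, and it occurs precisely at the point you yourself flag as delicate. First, your key claim that ``by the $C^{1}$ hypothesis the elements of ${\mathcal F}$ are isotopic to the identity in a neighborhood of $C'$ rel $C'$'' is false: that smoothness argument applies to accumulation points of the \emph{fixed point set} (i.e.\ of $B \subset \mathrm{Fix}({\mathcal F})$), where the $C^{1}$ hypothesis forces derivatives at such points to be the identity. Here $C$ satisfies $C \cap \mathrm{Fix}({\mathcal F}) = \emptyset$ by the standing setup, so no non-trivial element of ${\mathcal F}$ fixes any point of $C' \subset C$, and an isotopy ``rel $C'$'' from such an element to the identity cannot even exist. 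Second, your invocation of $C_{k,p}$ (``constrains the dynamics there so that only finitely many of them can carry non-trivial isotopy'') is not an argument: $C_{k,p}$ is a fixed-point existence statement, and your sketch provides no mechanism converting it into a bound on the number of essential boundary curves, which is what your ascending-chain scheme would need.

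The paper uses $C_{k,p}$ in an entirely different---and essentially the only workable---way: by contradiction. If ${\mathcal W}({\mathcal F},B,C)$ had no maximal element, the argument of Lemma 2.6 of \cite{FHPs} produces a disc $D \subset S$, invariant under ${\mathcal F}$ modulo isotopy rel $B \cup C$, disjoint from $B$, with $C_{D} := C \cap D \neq \emptyset$. One then lifts ${\mathcal F}$ to the universal covering $X$ of the component of $S \setminus (B \cup (C \setminus C_{D}))$ containing $D$, choosing for each $f \in {\mathcal F}$ the unique lift preserving the compact set $\tilde{C}_{D}$ (the lift of $C_{D}$ inside a fixed lift $\tilde{D}$ of the disc). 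The lifted group preserves a non-empty compact set, so $C_{k,p}$ yields $\mathrm{Fix}(\tilde{\mathcal F}) \neq \emptyset$, hence a fixed point of ${\mathcal F}$ in $S \setminus B$, contradicting the hypothesis $B = \mathrm{Fix}({\mathcal F})$. That hypothesis is where the second bullet actually does its work; in your sketch it never produces a contradiction, it is only mentioned in passing. So the missing idea is: failure of maximality must be converted into an ${\mathcal F}$-invariant compact set inside an invariant disc disjoint from $B$, so that the fixed point granted by $C_{k,p}$ contradicts $B = \mathrm{Fix}({\mathcal F})$; no purely topological counting closes the infinite-$C$ case.
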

\begin{proof}
The proof for the case $\sharp C < \infty$ is the same as in
Lemma 2.6 of \cite{FHPs}.
Moreover they prove that if the result is false there exists a disc
$D$ in $S$ that is ${\mathcal F}$-invariant
modulo isotopy rel $B \cup C$, disjoint from $B$ and such that
$C_{D}: = C \cap D$ is non-empty.

Let $X$ be the universal covering of the connected component of
$S \setminus (B \cup (C \setminus C_{D}))$ containing $D$.
We choose a lift $\tilde{D}$ of $D$ to $X$.
It is also a disc. Given $f \in {\mathcal F}$ there exists
a unique lift $\tilde{f}$ of $f$ to $X$ such that
$\tilde{f} (\tilde{C}_{D}) = \tilde{C}_{D}$
where $\tilde{C}_{D}$ is the lift of $C_{D}$ to $\tilde{D}$.
In this way we obtain a lift $\tilde{\mathcal F}$ of
${\mathcal F}$ to $X$ that preserves the compact set
$\tilde{C}_{D}$. Since $C_{k,p}$ holds true,
$\mathrm{Fix}(\tilde{\mathcal F})$ is non-empty.
Therefore $\mathrm{Fix}({\mathcal F}) \cap (S \setminus B)$
is non-empty, a contradiction.
\end{proof}
The Thurston decomposition for nilpotent groups is proved as a part of
the inductive step (we need to use that $C_{k,p}$ holds true).
This is not necessary in \cite{FHPs} since in the case $\sharp C = \infty$
the group ${\mathcal F}$ is always cyclic and the contradiction is provided by
the Brouwer translation theorem.

The set ${\mathcal W}({\mathcal F}, B, C)$ has a unique maximal element, i.e.
two maximal elements of ${\mathcal W}({\mathcal F}, B, C)$ are isotopic
rel $B \cup C$.
The proofs in Lemmas 2.7 and 2.8 of \cite{FHPs} apply here.
\begin{proof}[Proof of Proposition \ref{pro:tnf}]
We follow the steps of Proposition 2.11 of \cite{FHPs}. We include a proof
for the sake of clarity.

  We define $W_{1} = \emptyset$ if $K_{1}$ is finite, otherwise
consider a maximal element $W_{1}$ of
${\mathcal W}(H_{1},K_{1},K_{2})$.
Since $H_{1}$ is a normal subgroup of $G$ and $K_{1}$, $K_{2}$ are
$G$-invariant, we obtain that
$\phi(W_{1})$ is isotopic to $W_{1}$ rel $K_{1} \cup K_{2}$ for any
$\phi \in G$. There exists a diffeomorphism $\theta_{\phi} : S \to S$
such that $\phi$ is isotopic to $\theta_{\phi}$ rel $K_{1} \cup K_{2}$
and $\theta_{\phi}(W_{1}) = W_{1}$ for any $\phi \in G$.
Moreover we can suppose that $(\theta_{\phi})_{|W_{1}} \equiv Id$ if $\phi \in H_{1}$.

We denote the finitely punctured surface $S \setminus W_{1}$ by $S'$.
Consider a set of generators $\{ \phi_{1},\hdots, \phi_{m}\}$ of $G$
such that $H_{2}= \langle \phi_{1},\hdots, \phi_{n} \rangle$ for some $n \leq m$.
We can suppose that $\{ \phi_{1},\hdots, \phi_{m}\}$
and $\{\phi_{1},\hdots, \phi_{n}\}$ contain sets of generators of
$H_{1}$ and $H_{1} \cap H_{2}$ respectively.
We denote $G'= \langle \theta_{\phi_{1}}, \hdots, \theta_{\phi_{m}} \rangle$
and $H_{2}'= \langle \theta_{\phi_{1}}, \hdots, \theta_{\phi_{n}} \rangle$.
They are subgroups of $\mathrm{Diff}_{+}^{1}(S')$.
Moreover $H_{2}'$ is a normal subgroup of $G'$ modulo isotopy
rel $S' \cap (K_{1} \cup K_{2})$.
If $K_{2}$ is finite we define $W_{2}=\emptyset$.
Otherwise consider a maximal element $W_{2}$ of
${\mathcal W}(H_{2}',K_{2},K_{1} \setminus W_{1})$.
We can suppose that $(\theta_{\phi_{j}})_{|W_{2}} \equiv Id$ for any $1 \leq j \leq n$
up to an isotopy rel $K_{1} \cup K_{2}$.
The maximal nature of $W_{2}$ implies that up to an isotopy
rel $K_{1} \cup K_{2}$ the diffeomorphism
$\theta_{\phi_{j}}$ preserves $W_{2}$ for any $1 \leq j \leq m$.

The connected components of $\partial W_{1} \cup \partial W_{2}$
are natural candidates to be reducing curves.
Anyway we can have redundant curves. More precisely,
if two curves in $\partial W_{1} \cup \partial W_{2}$ are the two
boundary components of an unpunctured annulus contained in $S$
then we replace them with a core curve of the annulus.
In this way we obtain a set ${\mathcal R}'$ of reducing curves.
It is easy to tweak slightly the construction so that
$\theta_{\phi}$ preserves an annular neighborhood
${\mathcal A}$ of ${\mathcal R}'$ for any $\phi \in G$
and that condition (1) in Proposition \ref{pro:tnf}
still holds true.
We define $S''=S \setminus (W_{1} \cup W_{2})$, it is a finite
type subsurface of $S$.
Let $G_{0}$ be the group induced by $G$ or $G'$ in the mapping
class group of $S''$.
The group $G_{0}$ is nilpotent, moreover it is virtually abelian
(Theorem B of \cite{BLM}).
The reduction of $G_{0}$ to Thurston normal form is equivalent to the reduction
of a finite index normal subgroup.
Indeed a homeomorphism is irreducible (i.e. periodic or pseudo-Anosov) if and only
if an iterate is irreducible.
So in order to reduce $G_{0}$ it suffices to reduce an abelian subgroup of
a mapping class group of a finitely punctured surface. This is well known,
see for example Lemma 2.2 of \cite{Handel-top}.
Lemma \ref{lem:model} implies that we can complete
${\mathcal R}'$ to obtain a set of reducing curves ${\mathcal R}$
such that condition (2) holds true whereas we still preserve
the other properties.
\end{proof}
\subsection{Proof of Theorem \ref{teo:plane}}
Our point of view is replacing a subgroup $G$ of $\mathrm{Diff}_{+}^{1}({\mathbb R}^{2})$
with an irreducible nilpotent model that defines the same image in the
mapping class group of a subsurface of ${\mathbb R}^{2}$, obtaining
information for the model and then interpreting it for the initial group.
One of the reasons because this approach works is that nilpotent groups
are quite rigid. For instance the next lemma implies that in
Theorem \ref{teo:plane} we can replace the condition of existence of
a $G$-invariant non-empty compact set with the existence of an element of
$G$ whose fixed point set is a non-empty compact set.
A condition on a single element implies the existence of a global fixed point for the group.
\begin{lem}
\label{lem:upt}
Let $G = \langle Z^{(k)}(G), \phi_{1}, \hdots, \phi_{p} \rangle$ be a finitely generated
nilpotent subgroup of
$\mathrm{Diff}_{+}^{1}({\mathbb R}^{2})$ of nilpotency class $\leq k+1$.
Suppose that there exists
$\alpha \in G$ such that $\mathrm{Fix}(\alpha)$ is a non-empty compact set.
Suppose further that $C_{k,p}$ holds true. Then
$\mathrm{Fix} (G)$ is non-empty.
\end{lem}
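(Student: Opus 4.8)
The plan is to use the single element $\alpha$ with compact fixed-point set to bootstrap to the full hypothesis of Theorem \ref{teo:plane}, namely that $G$ preserves a non-empty compact set, and then invoke the assumed statement $C_{k,p}$. The immediate difficulty is that $\mathrm{Fix}(\alpha)$ itself need not be $G$-invariant, so it cannot directly serve as the invariant compact set. First I would attempt to produce a $G$-invariant compact set out of $\mathrm{Fix}(\alpha)$. The natural candidate is the union $\bigcup_{\phi \in G} \phi(\mathrm{Fix}(\alpha))$, but this is generally neither compact nor closed, so a more careful argument is needed.

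**The key structural observation.** Here I would exploit the rigidity of nilpotent groups. Since $G$ is nilpotent and $\alpha \in G$, consider the conjugates $\phi \alpha \phi^{-1}$ for $\phi \in G$: each has fixed-point set $\phi(\mathrm{Fix}(\alpha))$, again a non-empty compact set. The crucial point is that in a nilpotent group the commutators $[\phi,\alpha]$ lie in a lower term of the central series, so the conjugates $\phi \alpha \phi^{-1} = [\phi,\alpha]\,\alpha$ differ from $\alpha$ by a controlled element. I would try to arrange that $\mathrm{Fix}(\alpha)$ can be replaced by a genuinely $G$-invariant compact set by passing to the normal closure of $\langle \alpha \rangle$ or, better, to a suitable element of the upper central series. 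Concretely, one can consider the smallest $l$ with $\alpha \in Z^{(l)}(G)$; if $\alpha$ can be chosen central then $\mathrm{Fix}(\alpha)$ is already $G$-invariant and we are done immediately via $C_{k,p}$. The task is to reduce to this central situation.

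**The reduction via the model and lifts.** If $\alpha$ is not central, I would look at the action of $G$ on $\mathrm{Fix}(\alpha)$ through the quotient structure. The set $\mathrm{Fix}(\alpha)$ is compact and $\alpha$-invariant, and the centralizer of $\alpha$ acts on it; the heart of the matter is to locate within the $G$-orbit dynamics a genuinely invariant compact piece. This is exactly the kind of situation Proposition \ref{pro:lif2} is designed for: pass to the image $\eta(G)$ in the mapping class group of an invariant subsurface, apply Lemma \ref{lem:model} to obtain an irreducible nilpotent model $\Theta$, and use that a finite-order or pseudo-Anosov model has a global fixed point, which lifts back (via Proposition \ref{pro:lif1}) to an element with compact fixed set in a lift $\tilde{G}$. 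Then $\mathrm{Fix}(\tilde{G}) \neq \emptyset$ follows from $C_{k,p}$ applied in the universal cover, and $\mathrm{Fix}(\tilde{G}) \neq \emptyset$ implies $\mathrm{Fix}(G) \cap U \neq \emptyset$, giving the conclusion.

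**Where the obstacle lies.** I expect the main obstacle to be the transition from ``$\alpha$ has a compact fixed set'' to ``$G$ preserves a compact set,'' precisely because $\mathrm{Fix}(\alpha)$ is not \emph{a priori} invariant and the orbit union fails to be compact. The resolution should come from combining the nilpotency (which forces conjugates of $\alpha$ to interact tamely) with the assumed inductive hypothesis $C_{k,p}$, so that one never needs a single invariant set but rather transfers the problem to an invariant subsurface and its universal cover where Proposition \ref{pro:lif2} supplies generators $\tilde{\alpha}_j$ with compact fixed sets. The delicate bookkeeping will be ensuring that the nilpotency class and generator count are preserved through the passage to the lift, so that $C_{k,p}$ is legitimately applicable; this is where the structure of $\tilde{G} = \langle Z^{(k)}(\tilde{G}), \tilde{\alpha}_1, \ldots, \tilde{\alpha}_p \rangle$ from Proposition \ref{pro:lif2} does the essential work.
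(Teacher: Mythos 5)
Your diagnosis of the difficulty is right---$\mathrm{Fix}(\alpha)$ is not a priori $G$-invariant---but the route you propose to overcome it does not close the gap, and is in fact circular. Proposition \ref{pro:lif2} (and the Thurston decomposition of Proposition \ref{pro:tnf} behind it) takes as input a compact set $K$ invariant under the \emph{whole} group, together with an essential subsurface invariant modulo isotopy rel $K$; these are exactly the objects you do not have. You say the problem should be ``transferred to an invariant subsurface and its universal cover,'' but you never construct such a subsurface, and no construction is available at this stage: producing a $G$-invariant compact set (or invariant subsurface) from the hypothesis on the single element $\alpha$ is precisely the content of the lemma. Your closing paragraph concedes this (``the resolution should come from\ldots'') without supplying it, so the proof is incomplete at its central step. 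The heavy machinery is also unnecessary here.

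The actual argument is elementary and is the completion of the algebraic track you started and then abandoned. Consider the chain $\langle Z^{(0)}(G),\alpha\rangle \leq \langle Z^{(1)}(G),\alpha\rangle \leq \cdots \leq \langle Z^{(k)}(G),\alpha\rangle$. Two observations make it work. First, every set $\mathrm{Fix}\langle Z^{(l)}(G),\alpha\rangle$ is a closed subset of $\mathrm{Fix}(\alpha)$, hence automatically compact: one never needs invariance under all of $G$, only under the next group of the chain. Second, each group in the chain is normal in the next (by definition of the upper central series, conjugation by $Z^{(l+1)}(G)$ moves $\alpha$ by an element of $Z^{(l)}(G)$), so $\mathrm{Fix}\langle Z^{(l)}(G),\alpha\rangle$ is invariant under $\langle Z^{(l+1)}(G),\alpha\rangle$; since $\langle Z^{(l+1)}(G),\alpha\rangle$ sits inside $\langle Z^{(k)}(G),\alpha\rangle$, which has nilpotency class $\leq k$, Statement $C_{k,0}$ (available once $C_{k,p}$ is assumed) yields $\mathrm{Fix}\langle Z^{(l+1)}(G),\alpha\rangle \neq \emptyset$. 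Climbing the chain gives a non-empty compact set $\mathrm{Fix}\langle Z^{(k)}(G),\alpha\rangle$, which is $G$-invariant because $\langle Z^{(k)}(G),\alpha\rangle$ is normal in $G$ (the quotient $G/Z^{(k)}(G)$ is abelian), and one application of $C_{k,p}$ to $G$ preserving this set finishes the proof. So the fix for your ``key structural observation'' is not to reduce to the case of central $\alpha$, but to adjoin $\alpha$ to the upper central series and induct; no mapping class groups, models, or lifts enter at all.
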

\begin{proof}
The series
\[ 1 \leq \ \langle Z^{(0)} (G),{\alpha} \rangle \ \leq \ \langle Z^{(1)} (G),{\alpha} \rangle \ \leq
\hdots \leq \ \langle Z^{(k)} (G),{\alpha} \rangle \]
is subnormal and every quotient of consecutive subgroups
is abelian. Let us prove that
$\mathrm{Fix} \langle Z^{(l)} (G),\alpha \rangle$ is non-empty for any $0 \leq l \leq k$.
It is obvious for $l=0$ and if $l<k$ the compact set
$\mathrm{Fix} \langle Z^{(l)} (G),\alpha \rangle$ is invariant by
$\langle Z^{(l+1)} (G),\alpha \rangle$.
We obtain $\mathrm{Fix} \langle Z^{(l+1)} (G),\alpha \rangle \neq \emptyset$ by Statement
$C_{k,0}$. Since $\langle Z^{(k)} (G),\alpha \rangle$ is normal in
$G$, Statement $C_{k,p}$ implies that
$\mathrm{Fix} \langle Z^{(k)}(G), {\phi}_{1}, \hdots, {\phi}_{p} \rangle$ is not empty.
\end{proof}
\begin{lem}
\label{lem:aux1}
Let $G \in {\mathcal G}_{k,p+1}$ be a finitely generated subgroup of $\mathrm{Diff}_{+}^{1}({\mathbb R}^{2})$.
Consider a normal subgroup $H$ of $G$ and $f \in G$ such that
the class of $f$ in $G/H$ belongs to the center of $G/H$. We denote $L= \langle H,f \rangle$.
Suppose that $H \in {\mathcal G}_{k,p}$.
Let $K \subset \mathrm{Fix}(H)$ be a non-empty compact
$G$-invariant subset. Suppose that $C_{k,p}$ holds true.
Then one of the following possibilities is satisfied:
\begin{enumerate}
\item $K \cap \mathrm{Fix}(L) \neq \emptyset$.
\item There exists a lift $\tilde{G}$ of $G$ such that
      $\tilde{G} = \langle Z^{(k)}(\tilde{G}),\tilde{\alpha}_{1},\hdots,\tilde{\alpha}_{p+1} \rangle$
      and $\mathrm{Fix}(\tilde{\alpha}_{l})$ is a non-empty compact set for any
      $1 \leq l \leq p+1$.
      Moreover $\mathrm{Fix}(\tilde{f})$ is a non-empty compact set if $G=L$.
\item There exists a lift $\tilde{L}$ of $L$
      such that $\mathrm{Fix}(\tilde{f})$ is non-empty and compact.
      Moreover if $L \in  {\mathcal G}_{k,p}$ then
      $\cap_{\phi \in G} \phi (P(K, f) \cap \mathrm{Fix}(H))$ is a non-empty
      compact $G$-invariant set.
\end{enumerate}
The lift $\tilde{G}$ (resp. $\tilde{L}$) of $G$ (resp. $L$) is defined in the universal covering
of a connected component of ${\mathbb R}^{2} \setminus K'$ where $K'$ is some $G$-invariant
(resp. $L$-invariant) non-empty compact set contained in $K$.
\end{lem}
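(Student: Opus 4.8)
The plan is to assume that alternative (1) fails and to produce either (2) or (3). Since $K\subset\mathrm{Fix}(H)$ we have $\mathrm{Fix}(L)=\mathrm{Fix}(H)\cap\mathrm{Fix}(f)$, so the failure of (1) means exactly that $f$ has no fixed point on $K$. Regarding $G$ as a subgroup of $\mathrm{Homeo}_{+}({\mathbb S}^{2})$ fixing the point $\infty$ and working on $S={\mathbb S}^{2}$ with $\infty$ as a puncture, the set $K$ becomes a non-empty compact $f$-invariant set with $K\cap\mathrm{Fix}(f)=\emptyset$. Proposition \ref{pro:niel} then guarantees that $P(K,f)$ is a non-empty compact set; its points are fixed points of $f$ lying in Nielsen classes that do not peripherally contain $\infty$, and these are the candidates for the points of $\mathrm{Fix}(L)$ we want to detect.

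Next I would build a $G$-equivariant Thurston decomposition rel $K$. Applying Proposition \ref{pro:tnf} with $H_{1}=H$, $K_{1}=K$ and $K_{2}=\emptyset$ (so that the first, ``$K_{2}$ finite'', alternative of its hypotheses holds, using $G\in{\mathcal G}_{k,p+1}$, $H\in{\mathcal G}_{k,p}$ and the validity of $C_{k,p}$), I obtain a family of reducing curves, an annular neighborhood $\mathcal{A}$ preserved up to isotopy rel $K$ by every element of $G$, and a decomposition of $S$ into pieces. On the dual tree of Definition \ref{def:dual} the group $G$ acts by isometries, so Proposition \ref{pro:iso} provides a $G$-fixed vertex or edge, that is, a $G$-invariant piece (or an invariant reducing curve). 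Because the fixed points of $f$ in $P(K,f)$ are essential and non-peripheral, they are carried by an invariant piece $X$ of type (2) of Proposition \ref{pro:tnf}, where $\chi(X\setminus K)<0$ and the induced group is an irreducible, finite or virtually cyclic nilpotent group. The compact invariant set $K'\subset K$ cut out by the part of the decomposition bounding $X$ is what will carry the lift: $M$ is the component of ${\mathbb R}^{2}\setminus K'$ containing $X$, and the lifts will live in the universal covering of $M$.

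On such a piece $X$ I would run the model/lift machinery. Lemma \ref{lem:model} furnishes a model $\Theta$ of the induced group, which by Remark \ref{rem:pt2} always has fixed points (or a $2$-orbit). I then split according to the induced action. If the image $\eta(G)$ of the whole group is non-trivial and $\Theta$ has a global fixed point in the interior of $X$, then Proposition \ref{pro:lif2} applied to $\mathcal{F}=G$, $M_{1}=X$ produces a lift $\tilde{G}=\langle Z^{(k)}(\tilde{G}),\tilde{\alpha}_{1},\hdots,\tilde{\alpha}_{p+1}\rangle$ in which every $\mathrm{Fix}(\tilde{\alpha}_{l})$ is a non-empty compact set (and $\mathrm{Fix}(\tilde{f})$ is compact when $G=L$); this is exactly alternative (2). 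If instead the model of $G$ admits no admissible $G$-lift but $\eta(f)\neq Id$, then the same proposition applied to $\mathcal{F}=L$ and $\phi=f$ yields a lift $\tilde{L}$ with $\mathrm{Fix}(\tilde{f})$ non-empty and compact, which is the first part of alternative (3). The assumption that (1) fails is what forces $f$ to act non-trivially on an invariant piece, since if $\eta(f)=Id$ on every invariant piece the essential fixed points of $f$ would be forced to meet $\mathrm{Fix}(H)$ and return us to alternative (1).

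The main obstacle is the bookkeeping in alternative (3): showing that $\cap_{\phi\in G}\phi\bigl(P(K,f)\cap\mathrm{Fix}(H)\bigr)$ is a non-empty compact $G$-invariant set when $L\in{\mathcal G}_{k,p}$. Compactness is immediate from Proposition \ref{pro:niel} together with the fact that $\mathrm{Fix}(H)$ is closed, and $G$-invariance is built into the intersection, so the real content is non-emptiness. For this I would exploit that $f$ is central modulo $H$, so for each $g\in G$ one has $gfg^{-1}=fh$ with $h\in H$, and $fh$ agrees with $f$ on the $G$-invariant set $\mathrm{Fix}(H)$; this identifies the conjugated Nielsen classes over $\mathrm{Fix}(H)$ and shows that $P(K,f)\cap\mathrm{Fix}(H)$ is, up to the intersection, $G$-invariant. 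The non-emptiness of the resulting set is then extracted from the lift $\tilde{L}$ together with the hypothesis $C_{k,p}$ applied to $L\in{\mathcal G}_{k,p}$. Getting this compatibility of Nielsen classes under conjugation, controlling the location of the model's fixed point in the interior of $X$, and cleanly separating the trichotomy (2)/(3)/(back to (1)) according to the induced action on the invariant piece, is where the delicate work lies.
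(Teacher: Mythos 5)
Your high-level toolkit is the right one (Proposition \ref{pro:niel}, the Thurston decomposition of Proposition \ref{pro:tnf}, the dual tree, Proposition \ref{pro:lif2}, and Lemma \ref{lem:upt} together with $C_{k,p}$ for the lifts), but there are two genuine gaps. First, your claim that the points of $P(K,f)$ ``are carried by an invariant piece $X$ of type (2)'' is unjustified and false in general: the $G$-invariant piece $D$ singled out by the tree argument (which must moreover be chosen as an extremal vertex of the $G$-fixed subtree, a point you gloss over) may meet $K$ in an infinite set, i.e.\ be of type (1), in which case every element of $H$ is isotopic to the identity rel $K \cup \{\infty\}$ and the induced mapping class group gives you nothing. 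This is precisely the hard case, and the paper spends most of the proof on it: one passes to a compact set $K^{1} \subset K$ \emph{minimal} under $G$ (and later $K^{2} \subset K^{1}$ minimal under $L$) --- minimality is what forces ${\mathcal R}=\emptyset$, forces $K^{2} \subset \overline{E_{\infty}}$ so that bounded complementary components are discs, and forbids $f$ from fixing reducing curves --- and then runs a \emph{second} Thurston decomposition rel $K^{2}$ for the pair $(H,L)$, with a separate direct argument (via Lemma 2.12 of \cite{FHPs}) when $f$ preserves no essential finite-type subsurface, and a separate restriction-to-a-disc argument when $\mathrm{Fix}(f)$ meets the filled-in hull of $K^{2}$. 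Your proposal, which applies the decomposition once to $K$ itself with no minimality, cannot reach alternative (3) in this situation.

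Second, your argument for the ``Moreover'' in (3) has a gap at its central point. You argue that since $\phi f \phi^{-1} = \alpha f$ with $\alpha \in H$ agrees with $f$ pointwise on $\mathrm{Fix}(H)$, the conjugated Nielsen classes are identified. Pointwise agreement is not enough: whether $z$ lies in $P(K,\alpha f)$ rather than $P(K,f)$ depends on homotopy classes of paths, and the two can differ by the loop $t \mapsto \alpha_{t}^{-1}(z)$ traced by $z$ under an isotopy $\alpha_{t}$ from $Id$ to $\alpha$; this loop need not be trivial for a mere fixed point of $\alpha$. The paper handles this by introducing the identity-lift fixed point set $\mathrm{Fix}_{c}(H)$ (projections of fixed points of the identity lifts) and proving $P(K^{2},\alpha f) \cap \mathrm{Fix}_{c}(H) = P(K^{2},f) \cap \mathrm{Fix}_{c}(H)$ --- the triviality of that loop is exactly the condition $z \in \mathrm{Fix}_{c}(\alpha)$. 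Non-emptiness is then obtained by producing a point of $P(K^{2},f) \cap \mathrm{Fix}_{c}(H)$ (via Lemma \ref{lem:upt} applied to the lift $\tilde{L}$, whose restriction $\tilde{H}$ is the identity lift of $H$), and only afterwards deducing that this point lies in every $\phi \left( P(K,f) \cap \mathrm{Fix}(H) \right)$. Without the $\mathrm{Fix}_{c}$ device your invariance claim, and hence the non-emptiness, does not follow; this is also why the lemma asserts non-emptiness only of the intersection $\cap_{\phi \in G} \phi \left( P(K,f) \cap \mathrm{Fix}(H) \right)$ and not $G$-invariance of $P(K,f) \cap \mathrm{Fix}(H)$ itself.
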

Let $G= \langle Z^{(k)}(G), {\phi}_{1}, \hdots, {\phi}_{p+1} \rangle$ in ${\mathcal G}_{k,p+1}$.
Consider $a_{1} < a_{2} < \hdots < a_{k+1}$ and a system of generators
$\{g_{1},\hdots,g_{a_{k+1}}\}$ of $G$ such that
$Z^{(l)}(G) = \langle g_{1},\hdots,g_{a_{l}} \rangle$ for any $1 \leq l \leq k+1$,
$a_{k+1}-a_{k}=p+1$ and
$g_{a_{k}+1}=\phi_{1}$, $\hdots$, $g_{a_{k+1}}=\phi_{p+1}$.
We denote $G_{0}=\{Id\}$ and $G_{j}= \langle g_{1},\hdots,g_{j} \rangle$. The group
$G_{j}$ is normal in $G$ for any $0 \leq j < a_{k+1}$.
We define $J_{j}=G_{j}$ for $j<a_{k+1}-1$ and $J_{a_{k+1}-1}=G_{a_{k+1}-2}$ for
$j=a_{k+1}-1$.
\begin{lem}
\label{lem:aux2}
Let $K \subset \mathrm{Fix}(G_{j})$ be a non-empty compact
$G$-invariant subset for $j<a_{k+1}$. Suppose that $C_{k,p}$ holds true.
Then one of the following possibilities is satisfied:
\begin{enumerate}
\item $K \cap \mathrm{Fix}(G_{j+1}) \neq \emptyset$.
\item There exists a lift $\tilde{G}$ of $G$ such that
      $\tilde{G} = \langle Z^{(k)}(\tilde{G}),\tilde{\alpha}_{1},\hdots,\tilde{\alpha}_{p+1} \rangle$
      and $\mathrm{Fix}(\tilde{\alpha}_{l})$ is a non-empty compact set for any
      $1 \leq l \leq p+1$.
\item $\cap_{\phi \in G} \phi (P(K, g_{j+1}) \cap \mathrm{Fix}(J_{j}))$ is a non-empty
      compact $G$-invariant set. Moreover if $j = a_{k+1}-1$ then there exists a lift
      $\tilde{G}$ of $G$
      such that $\mathrm{Fix}(\tilde{\phi}_{p+1})$ is non-empty and compact.
\end{enumerate}
\end{lem}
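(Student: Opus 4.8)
The plan is to deduce Lemma~\ref{lem:aux2} from Lemma~\ref{lem:aux1} by specializing the pair $(H,f)$ to a term of the filtration $G_{0} \leq G_{1} \leq \cdots \leq G_{a_{k+1}}$ and the next generator, so that $L = \langle H, f \rangle$ becomes $G_{j+1}$. Before invoking Lemma~\ref{lem:aux1} I would record three group-theoretic facts. First, each $G_{j}$ with $j < a_{k+1}$ is normal in $G$ (already noted above). Second, the class of $g_{j+1}$ in $G/G_{j}$ is central: if $a_{l} < j+1 \leq a_{l+1}$ then $g_{j+1}$ represents an element of $Z^{(l+1)}(G)/Z^{(l)}(G) = Z(G/Z^{(l)}(G))$, and since $G_{j} \supseteq G_{a_{l}} = Z^{(l)}(G)$ centrality descends to $G/G_{j}$. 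Third, concerning membership in ${\mathcal G}_{k,\cdot}$: if $m \leq a_{k}$ then $G_{m} \subseteq Z^{(k)}(G)$ has nilpotency class at most $k$, so $Z^{(k)}(G_{m}) = G_{m}$ and $G_{m} \in {\mathcal G}_{k,0}$; and if $a_{k} < m \leq a_{k+1}$ then $Z^{(k)}(G) = Z^{(k)}(G) \cap G_{m} \subseteq Z^{(k)}(G_{m})$, whence $G_{m} = \langle Z^{(k)}(G_{m}), \phi_{1}, \ldots, \phi_{m-a_{k}} \rangle \in {\mathcal G}_{k,m-a_{k}}$. Combined with ${\mathcal G}_{k,m'} \subseteq {\mathcal G}_{k,m}$ for $m' \leq m$, these facts certify the hypotheses of Lemma~\ref{lem:aux1}.

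For $j < a_{k+1}-1$ I take $H = G_{j} = J_{j}$ and $f = g_{j+1}$, so that $L = G_{j+1}$. Since $j+1 \leq a_{k+1}-1$, the third fact gives $H, L \in {\mathcal G}_{k,p}$; normality and centrality come from the first two facts, and $K \subseteq \mathrm{Fix}(G_{j}) = \mathrm{Fix}(H)$ is the required $G$-invariant compact set. Lemma~\ref{lem:aux1} now applies and its three alternatives transcribe verbatim into those of Lemma~\ref{lem:aux2}: alternative (1) is $K \cap \mathrm{Fix}(G_{j+1}) \neq \emptyset$, alternative (2) is identical, and in alternative (3), because $L \in {\mathcal G}_{k,p}$, the set $\cap_{\phi \in G} \phi(P(K,g_{j+1}) \cap \mathrm{Fix}(G_{j}))$ is a non-empty compact $G$-invariant set, which is conclusion (3) of Lemma~\ref{lem:aux2} (its ``moreover'' clause being vacuous here).

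The critical step is $j = a_{k+1}-1$, where $g_{j+1} = \phi_{p+1}$, $G_{j+1} = G$, and $J_{j}$ is set to $G_{a_{k+1}-2}$ rather than $G_{j}$. Here a single application of Lemma~\ref{lem:aux1} cannot yield both clauses of conclusion (3). Choosing $H = G_{a_{k+1}-1}$ gives $L = G \notin {\mathcal G}_{k,p}$: alternatives (1) and (2) reproduce Lemma~\ref{lem:aux2}(1),(2), and alternative (3) supplies only the lift $\tilde{G}$ with $\mathrm{Fix}(\tilde{\phi}_{p+1})$ compact, i.e. the ``moreover'' clause. To produce the set $\cap_{\phi \in G} \phi(P(K,\phi_{p+1}) \cap \mathrm{Fix}(G_{a_{k+1}-2}))$ I would instead apply Lemma~\ref{lem:aux1} with $H = G_{a_{k+1}-2}$ and $f = \phi_{p+1}$; now $L = \langle G_{a_{k+1}-2}, \phi_{p+1} \rangle$ lies in ${\mathcal G}_{k,p}$ and (as $\phi_{p+1}$ is central modulo $G_{a_{k+1}-2} \supseteq Z^{(k)}(G)$) is normal in $G$, so the third alternative of this application delivers exactly that $G$-invariant set. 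This is precisely why the filtration value $J_{a_{k+1}-1}$ is lowered to $G_{a_{k+1}-2}$.

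The main obstacle is the reconciliation of the two trichotomies in this critical case: I must verify that the alternative realized by the second application co-occurs with the one realized by the first, so that the two clauses genuinely assemble into conclusion (3) rather than forcing conclusion (1) or (2). The priority structure of Lemma~\ref{lem:aux1} — its first alternative holding exactly when $K \cap \mathrm{Fix}(L) \neq \emptyset$ — should keep this manageable: once $K \cap \mathrm{Fix}(G) = \emptyset$ and no generator admits a lift with compact fixed-point set, both applications land in their third alternative. The residual routine points are that the lifts furnished by Lemma~\ref{lem:aux1} are defined over the universal cover of a component of ${\mathbb R}^{2} \setminus K'$ for a $G$-invariant compact $K' \subseteq K$, so the two lifts may be taken over a common such component, and that $C_{k,p}$, which Lemma~\ref{lem:aux1} consumes internally, holds by hypothesis.
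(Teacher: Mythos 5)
Your proof is correct and follows essentially the same route as the paper's: a single application of Lemma \ref{lem:aux1} with $H=G_{j}$, $f=g_{j+1}$ handles everything except the intersection-set clause when $j=a_{k+1}-1$, and that remaining clause is obtained by a second application with $H=J_{a_{k+1}-1}=G_{a_{k+1}-2}$, $f=g_{a_{k+1}}$, observing that once conclusions (1) and (2) fail one has $K\cap \mathrm{Fix}(g_{a_{k+1}})=\emptyset$ (because $K\subset \mathrm{Fix}(G_{a_{k+1}-1})$), so both applications land in their third alternative. Your preliminary verifications of normality, centrality and membership in ${\mathcal G}_{k,\cdot}$ simply make explicit hypotheses the paper leaves implicit.
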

Lemma \ref{lem:aux2} justifies the introduction of Statements $A_{k,p}$ and $B_{k,p}$.
In Cases (1) and (3) for $j < a_{k+1}-1$ there exists a compact non-empty
$G$-invariant set $F$ contained in $\mathrm{Fix}(G_{j+1})$.
We can continue the process by replacing
$K$ with $F$ and $G_{j}$ with $G_{j+1}$.
Otherwise either (2) holds true and $\tilde{G}$ satisfies the hypothesis in
Statement $A_{k,p+1}$ or (3) holds true for $j=a_{k+1}-1$ and $\tilde{G}$
satisfies the hypothesis in
Statement $B_{k,p+1}$. Lemma \ref{lem:aux1} implies
Lemma \ref{lem:aux2} and it is a little bit more general.
\begin{proof}[Proof of Lemma \ref{lem:aux1}]
We can suppose that $K \cap \mathrm{Fix}(L) = \emptyset$.
Consider a compact set $K^{1} \subset K$ that is minimal by the action of
$G$. We apply Proposition \ref{pro:tnf} to
$H_{1}=H$, $H_{2}=G$, $K_{1}=K^{1}$, $K_{2}=\emptyset$.
Consider the tree $\Gamma$ that is dual to ${\mathcal R}$ (cf. Definition \ref{def:dual}).
Then $G$ fixes the vertex that corresponds to the connected
component $D_{\infty}$ of $\infty$.
Consider the subtree $\Gamma'$ of $\Gamma$ of points fixed by $G$.
If $\Gamma' = \{D_{\infty}\}$ we define $D=D_{\infty}$.
Otherwise
there exists a vertex $D \neq D_{\infty}$ of $\Gamma'$ that belongs
to exactly one edge $\gamma$ of $\Gamma'$.

Suppose that $D \cap K^{1}$ is finite.
Let $\hat{D}$ be the marked sphere obtained from $\overline{D}$
by contracting $\overline{D} \cap {\mathcal A}$.
Consider $[]: G \to \mathrm{MCG}(G_{|\hat{D}})$. The
model for $[G]$ provided by Lemma \ref{lem:model}
has exactly a fixed point at $\overline{D} \cap ({\mathcal A} \cup \{\infty\})$
and it does not fix any other
puncture or curve in ${\mathcal A}$.
Hence it fixes an interior point by Remark \ref{rem:pt2}.
There exists a lift $\tilde{G}$ of $G$ to the
universal covering of
the connected component of
${\mathbb R}^{2} \setminus K^{1}$ containing $D \setminus K^{1}$
such that
$\tilde{G} = \langle Z^{(k)} (\tilde{G}),\tilde{\alpha}_{1},\hdots,\tilde{\alpha}_{p+1} \rangle$ and
$\mathrm{Fix}(\tilde{\alpha}_{l})$ is a non-empty compact set for any
$1 \leq l \leq p+1$ (Proposition \ref{pro:lif2}).
If $G=L$ the actions of $[G]$ and $[f]$ on punctures and reducing curves coincide.
Hence $\mathrm{Fix}(\tilde{f})$ is a non-empty compact set by Proposition \ref{pro:lif2}.

Suppose $D \cap K^{1}$ is an infinite set. The minimality implies
$K^{1} \subset D$ and ${\mathcal R} = \emptyset$.
In particular all the elements in $H$ are isotopic to the identity
rel $K^{1} \cup \{ \infty\}$.

Consider a compact set $K^{2} \subset K^{1}$ that is minimal by the action of
$L$.
Let $E_{\infty}$ be the unbounded connected component of
${\mathbb R}^{2} \setminus K^{2}$.
Consider the full compact set
$\hat{K}^{2} := {\mathbb R}^{2} \setminus E_{\infty}$.
Suppose that $\mathrm{Fix}(f) \cap \hat{K}^{2}$
is non-empty. Then there exists a bounded connected
component $D_{1}$ of ${\mathbb R}^{2} \setminus K^{2}$
such that $\mathrm{Fix}(f) \cap D_{1} \neq \emptyset$.
Moreover the minimality implies
$K^{2} \subset \overline{E_{\infty}}$, hence
$D_{1}$ is a topological disc.
The diffeomorphism $f$ has no fixed points in $\partial D_{1}$;
thus the set $\mathrm{Fix}(f_{|D_{1}})$ is compact.
Since $D_{1}$ is $L$-invariant, we
define $\tilde{L}$ as the group obtained by
restricting $L$ to $D_{1}$.
Moreover if $L \in  {\mathcal G}_{k,p}$ we
obtain that
$\mathrm{Fix}(L) \cap D_{1}$ is non-empty by Lemma \ref{lem:upt}.
Clearly $\mathrm{Fix}(L) \cap D_{1}$ and all its iterates by
elements of $G$ are contained in
$P(K^{1},f) \cap \mathrm{Fix}(H)$.

We suppose $\mathrm{Fix}(f) \cap \hat{K}^{2}= \emptyset$
from now on.
The set $P({K}^{2},f)$ is equal to $P(\hat{K}^{2},f)$, it is
a non-empty subset of $E_{\infty}$ by Proposition \ref{pro:niel}.
Since $\hat{K}^{2}$ is a full compact set, it is not connected; otherwise
we obtain $P(\hat{K}^{2},f) = \emptyset$.
In particular $K^{2}$ is not connected.

Let $\sigma: \tilde{E}_{\infty} \to E_{\infty}$ be the universal covering of
$E_{\infty}$.
Given $\phi \in H$ let ${\{\beta_{t}\}}_{t \in [0,1]}$
be an isotopy rel $K^{2}$ from
$\beta_{0}=Id$ to $\beta_{1}=\phi$.
Consider the lift ${\{\tilde{\beta}_{t}\}}_{t \in [0,1]}$
to $\tilde{E}_{\infty}$ such that $\tilde{\beta}_{0} =Id$.
We define $\tilde{\phi}=\tilde{\beta}_{1}$.
It is the identity lift of $\phi \in H$ to $\tilde{E}_{\infty}$;
it is uniquely defined
since the Euler characteristic of $E_{\infty}$ is negative.
We define
$\mathrm{Fix}_{c}(\phi)= \sigma (\mathrm{Fix}(\tilde{\phi}))$ and
$\mathrm{Fix}_{c}(H)= \cap_{\phi \in H} \mathrm{Fix}_{c}(\phi)$.
Let us prove that
$P(K^{2},f) \cap \mathrm{Fix}_{c}(H)$ is contained in
$\phi (P(K, f) \cap \mathrm{Fix}(H))$ for any $\phi \in G$.
Given $\phi \in G$ we have that $\alpha: = \phi f \phi^{-1} f^{-1}$ belongs to
$H$ by hypothesis.
We claim
\begin{equation}
\label{equ:con}
P(K^{2}, \alpha f) \cap \mathrm{Fix}_{c}(H) = P(K^{2}, f) \cap \mathrm{Fix}_{c}(H) .
\end{equation}
Indeed if an element $z$ of $\mathrm{Fix}_{c}(H) \cap \mathrm{Fix}(f)$ does not belong to
$P(K^{2}, \alpha f)$ then there exists a path $\rho$ from $z$ to $\infty$
in ${\mathbb S}^{2} \setminus K^{2}$
such that $\rho$ and $(\alpha f)(\rho)$ are homotopic relative to ends in
${\mathbb S}^{2} \setminus K^{2}$. Let ${\{ \alpha_{t} \}}_{t \in [0,1]}$ be
an isotopy rel $K^{1} \cup \{\infty\}$ such that $\alpha_{0}=Id$ and $\alpha_{1}=\alpha$.
We obtain that $\rho$ and
the path $\varsigma f(\rho)$ are homotopic relative to their ends
in ${\mathbb S}^{2} \setminus K^{2}$ where
$\varsigma:[0,1] \to {\mathbb R}^{2} \setminus K^{1}$ is defined by
$\varsigma(t)=\alpha_{t}^{-1}(z)$.
The class of $\varsigma$ is trivial in $\pi_{1}({\mathbb R}^{2} \setminus K^{2},z)$
since $z \in \mathrm{Fix}_{c}(\alpha)$.
Thus $z$ does not belong to $P(K^{2}, f)$.
Analogously we prove that $z  \in (\mathrm{Fix}_{c}(H) \cap  \mathrm{Fix}(f)) \setminus P(K^{2},f)$
implies $z \not \in P(K^{2}, \alpha f)$.

We deduce
\[   P(K^{2}, f) \cap \mathrm{Fix}_{c}(H) \subset P(\phi(K), \alpha f) \cap \mathrm{Fix}(H)
=\]
\[
P(\phi(K), \phi f \phi^{-1}) \cap \mathrm{Fix}(H) = \phi (P(K, f) \cap \mathrm{Fix}(H)) . \]
The inclusion is a consequence of equation (\ref{equ:con}) and $K^{2} \subset K$.
In order to prove that
$\cap_{\phi \in G} \phi (P(K, f) \cap \mathrm{Fix}(H))$ is non-empty it suffices to show
that $P(K^{2}, f) \cap \mathrm{Fix}_{c}(H)$ is non-empty.


We apply the Thurston decomposition
(Proposition \ref{pro:tnf}) to the group $L$ and
the marked surface $({\mathbb S}^{2},\{\infty\})$
($H_{1}=H$, $H_{2}=L$, $K_{1}=K^{2}$, $K_{2}=\emptyset$).
Let $D_{\infty}'$ be the connected component of
${\mathbb S}^{2} \setminus {\mathcal A}$ that contains $\infty$.
Suppose that $D_{\infty}' \cap K^{2}$ is a finite set. The minimality
implies that $f$ can not fix any curve in
$\overline{D_{\infty}'} \cap {\mathcal A}$.
Analogously as in the case $\sharp (D \cap K^{1}) < \infty$
Proposition \ref{pro:lif2} implies the existence of
a lift $\tilde{L}$ of $L$ to $\tilde{E}_{\infty}$
such that $\mathrm{Fix}(\tilde{f})$
is a non-empty compact set. Moreover $\sigma(\mathrm{Fix}(\tilde{f}))$
is contained in $P(K^{2},f)$.
We obtain $\mathrm{Fix}(\tilde{L}) \neq \emptyset$
in the case $L \in {\mathcal G}_{k,p}$ by Lemma \ref{lem:upt}.
The set $P(K^{2},f) \cap \mathrm{Fix}_{c}(H)$ is not empty
as it contains
$\sigma(\mathrm{Fix}(\tilde{L}))$.

Suppose that $D_{\infty}' \cap K^{2}$ is an infinite set.
In this case $K^{2}$ is contained in $D_{\infty}'$ by
minimality and ${\mathcal R}$ is empty.
We can suppose that $f$ does not preserve any essential
type subsurface $N \subset {\mathbb R}^{2} \setminus K^{2}$
modulo isotopy rel $K^{2}$.
Otherwise we replace ${\mathcal R}$ with $\partial N$.
The unbounded connected component of
${\mathbb R}^{2} \setminus \partial N$ does not contain any point of $K^{2}$
by minimality. We can argue as in the finite type case.

Let $z \in P(K^{2},f)$. Consider a lift $\tilde{z} \in \tilde{E}_{\infty}$
of $z$. We define $\tilde{f}$ as the lift
of $f$ to $\tilde{E}_{\infty}$ such that
$\tilde{f}(\tilde{z})=\tilde{z}$.
The map $f$ does not preserve any essential non-peripheral homotopy
class in
$\pi_{1}(E_{\infty},z)$. Otherwise $f$ preserves an essential finite
type subsurface $N \subset {\mathbb R}^{2} \setminus K^{2}$
modulo isotopy rel $K^{2}$ by Lemma
2.12 of \cite{FHPs}.
Moreover $f$ does not preserve any other element of  $\pi_{1}(E_{\infty},z)$
since $z \in P(K^{2},f)$.
Hence $\mathrm{Fix}(\tilde{f})$ is a non-empty compact set.
No iterate of $f$ belongs to $H$, otherwise $K^{2}$ is finite.
The mapping $\tau: L \to \langle f \rangle$ defined by
$\tau_{|H} \equiv Id$,   $\tau(f)=f$ is a well-defined homomorphism
of groups.
All the elements of $H$ are isotopic to the identity
rel $K^{2}$.
Given $\phi \in L$ let ${\{\beta_{t}\}}_{t \in [0,1]}$
be an isotopy rel $K^{2}$ from
$\beta_{0}=\tau(\phi)$ to $\beta_{1}=\phi$.
Consider the lift ${\{\tilde{\beta}_{t}\}}_{t \in [0,1]}$
to $\tilde{E}_{\infty}$ such that $\tilde{\beta}_{0}(\tilde{z})=\tilde{z}$.
We define $\tilde{\phi}=\tilde{\beta}_{1}$.
The group $\tilde{L}$
is a lift of $L$ to $\tilde{E}_{\infty}$ by the uniqueness of the construction.
Moreover $\tilde{H}$ is the identity lift
of $H$. Suppose that $L \in {\mathcal G}_{k,p}$.
Lemma \ref{lem:upt} implies
$\mathrm{Fix}(\tilde{L}) \neq \emptyset$.
The Nielsen class of the elements of $\sigma (\mathrm{Fix}(\tilde{f}))$
is the Nielsen class of $z$.
Thus $P(K^{2},f) \cap \mathrm{Fix}_{c}(H)$ is non-empty.
\end{proof}
\begin{proof}[Proof of Lemma \ref{lem:aux2}]
Lemma \ref{lem:aux1} for $H=G_{j}$ and $f=g_{j+1}$ implies all the properties except that
$\cap_{\phi \in G} \phi (P(K, g_{a_{k+1}}) \cap \mathrm{Fix}(J_{a_{k+1}}))$ is a non-empty
compact $G$-invariant set if $j=a_{k+1}-1$  and neither Case (1) nor Case (2)
holds.
In such a situation the result is a consequence of Lemma \ref{lem:aux1}
applied to $H=J_{a_{k+1}-1}$ and $f=g_{a_{k+1}}$
since $K \cap \mathrm{Fix}(g_{a_{k+1}}) = \emptyset$ and Case (2)
of Lemma \ref{lem:aux1} does not hold.
\end{proof}
Let us prove next $C_{k,p} \implies A_{k,p+1}$ and $C_{k,p} \implies B_{k,p+1}$.
The proof of $C_{k,p} \implies C_{k,p+1}$ is an easy consequence of these
results and Lemma \ref{lem:aux2}.
\begin{lem}
\label{lem:cia}
Statement $C_{k,p}$ implies Statement $A_{k,p+1}$ for all
$k \geq 0$, $p \geq 0$.
\end{lem}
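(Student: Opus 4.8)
The plan is to single out the last generator $\phi_{p+1}$ and reduce to the group generated by the remaining ones, in the spirit of Lemma \ref{lem:upt}; the role of the strong compactness hypothesis is to supply a \emph{compact} invariant set to which Lemma \ref{lem:aux1} can be applied. Set $H = \langle Z^{(k)}(G), \phi_{1}, \hdots, \phi_{p} \rangle$. Since $G$ has nilpotency class $\leq k+1$ the quotient $G/Z^{(k)}(G)$ is abelian, so $H$ is a normal subgroup of $G$, the class of $\phi_{p+1}$ in $G/H$ is central, $G = \langle H, \phi_{p+1} \rangle$ and $H \in \mathcal{G}_{k,p}$. The case $p=0$ is special: then $H = Z^{(k)}(G)$ and $\langle Z^{(k)}(G), \phi_{1} \rangle$ has nilpotency class $\leq k$, so the statement follows from $C_{k,0}$ together with Lemma \ref{lem:upt}. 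From now on I would assume $p \geq 1$.

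First I would manufacture a non-empty compact $G$-invariant subset of $\mathrm{Fix}(H)$. The group $H$ is finitely generated and preserves any non-empty compact set preserved by $G$, so $C_{k,p}$ gives $\mathrm{Fix}(H) \neq \emptyset$. As $p \geq 1$ we have $\mathrm{Fix}(H) \subseteq \mathrm{Fix}(\phi_{1})$, which is compact by hypothesis; hence $K := \mathrm{Fix}(H)$ is itself a non-empty compact set, and it is $G$-invariant because $H$ is normal in $G$. I would then apply Lemma \ref{lem:aux1} to $H$, $f = \phi_{p+1}$, $L = G$ and this $K$, all of whose hypotheses are met and for which $C_{k,p}$ is available. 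If its alternative (1) holds we are finished, since $K \cap \mathrm{Fix}(L) = \mathrm{Fix}(H) \cap \mathrm{Fix}(\phi_{p+1}) = \mathrm{Fix}(G)$ would then be non-empty.

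The difficulty lies in the two remaining alternatives. Each of them produces a lift $\tilde{G}$ of $G$ to the universal covering of a component of $\mathbb{R}^{2} \setminus K'$ (for some $G$-invariant compact $K' \subseteq K$) for which $\mathrm{Fix}(\tilde{\phi}_{p+1})$ is a non-empty compact set; and because here $L = G$, alternative (2) moreover makes \emph{all} the generators of $\tilde{G}$ have non-empty compact fixed point sets (Proposition \ref{pro:lif2}). Thus the lift $\tilde{G} \in \mathcal{G}_{k,p+1}$ again meets the hypotheses of the statement being proved, now on the surface $\tilde{U} \cong \mathbb{R}^{2}$, and one cannot close the argument by quoting the statement itself.

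I expect this to be the main obstacle, and I would resolve it by running the whole argument as an induction on the topological complexity of the pair formed by the ambient surface and a minimal invariant set $K^{1} \subseteq K$ (measured, say, by $-\chi$ of the pieces of the Thurston decomposition of Proposition \ref{pro:tnf}), reapplying $C_{k,p}$ to the lift of $H$ in order to restart the reduction on $\tilde{G}$. The crux is to show that passing to $\tilde{G}$ strictly lowers this complexity, so that the descent terminates and the final configuration necessarily falls into alternative (1), whence $\mathrm{Fix}(\tilde{G}) \neq \emptyset$ and therefore $\mathrm{Fix}(G) \neq \emptyset$. Establishing this strict decrease, together with the bookkeeping that the lifted group remains in $\mathcal{G}_{k,p+1}$ with all generators compactly fixed, is the delicate point; the latter follows from Proposition \ref{pro:lif2} exactly as in the proof of Lemma \ref{lem:aux1}.
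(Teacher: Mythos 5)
Your reduction ($H=\langle Z^{(k)}(G),\phi_1,\hdots,\phi_p\rangle$ normal, $K=\mathrm{Fix}(H)$ compact non-empty, apply Lemma \ref{lem:aux1} with $L=G$, $f=\phi_{p+1}$) is sound, and your diagnosis of the resulting circularity is exactly right: alternatives (2) and (3) of Lemma \ref{lem:aux1} hand you back a lift $\tilde{G}\in{\mathcal G}_{k,p+1}$ satisfying the hypotheses of the very statement $A_{k,p+1}$ you are trying to prove. But the resolution you propose is a genuine gap, not a technicality. The induction on ``topological complexity'' has no well-founded decreasing quantity behind it: the lift $\tilde{G}$ acts on the universal covering of a connected component of ${\mathbb R}^{2}\setminus K'$, which is again homeomorphic to ${\mathbb R}^{2}$, the lifted group is isomorphic to $G$, and the compact sets $\mathrm{Fix}(\tilde{\alpha}_j)$ can be topologically as complicated as the ones you started with. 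Since the ambient surface never changes and the Thurston pieces of the lifted configuration are not controlled by those downstairs, nothing forces your descent to terminate; the ``strict decrease'' you flag as the delicate point is precisely the missing proof, and there is no evident reason it should hold.

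The paper closes the argument by a different choice of invariant sets, one that makes any recursion unnecessary. It takes the \emph{two} drop-one-generator subgroups $H_1=\langle Z^{(k)}(G),\phi_1,\hdots,\phi_p\rangle$ and $H_2=\langle Z^{(k)}(G),\phi_1,\hdots,\phi_{p-1},\phi_{p+1}\rangle$, and sets $K_1=\mathrm{Fix}(H_1)$, $K_2=\mathrm{Fix}(H_2)$: the \emph{full} fixed point sets, both compact by the strong compactness hypothesis (each is a closed subset of some $\mathrm{Fix}(\phi_l)$) and non-empty by $C_{k,p}$. Since $G=\langle H_1,H_2\rangle$, it suffices to rule out $K_1\cap K_2=\emptyset$. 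Assuming disjointness, Proposition \ref{pro:tnf} is applied to the pair $(K_1,K_2)$, and in each case of the decomposition (finite or infinite intersection of $K_1\cup K_2$ with the invariant piece $D$) one uses $C_{k,p}$ --- via Proposition \ref{pro:lif2} together with Lemma \ref{lem:upt}, or via a lift preserving a compact set --- to produce a fixed point for the lift of $H_1$ or of $H_2$ over a region disjoint from $K_1$ (resp.\ $K_2$). Crucially, fixed points are only ever needed for the $p$-generator subgroups, never for the full lift $\tilde{G}$, so $C_{k,p}$ suffices; and because $K_i$ is the \emph{entire} fixed point set of $H_i$, projecting such a fixed point downstairs yields a fixed point of $H_i$ outside $\mathrm{Fix}(H_i)$ --- an outright contradiction. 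That is the idea your proof is missing: by making the invariant sets the full fixed sets of two subgroups that $C_{k,p}$ can handle, ``a fixed point somewhere new'' is absurd by definition, and the argument terminates in one step.
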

\begin{proof}
Denote
\[ H_{1} =   \langle Z^{(k)}(G), \phi_{1}, \hdots, \phi_{p} \rangle , \ \
H_{2} =   \langle Z^{(k)}(G), \phi_{1}, \hdots, \phi_{p-1}, \phi_{p+1} \rangle. \]
We define $K_{1} = \mathrm{Fix}(H_{1})$ and $K_{2} = \mathrm{Fix}(H_{2})$.

The hypothesis implies that $K_{1}$ and $K_{2}$ are compact. Moreover
$C_{k,p}$ implies that none of them is empty.
It suffices to prove that $K_{1} \cap K_{2} = \emptyset$
leads to a contradiction.

We apply Proposition \ref{pro:tnf} to
$H_{1}$, $H_{2}$.
Consider the tree $\Gamma$ that is dual to ${\mathcal R}$.
We define $D$ as in Lemma \ref{lem:aux2}.
It is a $G$-invariant (modulo isotopy rel $K_{1} \cup K_{2}$) component of
${\mathbb S}^{2} \setminus {\mathcal A}$
such that there is exactly one point or curve $\gamma$ fixed by
$G$ in the union of $\{\infty\} \cup (\overline{D} \cap {\mathcal A})$
modulo isotopy rel $K_{1} \cup K_{2}$.
If $D=D_{\infty}$ we define $D'={\mathbb R}^{2}$.
Otherwise
we define $D'$ as the connected component of ${\mathbb S}^{2} \setminus \gamma$
not containing $\infty$. We have $D \subset D'$.

Suppose $K_{1} \cap D' \neq \emptyset$ and  $K_{2} \cap D' \neq \emptyset$.
If $\sharp (K_{1} \cap D) = \infty$
then the elements in $H_{1}$ fix the curves in $\overline{D} \cap {\mathcal A}$
modulo isotopy rel $K_{1} \cup K_{2}$.
Since $\emptyset \neq K_{2} \cap D' \subset \mathrm{Fix}(H_{2})$, there exists a
curve in $\overline{D} \cap {\mathcal A}$
different than $\gamma$
and invariant by the group
$G= \langle H_{1},H_{2} \rangle$. This is impossible by the choice of $D$.
Hence $\sharp (K_{1} \cap D)=\infty$ implies
$K_{2} \cap D' = \emptyset$. Analogously $\sharp (K_{2} \cap D)=\infty$ implies
$K_{1} \cap D' = \emptyset$.
Either $(K_{1} \cup K_{2}) \cap D$ is finite or one of the sets
$K_{1} \cap D'$, $K_{2} \cap D'$ is empty.

Suppose that $(K_{1} \cup K_{2}) \cap D$ is a finite set.
Let $\hat{D}$ be the marked sphere obtained from $\overline{D}$
by contracting the curves in $\overline{D} \cap {\mathcal A}$.
Consider $[]: G \to \mathrm{MCG}(G_{|\hat{D}})$. The
model for $[G]$ provided by Lemma \ref{lem:model}
has exactly a fixed point at $\overline{D} \cap ({\mathcal A} \cup \{\infty\})$
and it does not fix any other
puncture or curve in ${\mathcal A}$.
Hence it fixes an interior point (Remark \ref{rem:pt2}).
Proposition \ref{pro:lif2}
implies that there exists a lift $\tilde{G}$ of $G$ to the
universal covering of the connected component of ${\mathbb R}^{2} \setminus (K_{1} \cup K_{2})$
containing $D \setminus (K_{1} \cup K_{2})$
such that
$\tilde{G} = \langle Z^{(k)} (\tilde{G}),\tilde{\alpha}_{1},\hdots,\tilde{\alpha}_{p+1} \rangle$ and
$\mathrm{Fix}(\tilde{\alpha}_{j})$ is a non-empty compact set for any
$1 \leq j \leq p+1$.
Lemma \ref{lem:upt} implies that $\mathrm{Fix} \langle Z^{(k)} (\tilde{G}),\tilde{\alpha}_{1} \rangle$
is a non-empty $\tilde{G}$-invariant compact set.
We obtain
$\mathrm{Fix} \langle Z^{(k)}(\tilde{G}), \tilde{\phi}_{1}, \hdots, \tilde{\phi}_{p} \rangle \neq \emptyset$
by Statement $C_{k,p}$. We have
\[ \emptyset \neq \mathrm{Fix} \langle Z^{(k)}(G), {\phi}_{1}, \hdots, {\phi}_{p} \rangle \cap
({\mathbb R}^{2} \setminus (K_{1} \cup K_{2})) = \emptyset . \]
It represents a contradiction.

Suppose $\sharp ((K_{1} \cup K_{2}) \cap D) = \infty$.
Then either $\sharp (K_{1} \cap D) = \infty$ and $K_{2} \cap D'= \emptyset$
or $\sharp (K_{2} \cap D) = \infty$ and $K_{1} \cap D'= \emptyset$.
Suppose that we are in the latter case without lack of generality.
Clearly we have $D \neq D_{\infty}$. Consider a lift
$\tilde{D}'$ of the disc $D'$ to the universal covering
$M$ of the connected component of
${\mathbb R}^{2} \setminus ((K_{1} \cup K_{2}) \setminus D')$ containing $D'$.
Let $\tilde{K}$ be the lift of $(K_{1} \cup K_{2}) \cap D'$ in $\tilde{D}'$.
There exists a unique lift $\tilde{G}$ of $G$ to $M$
such that $\tilde{\phi}(\tilde{K}) = \tilde{K}$ for any $\phi \in G$.
The group $\tilde{G}$ preserves the compact set $\tilde{K}$;
we obtain
$\mathrm{Fix} (\tilde{H}_{1}) \neq \emptyset$ and
$\mathrm{Fix} (\tilde{H}_{2}) \neq \emptyset$
by Statement $C_{k,p}$. We deduce
that the intersection of
$\mathrm{Fix} ({H}_{1})$
and ${\mathbb R}^{2} \setminus K_{1}$ is not empty; this is absurd.
\end{proof}
\begin{lem}
\label{lem:cib}
Statement $C_{k,p}$ implies Statement $B_{k,p+1}$ for all
$k \geq 0$, $p \geq 0$.
\end{lem}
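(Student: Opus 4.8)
The plan is to imitate the reduction used for Statement $A_{k,p+1}$ in Lemma~\ref{lem:cia}, but with the last generator $\phi_{p+1}$ singled out as the element that is central modulo the rest of the group, and to dispatch the two ``good'' cases of Lemma~\ref{lem:aux1} either directly or through the already established implication $C_{k,p}\implies A_{k,p+1}$ (Lemma~\ref{lem:cia}). First I would record the algebra. Put $H=\langle Z^{(k)}(G),\phi_1,\dots,\phi_p\rangle$. Since $G$ has nilpotency class $\leq k+1$ we have $Z^{(k+1)}(G)=G$, so $G/Z^{(k)}(G)$ equals its own centre and is abelian; hence $H$ is normal in $G$, the class of $f:=\phi_{p+1}$ in $G/H$ is central, $L:=\langle H,f\rangle=G$, and $H\in{\mathcal G}_{k,p}$ (using $Z^{(k)}(G)\subseteq Z^{(k)}(H)$). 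Because $G$ preserves a non-empty compact set, so does $H$, and Statement $C_{k,p}$ yields $\mathrm{Fix}(H)\neq\emptyset$; by hypothesis $\mathrm{Fix}(H)$ is compact, and by normality it is $G$-invariant. Thus $K:=\mathrm{Fix}(H)$ is a non-empty compact $G$-invariant subset of $\mathrm{Fix}(H)$, and I may assume $\mathrm{Fix}(G)=K\cap\mathrm{Fix}(f)=\emptyset$, arguing by contradiction.

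Next I would apply Lemma~\ref{lem:aux1} to $H$, $f$, $L=G$ and $K$. Possibility (1) is exactly $\mathrm{Fix}(G)\neq\emptyset$, contradicting the standing assumption. In Possibility (2) the resulting lift $\tilde G=\langle Z^{(k)}(\tilde G),\tilde\alpha_1,\dots,\tilde\alpha_{p+1}\rangle$ has $\mathrm{Fix}(\tilde\alpha_l)$ non-empty and compact for every $l$, so $\tilde G$ meets the hypotheses of Statement $A_{k,p+1}$; since $C_{k,p}\implies A_{k,p+1}$ by Lemma~\ref{lem:cia}, we get $\mathrm{Fix}(\tilde G)\neq\emptyset$, whence $\mathrm{Fix}(G)\neq\emptyset$, again a contradiction. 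The only surviving alternative is Possibility (3), where we merely obtain a lift with $\mathrm{Fix}(\tilde\phi_{p+1})$ compact and no $G$-invariant compact set is produced (the ``moreover'' clause needs $L\in{\mathcal G}_{k,p}$, which fails for $L=G$). This is where the real work lies.

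To extract usable information from Possibility (3) I would run Lemma~\ref{lem:aux1} a second time, now on the smaller subgroup $J=\langle Z^{(k)}(G),\phi_1,\dots,\phi_{p-1}\rangle$ with the same $f=\phi_{p+1}$, so that $L'=\langle J,f\rangle=\langle Z^{(k)}(G),\phi_1,\dots,\phi_{p-1},\phi_{p+1}\rangle$ now lies in ${\mathcal G}_{k,p}$; the hypotheses still hold because $K=\mathrm{Fix}(H)\subseteq\mathrm{Fix}(J)$ is a non-empty compact $G$-invariant set. Possibilities (1) and (2) of this second application again force $\mathrm{Fix}(G)\neq\emptyset$ (note $K\cap\mathrm{Fix}(L')=\mathrm{Fix}(H)\cap\mathrm{Fix}(f)=\mathrm{Fix}(G)$), while in Possibility (3) the condition $L'\in{\mathcal G}_{k,p}$ activates the ``moreover'' clause and yields a non-empty compact $G$-invariant set $F=\cap_{\phi\in G}\phi\bigl(P(K,f)\cap\mathrm{Fix}(J)\bigr)$ with $F\subseteq\mathrm{Fix}(J)\cap\mathrm{Fix}(f)=\mathrm{Fix}(L')$. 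Since $G=\langle L',\phi_p\rangle$ with $L'\in{\mathcal G}_{k,p}$ and $F$ a compact $G$-invariant subset of $\mathrm{Fix}(L')$, I am back in exactly the initial configuration, but with $\phi_p$ now in the distinguished role of $f$ and with $F$ in place of $K$. I would therefore iterate, cycling the distinguished generator through $\phi_{p+1},\phi_p,\phi_{p-1},\dots$ and replacing the invariant set by the successive sets $\cap_{\phi\in G}\phi(P(\cdot,\cdot)\cap\mathrm{Fix}(\cdot))$ furnished by the ``moreover'' clause.

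The main obstacle I expect is the termination of this descent: I must rule out that the process stays in Possibility (3) indefinitely, i.e. show that after finitely many steps it lands in Possibility (1) or (2), both of which contradict $\mathrm{Fix}(G)=\emptyset$. The difficulty is that, under the standing assumption, the successive compact $G$-invariant sets are pairwise disjoint rather than nested (each new $F$ lies in $\mathrm{Fix}(\cdot)$ of a newly chosen generator, hence misses the previous one), so termination cannot come from a naive inclusion argument. I anticipate it instead from the finiteness built into the $P(\cdot,\cdot)$-construction: Proposition~\ref{pro:niel} keeps every intermediate $P$-set a non-empty compact set, and the relevant Nielsen data is finite, so the construction must stabilize, exactly as the index $j$ is forced to terminate in the finite iterative scheme of Lemma~\ref{lem:aux2}. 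Once termination is secured the only remaining alternatives are Possibilities (1) and (2), each giving $\mathrm{Fix}(G)\neq\emptyset$ and contradicting the assumption $\mathrm{Fix}(G)=\emptyset$; this contradiction establishes Statement $B_{k,p+1}$.
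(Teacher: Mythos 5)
Your first stage is correct and is essentially the paper's own first step: the two applications of Lemma~\ref{lem:aux1} (first with $H=\langle Z^{(k)}(G),\phi_1,\hdots,\phi_p\rangle$ and $L=G$, then with the smaller group $J=\langle Z^{(k)}(G),\phi_1,\hdots,\phi_{p-1}\rangle$ so that $L'=\langle J,\phi_{p+1}\rangle\in{\mathcal G}_{k,p}$ activates the ``moreover'' clause) are exactly what Lemma~\ref{lem:aux2} packages, and the paper likewise disposes of Cases (1) and (2) via Lemma~\ref{lem:cia}. So you correctly reduce to the situation where $K_2:=\cap_{\phi\in G}\phi(P(K,\phi_{p+1})\cap\mathrm{Fix}(J))$ is a non-empty compact $G$-invariant subset of $\mathrm{Fix}(L')$ disjoint from $K=\mathrm{Fix}(H)$. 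The gap is everything after that. You propose to iterate Lemma~\ref{lem:aux1} with the distinguished generator cycling through $\phi_{p+1},\phi_p,\phi_{p-1},\hdots$, and you yourself flag the crux: one must show that the process leaves Possibility (3) after finitely many steps. You do not prove this; you only appeal to ``the finiteness built into the $P(\cdot,\cdot)$-construction'' and to an analogy with Lemma~\ref{lem:aux2}. The analogy does not transfer. Lemma~\ref{lem:aux2} terminates because its index $j$ strictly increases through the finite range $\{0,\hdots,a_{k+1}\}$, each new invariant set being nested inside $\mathrm{Fix}(G_{j+1})$ for an ascending chain of subgroups. In your cycle nothing is monotone: as you note, the successive compact sets are pairwise disjoint once $\mathrm{Fix}(G)=\emptyset$ is assumed, the subgroup whose fixed point set contains them changes at every step, and Proposition~\ref{pro:niel} only guarantees that each new $P$-set is non-empty and compact; it provides no finite resource that the iteration exhausts. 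Nothing rules out staying in Possibility (3) forever, so the contradiction is never reached and the proof is incomplete at its decisive point.

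The paper's proof of Lemma~\ref{lem:cib} does not iterate at all: having produced $K_2$, it feeds the disjoint pair $(K_1,K_2)$, with $K_1=\mathrm{Fix}(H_1)$, into the Thurston decomposition of Proposition~\ref{pro:tnf} (whose second hypothesis is tailored to exactly this situation) and then reruns the dual-tree and invariant-disc analysis of Lemma~\ref{lem:cia}. The only case not already covered by that analysis, namely $\sharp(K_1\cap D)=\infty$ and $K_2\cap D'=\emptyset$, is settled by applying Lemma~\ref{lem:aux2} once more, this time to $K_1\cap D'$: the resulting set $K_2'$ would have to lie both inside $P(K_1\cap D',\phi_{p+1})$ and outside $\overline{D'}$, which is impossible because $\partial D'$ is $\phi_{p+1}$-invariant modulo isotopy rel $K_1\cup K_2$, so fixed points outside $\overline{D'}$ have Nielsen classes rel $K_1\cap D'$ that peripherally contain $\infty$. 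The contradiction is thus geometric---a localization of the $P$-set inside an invariant disc---rather than the outcome of a terminating iteration. To repair your argument you would need to replace the cycling scheme by this (or some other) geometric mechanism; proving termination of the cycle itself does not look feasible for the reasons above.
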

\begin{proof}
We define $H_{1}$, $H_{2}$ and $K_{1}$ as in Lemma \ref{lem:cia}.
The set $K_{1}$ is compact by hypothesis.
We apply Lemma \ref{lem:aux2}.
If $K_{1} \cap \mathrm{Fix}(G)$ is not empty there is nothing to prove.
If the second Case is satisfied then Lemma \ref{lem:cia} implies
$\mathrm{Fix}(G) \neq \emptyset$.
So we can suppose that
\[ K_{2}:=
\cap_{\phi \in G} \phi (P(K_{1}, \phi_{p+1}) \cap \mathrm{Fix} \langle Z^{(k)}(G), \phi_{1}, \hdots, \phi_{p-1} \rangle) \]
is non-empty. We apply Proposition \ref{pro:tnf} to
$H_{1}$, $H_{2}$.

The remainder of the proof is exactly the same as in Lemma \ref{lem:cia} except for the
case when $\sharp (K_{1} \cap D) = \infty$ and $K_{2} \cap D'= \emptyset$.
We apply Lemma \ref{lem:aux2} to the compact $G$-invariant set
$K_{1} \cap D'$. If we are in the second Case we obtain
$\mathrm{Fix}(G) \neq \emptyset$ by
applying Lemma \ref{lem:cia}. We can suppose that
\[ K_{2}':= \cap_{\phi \in G} \phi \left(
P(K_{1} \cap D', {\phi}_{p+1}) \cap
\mathrm{Fix} \langle Z^{(k)}(G), {\phi}_{1},\hdots,{\phi}_{p-1} \rangle
\right) \]
is a non-empty compact $G$-invariant set contained in $K_{2}$.
We obtain
\[ K_{2}' = (K_{2}' \cap D') \cup (K_{2}' \setminus \overline{D'}) =
K_{2}' \setminus \overline{D'} \subset {\mathbb R}^{2} \setminus P(K_{1} \cap D', {\phi}_{p+1}) \]
since $\partial D'$ is invariant by $\phi_{p+1}$ modulo an isotopy
rel $K_{1} \cup K_{2}$.
This contradicts the property $K_{2}' \subset P(K_{1} \cap D', {\phi}_{p+1})$.
\end{proof}
%
%
%
\begin{lem}
\label{lem:cic}
Statement $C_{k,p}$ implies Statement $C_{k,p+1}$ for all
$k \geq 0$, $p \geq 0$.
\end{lem}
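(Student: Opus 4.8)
The plan is to organize the inductive descent described right after Lemma~\ref{lem:aux2} into a finite loop on the flag index~$j$. Let $G = \langle Z^{(k)}(G), \phi_{1}, \hdots, \phi_{p+1} \rangle$ be a group in ${\mathcal G}_{k,p+1}$ of nilpotency class $\leq k+1$ that preserves a non-empty compact set $K_{0} \subset {\mathbb R}^{2}$, and recall the normal flag $G_{0} = \{Id\} \leq G_{1} \leq \hdots \leq G_{a_{k+1}} = G$. Since $K_{0} \subset {\mathbb R}^{2} = \mathrm{Fix}(G_{0})$, I would initialize the process at $j=0$ with the non-empty compact $G$-invariant set $K = K_{0}$, and invoke Lemma~\ref{lem:aux2} repeatedly, which is legitimate because $C_{k,p}$ is the standing hypothesis.

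The heart of the argument is the case analysis of Lemma~\ref{lem:aux2} at each stage $j$, where we hold a non-empty compact $G$-invariant set $K \subset \mathrm{Fix}(G_{j})$. In Case~(1) the set $K \cap \mathrm{Fix}(G_{j+1})$ is non-empty, and since $G_{j+1}$ is normal in $G$ it is the intersection of a $G$-invariant compact set with a $G$-invariant closed set, hence again a non-empty compact $G$-invariant subset of $\mathrm{Fix}(G_{j+1})$. In Case~(3) with $j < a_{k+1}-1$ we have $J_{j}=G_{j}$ and the inclusion $P(K,g_{j+1}) \subset \mathrm{Fix}(g_{j+1})$, so $F := \cap_{\phi \in G} \phi(P(K,g_{j+1}) \cap \mathrm{Fix}(G_{j}))$ is a non-empty compact $G$-invariant subset of $\mathrm{Fix}(g_{j+1}) \cap \mathrm{Fix}(G_{j}) = \mathrm{Fix}(G_{j+1})$. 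In both of these subcases I would advance the loop, replacing $K$ by the new set and $j$ by $j+1$; since $j$ strictly increases and is bounded by $a_{k+1}$, this can occur only finitely often.

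The loop must therefore exit through a terminal case. If Case~(2) occurs at any stage, the resulting lift $\tilde{G} = \langle Z^{(k)}(\tilde{G}), \tilde{\alpha}_{1}, \hdots, \tilde{\alpha}_{p+1} \rangle$ has $\mathrm{Fix}(\tilde{\alpha}_{l})$ non-empty and compact for every $l$, so it meets the hypothesis of $A_{k,p+1}$; Lemma~\ref{lem:cia} gives $\mathrm{Fix}(\tilde{G}) \neq \emptyset$, and since $\mathrm{Fix}(\tilde{G}) \neq \emptyset$ implies $\mathrm{Fix}(G) \cap U \neq \emptyset$ for the $G$-invariant open set $U$ underlying the lift, we conclude $\mathrm{Fix}(G) \neq \emptyset$. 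Otherwise the loop reaches $j = a_{k+1}-1$, where all three cases finish: Case~(1) reads $K \cap \mathrm{Fix}(G_{a_{k+1}}) = K \cap \mathrm{Fix}(G) \neq \emptyset$ directly; Case~(2) is handled as above; and Case~(3) yields a lift with $\mathrm{Fix}(\tilde{\phi}_{p+1})$ non-empty compact. In this last situation, reindexing the top generators so that $\tilde{\phi}_{p+1}$ is listed first makes the subgroup spanned by $Z^{(k)}(\tilde{G})$ and all but the last generator contain $\tilde{\phi}_{p+1}$, so its closed fixed-point set sits inside the compact set $\mathrm{Fix}(\tilde{\phi}_{p+1})$ and is thus compact; hence $\tilde{G}$ satisfies the hypothesis of $B_{k,p+1}$ and Lemma~\ref{lem:cib} finishes the proof.

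I expect the delicate point to be the bookkeeping in the middle paragraph: at every pass one must check that the replacement set really lies in $\mathrm{Fix}(G_{j+1})$ and stays non-empty, compact and $G$-invariant, so that Lemma~\ref{lem:aux2} applies again. This rests precisely on the normality of each $G_{j}$ in $G$ and on $P(K,g_{j+1}) \subset \mathrm{Fix}(g_{j+1})$; once these invariants are secured, the remainder is a finite induction over the flag together with the already-proved implications $C_{k,p} \Rightarrow A_{k,p+1}$ and $C_{k,p} \Rightarrow B_{k,p+1}$.
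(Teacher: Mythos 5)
Your proposal is correct and follows essentially the same route as the paper: a finite ascent along the flag $G_{0} \leq G_{1} \leq \hdots \leq G_{a_{k+1}}$ driven by the trichotomy of Lemma \ref{lem:aux2}, with Case (2) closed by Lemma \ref{lem:cia} and Case (3) at the top index closed by Lemma \ref{lem:cib}. The only cosmetic differences are that the paper starts at the maximal index $j$ with $K \cap \mathrm{Fix}(G_{j}) \neq \emptyset$ (so Case (1) is excluded outright) while you start at $j=0$ and let Case (1) advance the loop, and that your explicit reindexing of the generators to meet the hypothesis of $B_{k,p+1}$ is a detail the paper leaves implicit.
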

\begin{proof}
Consider the notations above Lemma \ref{lem:aux2}.
Let $K$ be a compact non-empty $G$-invariant set.
We define
\[ j = \max \{ l \in \{0,\hdots,a_{k+1}\}: K \cap \mathrm{Fix}(G_{j}) \neq \emptyset \}. \]
If $j=a_{k+1}$ we are done.
We apply Lemma \ref{lem:aux2} to $K \cap \mathrm{Fix}(G_{j})$.
The first Case is impossible by the choice of $j$.
If the second Case is satisfied then
$\mathrm{Fix}(G)$ is non-empty by Lemma \ref{lem:cia}.
So we can suppose that we are in the third Case.
If $j < a_{k+1}-1$ then the set
$\cap_{\phi \in G} \phi (P(K, g_{j+1}) \cap \mathrm{Fix}(G_{j+1}))$
is a non-empty compact $G$-invariant set contained in
$\mathrm{Fix}(G_{j+1})$.
Up to repeat the argument a finite number of times we obtain
a non-empty compact $G$-invariant set $K_{1}$
contained in $\mathrm{Fix} \langle Z^{(k)}(G),\phi_{1},\hdots,\phi_{p} \rangle$.
We apply Lemma \ref{lem:aux2} to $K_{1}$.
Either $G$ has a global fixed point or the last Case is satisfied.
In the latter situation there exists a lift $\tilde{G}$ of $G$ such that
$\mathrm{Fix}(\tilde{\phi}_{p+1})$ is compact.
Lemma \ref{lem:cib} implies that $\tilde{G}$ (and then $G$) has a global fixed
point.
\end{proof}
\begin{rem}
Notice that since we already proved Theorem \ref{teo:plane}, the condition
requiring $C_{k,p}$ to be satisfied in Proposition \ref{pro:tnf} and
Lemmas \ref{lem:existw}, \ref{lem:upt}, \ref{lem:aux1}  and \ref{lem:aux2} is superfluous.
\end{rem}
\begin{proof}[Proof of Theorem \ref{teo:plane2}]
Let ${\mathcal L}$ be the set of finitely generated subgroups of $G$ containing $\phi$.
Since Theorem \ref{teo:plane} and Lemma \ref{lem:upt}
hold true, the result is obvious for any
$H \in {\mathcal L}$. The set $\mathrm{Fix}(H)$ is
compact since it is a closed subset of $\mathrm{Fix}(\phi)$.
Any finite intersection
$\mathrm{Fix}(H_{1}) \cap \hdots \cap \mathrm{Fix}(H_{m})$
of sets in the family ${\{ \mathrm{Fix}(H)\}}_{H \in {\mathcal L}}$
is non-empty since
$\langle H_{1},\hdots,H_{m} \rangle$ belongs to ${\mathcal L}$.
Therefore $\mathrm{Fix}(G)= \cap_{H \in {\mathcal L}} \mathrm{Fix}(H)$ is a non-empty
set.
\end{proof}
\begin{proof}[Proof of Corollary \ref{cor:d2}]
Consider the group ${\mathcal F}$ of restrictions to
${\mathbb S}^{1} = \partial {\mathbb D}^{2}$ of elements of $G$.
Since ${\mathcal F}$ is nilpotent, there exists
a ${\mathcal F}$-invariant probability measure $\mu$.
Given an element $f \in  {\mathcal F}$ such that $\mathrm{Fix} (f) \neq \emptyset$
the dynamics of $f$ in each interval of ${\mathbb S}^{1} \setminus \mathrm{Fix} (f)$
is conjugate to a translation. We obtain $\mathrm{supp}(\mu) \subset \mathrm{Fix} (f)$.
Thus either $\mathrm{supp}(\mu) \subset \mathrm{Fix} ({\mathcal F})$ and
there is nothing to prove or there exists $\phi \in G$ such that
$\mathrm{Fix} (\phi) \cap \partial {\mathbb D}^{2} = \emptyset$.
In the latter case $\mathrm{Fix} (\phi)$ is compact in
${\mathbb D}^{2} \setminus \partial {\mathbb D}^{2}$.
Thus $G$ has a global fixed point by Theorem \ref{teo:plane2}.
\end{proof}
\begin{pro}
Consider the setup in Proposition \ref{pro:lif2}.
Let ${\mathcal F}$ be a nilpotent subgroup of
$\mathrm{Diff}_{+}^{1}(S)$ such that
$\eta({\mathcal F})$ is a non-trivial irreducible group.
Consider the model $\Theta$ of $\eta({\mathcal F})$
provided by Lemma \ref{lem:model}.
Let $z$ be a fixed point of $\Theta$ in the interior of $M_{1}$.
Then there exists $x \in \mathrm{Fix}({\mathcal F}) \cap M$
such that the $\phi$-Nielsen class of $x$ rel $K$ is
non-empty, compactly covered and does not peripherally contain
any punctures for any $\phi \in {\mathcal F}$ with $\eta(\phi) \neq Id$.
  Moreover the $\phi$-Nielsen class of $x$ rel $K$
is determined by the $\theta_{\phi}$-Nielsen class of $z$
(cf. Definition \ref{def:det}) where
$\theta_{\phi}$ is an homeomorphism isotopic to $\phi$ rel $K$
such that $(\theta_{\phi})_{|M_{1}} \in \Theta$.
\end{pro}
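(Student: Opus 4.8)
The plan is to upgrade the lift $\tilde{\mathcal F}$ produced by Proposition~\ref{pro:lif2} to a genuine common fixed point of ${\mathcal F}$ inside $M$, by applying the planar fixed point theorem to $\tilde{\mathcal F}$, and then to recognise the Nielsen class of that fixed point as the class whose good properties were already recorded in Proposition~\ref{pro:lif1}. First I would recall the construction in Proposition~\ref{pro:lif2}: one fixes a lift $\tilde{z}$ of $z$ in $\tilde{M}_{1}$ and, for each $\phi \in {\mathcal F}$, lifts the isotopy from $\theta_{\phi}$ to $\phi$ starting at the lift $\tilde{\theta}_{\phi}$ of $\theta_{\phi}$ that fixes $\tilde{z}$; the endpoint is a lift $\tilde{\phi}$ of $\phi$ to the universal covering $\tilde{M}$ of $M$, and $\phi \mapsto \tilde{\phi}$ is a lift $\tilde{\mathcal F}$ of ${\mathcal F}$. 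For every $\phi$ with $\eta(\phi) \neq Id$ Proposition~\ref{pro:lif2} guarantees that $\mathrm{Fix}(\tilde{\phi})$ is a non-empty compact set.

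Because $M$ carries the essential subsurface $M_{1}$ of negative Euler characteristic it is not the sphere, so $\tilde{M}$ is diffeomorphic to ${\mathbb R}^{2}$ and $\tilde{\mathcal F}$ is a nilpotent subgroup of $\mathrm{Diff}_{+}^{1}({\mathbb R}^{2})$. Since $\eta({\mathcal F})$ is non-trivial there is some $\phi$ with $\eta(\phi) \neq Id$, and the corresponding $\tilde{\phi} \in \tilde{\mathcal F}$ has a non-empty compact fixed point set. I would then invoke Theorem~\ref{teo:plane2} to conclude $\mathrm{Fix}(\tilde{\mathcal F}) \neq \emptyset$, choose $\tilde{x} \in \mathrm{Fix}(\tilde{\mathcal F})$ and set $x = \pi(\tilde{x})$ for the covering $\pi : \tilde{M} \to M$. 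As $\tilde{\mathcal F}$ is a lift of ${\mathcal F}$, this yields $x \in \mathrm{Fix}({\mathcal F}) \cap M$.

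It remains to analyse the Nielsen class of $x$. For $\phi$ with $\eta(\phi) \neq Id$ the point $\tilde{x}$ is a lift of $x$ contained in $\mathrm{Fix}(\tilde{\phi})$, so $\tilde{\phi}$ is a lift of $\phi$ for the Nielsen class of $x$ rel $K$, and that class is the set of $\pi$-images of $\mathrm{Fix}(\tilde{\phi})$; compactness of $\mathrm{Fix}(\tilde{\phi})$ then shows directly that the class is compactly covered, and it is non-empty since it contains $x$. By the construction recalled above, $\tilde{\phi}$ is precisely the endpoint of the continuous lift of the isotopy from $\theta_{\phi}$ to $\phi$ starting at the lift $\tilde{\theta}_{\phi}$ of $\theta_{\phi}$ for the $\theta_{\phi}$-Nielsen class of $z$; hence, by Definition~\ref{def:det}, the Nielsen class of $x$ rel $K$ is the one determined by the $\theta_{\phi}$-Nielsen class of $z$ and this isotopy. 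This is the ``moreover'' assertion, and then Proposition~\ref{pro:lif1}, applied with the pseudo-Anosov or periodic non-trivial map $(\theta_{\phi})_{|M_{1}}$ and the fixed point $z$ in the interior of $M_{1}$, delivers that this class is non-trivial and does not peripherally contain any puncture.

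The point that needs care is the compatibility between the abstract common fixed point $\tilde{x}$ furnished by Theorem~\ref{teo:plane2} and the specific lift realising the Nielsen class determination of Definition~\ref{def:det}. This is automatic here: the lifts $\tilde{\phi}$ used in Proposition~\ref{pro:lif2} are built from those very isotopies, and $\tilde{x} \in \mathrm{Fix}(\tilde{\mathcal F})$ is simultaneously fixed by all of them, so no discrepancy between the choice of lift and the Nielsen class can occur. The only genuinely new input beyond Proposition~\ref{pro:lif2} is Theorem~\ref{teo:plane2}, so I would double-check that its hypotheses (nilpotency of $\tilde{\mathcal F}$ and the $C^{1}$ regularity of the lifts over ${\mathbb R}^{2}$) hold, the rest being a bookkeeping identification of Nielsen classes.
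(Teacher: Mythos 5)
Your proposal is correct and follows essentially the same route as the paper: lift ${\mathcal F}$ via Proposition \ref{pro:lif2}, obtain a common fixed point of $\tilde{\mathcal F}$ from Theorem \ref{teo:plane2}, and identify its Nielsen class with the one determined by $z$, whose properties come from Proposition \ref{pro:lif1}. The extra verifications you flag (that $\tilde{M}$ is a plane and that $\tilde{\mathcal F}$ is a nilpotent group of $C^{1}$ diffeomorphisms) are exactly the points the paper leaves implicit.
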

\begin{proof}
Proposition \ref{pro:lif2} provides a lift $\tilde{\mathcal F}$
of ${\mathcal F}$ to the universal covering $\sigma:\tilde{M} \to M$
of $M$. Suppose that $\eta(\phi)$ is not trivial, then
$\mathrm{Fix}(\tilde{\phi})$ is a non-empty compact set.
Moreover
$\sigma(\mathrm{Fix}(\tilde{\phi}))$ corresponds to the
$\theta_{\phi}$-Nielsen class rel $K$ that contains $z$.
Such a Nielsen class does not
peripherally contain any punctures (Proposition \ref{pro:lif1}).
It suffices to prove that $\mathrm{Fix}(\tilde{\mathcal F})$ is non-empty.
This is a consequence of Theorem \ref{teo:plane2}.
\end{proof}
\section{Existence of orbits of cardinality at most $2$ in the sphere}
Let us discuss briefly the proof of Theorem \ref{teo:sphere}.
It suffices to show the result for finitely generated nilpotent groups
since a family of compact sets satisfying the finite intersection property
has non-empty intersection. We will see that if Theorem \ref{teo:sphere}
holds true for groups in ${\mathcal G}_{k,p}$ then any group
$G \in {\mathcal G}_{k,p+1}$ has a finite orbit. The main ingredient of this
result is Lemma \ref{lem:fpr} below. Once the finite $G$-orbit ${\mathcal O}$
is fixed we consider the Thurston decomposition of $G$ in the marked
surface $({\mathbb S}^{2},{\mathcal O})$.
Roughly speaking we find an invariant connected component $M$ of the decomposition
and global fixed points for an irreducible nilpotent model of
a normal subgroup of index at most $2$ of $G$ (cf. Remark \ref{rem:pt2}).
Then we lift this result to the initial group by using
Proposition \ref{pro:lif2} and Theorem \ref{teo:plane}.

The following lemma is the generalization of Lemma 7.2 of \cite{FHPs}
to the nilpotent case.
\begin{lem}
\label{lem:fpr}
Let $G$ be a finitely generated nilpotent subgroup of
$\mathrm{Diff}_{+}^{1}({\mathbb S}^{2})$.
Consider a compact $G$-invariant set $X$.
Suppose that $\gamma$ is a simple closed
non-homotopically trivial curve in
${\mathbb S}^{2} \setminus X$.
Suppose that $\phi$ is isotopic rel $X$ to a homeomorphism
that preserves $\gamma$ and both components of
${\mathbb S}^{2} \setminus \gamma$ for any $\phi \in G$.
Then $G$ has a global fixed point.
\end{lem}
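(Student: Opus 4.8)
The plan is to cut $\mathbb{S}^2$ along $\gamma$, extract a genuinely $G$-invariant piece, and reduce to Theorem \ref{teo:plane}. First I would set up the pieces. The curve $\gamma$ bounds two open discs $D_1, D_2$ with $\gamma = \partial\overline{D_1} = \partial\overline{D_2}$, and since $\gamma$ is essential in $\mathbb{S}^2\setminus X$ neither disc is disjoint from $X$. Because $\gamma\cap X=\emptyset$, the sets $K_i := X\cap\overline{D_i}$ are non-empty compact, and they are \emph{honestly} $G$-invariant: for $\phi\in G$ a homeomorphism $\psi_\phi$ isotopic to $\phi$ rel $X$ preserving each $D_i$ agrees with $\phi$ on $X$, so $\phi(K_i)=\psi_\phi(K_i)=K_i$. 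If some $K_i$ is a single point, that point lies in $\mathrm{Fix}(G)$ and we are done; so I would assume each $K_i$ has at least two points, which guarantees that every subsurface of $D_i$ produced below has negative Euler characteristic.

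Next I would run the Thurston decomposition. Apply Proposition \ref{pro:tnf} to $G$ acting on $(\mathbb{S}^2,X)$, refining the reducing family so that $\gamma$ is one of the reducing curves; this is legitimate precisely because $\gamma$ and both its sides are $G$-invariant modulo isotopy rel $X$. In the dual tree (Definition \ref{def:dual}) the edge of $\gamma$ is $G$-fixed, and the both-sides hypothesis guarantees that $G$ does not swap its two endpoints, so the $D_1$-side subtree is a finite $G$-invariant tree. By Proposition \ref{pro:iso} the action on it has a fixed vertex, i.e. a connected component $M_1\subset D_1$ of the complement of the annular neighborhoods that is \emph{genuinely} $G$-invariant rather than merely invariant up to permutation. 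By condition (2) of Proposition \ref{pro:tnf} the induced image $\eta(G)$ in $\mathrm{MCG}(M_1)$ is an irreducible nilpotent group and $\chi(M_1)<0$.

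The core step is then the model-and-lift argument. If $\eta(G)$ is non-trivial, let $\Theta$ be the model of $\eta(G)$ supplied by Lemma \ref{lem:model}; by Remark \ref{rem:pt2} it has an interior global fixed point in $M_1$, since on a finitely punctured sphere both the torsion part and any pseudo-Anosov generator carry interior fixed points. Proposition \ref{pro:lif2} then produces a lift $\tilde{G}=\langle Z^{(k)}(\tilde{G}),\tilde{\alpha}_1,\dots,\tilde{\alpha}_p\rangle$ to the universal covering of the component $M$ of $\mathbb{S}^2\setminus X$ containing $M_1$, with each $\mathrm{Fix}(\tilde{\alpha}_j)$ a non-empty compact set. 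This covering is a copy of $\mathbb{R}^2$ and $\tilde{G}\subset\mathrm{Diff}_{+}^{1}(\mathbb{R}^2)$ is finitely generated nilpotent preserving the non-empty compact set $\mathrm{Fix}(\tilde{\alpha}_1)$, so Theorem \ref{teo:plane} gives $\mathrm{Fix}(\tilde{G})\neq\emptyset$, whence $\mathrm{Fix}(G)\cap M\neq\emptyset$ and the lemma follows.

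The main obstacle is the case where $\eta(G)$ is trivial on the invariant piece $M_1$ (for instance when $M_1$ meets $K_1$ in an infinite set, so condition (1) of Proposition \ref{pro:tnf} applies and every element of $G$ is isotopic to the identity rel $X$ on $M_1$). There Proposition \ref{pro:lif2} does not apply, and I would instead pass to the identity lift to the universal covering of $M$, which preserves the lift of the compact set $K_1\cap M$, and conclude again by Theorem \ref{teo:plane}. Checking that the selected dual-tree vertex is honestly $G$-invariant — where the hypothesis on both components of $\mathbb{S}^2\setminus\gamma$ is indispensable — and disposing of the degenerate annular and spherical pieces is where the bulk of the bookkeeping lies.
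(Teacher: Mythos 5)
Your first paragraph (the honest $G$-invariance of $K_i=X\cap\overline{D_i}$, and non-emptiness from essentiality of $\gamma$) is correct and is also the starting point of the paper's argument. But the machinery you build on top of it has genuine gaps. First, Proposition \ref{pro:tnf} cannot be applied ``to $G$ acting on $({\mathbb S}^2,X)$'' in the way you need: its hypotheses require the infinite parts of the marking to be of the form $K_1\subset\mathrm{Fix}(H_1)$, $K_2\subset\mathrm{Fix}(H_2)$ for \emph{normal subgroups} $H_1,H_2$, and its conclusion (1) on a piece meeting the marking in an infinite set only says that elements of \emph{those subgroups} act trivially up to isotopy there. Your $X$ is merely $G$-invariant --- its points are permuted, not fixed --- so the only admissible choice is $H_1=H_2=\{Id\}$, and then conclusion (1) is vacuous and conclusion (2) covers only pieces meeting $X$ in a finite set. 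Hence, when $X$ is infinite, your claim that $\eta(G)$ is irreducible on $M_1$, or alternatively that ``every element of $G$ is isotopic to the identity rel $X$ on $M_1$,'' has no support; the latter is generally false, since a homeomorphism isotopic to the identity rel $X$ must fix $X\cap M_1$ pointwise, while elements of $G$ may permute $X$. Your fallback is also internally broken: $M$ is a component of ${\mathbb S}^2\setminus X$, so $K_1\cap M=\emptyset$, and in any case preimages of compact sets in the universal covering of $M$ are typically non-compact --- this loss of compactness is exactly why the paper must route through the Nielsen-class machinery (Propositions \ref{pro:lif1} and \ref{pro:lif2}) whenever it uses that covering. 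Two further slips: Remark \ref{rem:pt2} does \emph{not} say the model has an interior fixed point; it gives a $2$-orbit or two fixed points, which may sit at punctures (contracted reducing curves or marked points). Extracting an interior fixed point requires the extremal-vertex argument of Lemmas \ref{lem:aux1} and \ref{lem:cia} (a fixed vertex incident to exactly one fixed edge, so that exactly one special puncture is fixed), not just ``a fixed vertex'' from Proposition \ref{pro:iso}. And $\mathrm{Fix}(\tilde{\alpha}_1)$ is not $\tilde{G}$-invariant in general (the $\tilde\alpha_j$ are not central), so you cannot feed it to Theorem \ref{teo:plane}; the correct tool at that point is Theorem \ref{teo:plane2} or Lemma \ref{lem:upt}.

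The idea you missed is that no Thurston decomposition is needed at all: the right choice of covering preserves compactness directly. The paper's proof takes a component $D$ of ${\mathbb S}^2\setminus\gamma$ and the universal covering $Y$ of the component of ${\mathbb S}^2\setminus(X\setminus D)$ containing $D$ --- removing only the part of $X$ on the \emph{far} side of $\gamma$. Since $D$ is a disc, it lifts homeomorphically to $\tilde{D}\subset Y$, and $X\cap D$ (compact because $X\cap\gamma=\emptyset$, and honestly $G$-invariant by your own first paragraph) lifts to a compact set $X_0\subset\tilde{D}$. The isotopy-rel-$X$ hypothesis, with both sides of $\gamma$ preserved, yields a canonical lift $\tilde{G}$ of $G$ to $Y$ with $\tilde{\phi}(X_0)=X_0$ for every $\tilde\phi\in\tilde{G}$: lift the preserving homeomorphism so that it fixes $\tilde{D}$, then lift the isotopy, and uniqueness (deck transformations move $\tilde{D}$ off itself) makes $\phi\mapsto\tilde\phi$ a homomorphism. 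Now $Y$ is a plane, $\tilde{G}$ is a finitely generated nilpotent group of $C^1$ diffeomorphisms preserving the non-empty compact set $X_0$, so Theorem \ref{teo:plane} gives $\mathrm{Fix}(\tilde{G})\neq\emptyset$, which projects to a global fixed point of $G$. This works uniformly, with no case analysis on the cardinality of $X$ and no mapping class group models.
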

\begin{proof}
Let $D$ be a connected component of ${\mathbb S}^{2} \setminus \gamma$.
Consider the universal covering $Y$ of the connected component containing $D$ of
${\mathbb S}^{2} \setminus (X \setminus D)$.
Let $\tilde{D}$ be a lift of $D$ to the topological disc $Y$.
We denote by $X_{0}$ the lift of $X \cap D$ to the disc $\tilde{D}$.
There exists a unique lift $\tilde{G}$ of $G$ to $Y$ such that
$\tilde{\phi}(X_{0}) = X_{0}$ for any $\tilde{\phi} \in \tilde{G}$.
The set $X_{0}$ is compact; there exists a global fixed point for
$\tilde{G}$ (and then for $G$) by Theorem \ref{teo:plane}.
\end{proof}
Cartwright and Littlewood \cite{Cart-Little} proved that an orientation
preserving homeomorphism of ${\mathbb R}^{2}$ that leaves a
compact nonseparating continuum (a compact connected set whose complementary
is also connected)
$K$ invariant leaves a point of $K$ fixed (cf. \cite{Brown:CL} for a short proof).
We generalize Cartwright-Littlewood theorem for nilpotent subgroups of
$\mathrm{Diff}_{+}^{1}({\mathbb R}^{2})$.
\begin{proof}[Proof of Theorem \ref{teo:carlit}]
Suppose that the result is false.
There exists a simple closed curve $\gamma$ separating $K$ and $\mathrm{Fix}(G) \cup \{\infty\}$
in ${\mathbb S}^{2}$. Analogously as in Lemma \ref{lem:fpr}
we obtain a lift $\tilde{G}$ of $G$ to the universal covering of the connected component of
${\mathbb R}^{2} \setminus \mathrm{Fix}(G)$ containing $K$ such that
the lift preserves a compact set (a lift of $K$). Theorem \ref{teo:plane} implies
$\mathrm{Fix}(\tilde{G}) \neq \emptyset$ and then
$\mathrm{Fix}(G) \cap ({\mathbb R}^{2} \setminus \mathrm{Fix}(G)) \neq \emptyset$.
We obtain a contradiction.
\end{proof}
\begin{proof}[Proof of Theorem \ref{teo:sphere}]
It suffices to show the result for finitely generated subgroups.
A finitely generated nilpotent subgroup is polycyclic, so every
subgroup is finitely generated (cf. \cite[Theorem 2.7, p. 31]{Raghu}).
The theorem is proved by induction on the class of nilpotency $k$
of $G$. It is obvious for $k=0$.
Suppose that the result holds for groups whose nilpotency class is
less than or equal to $k$.
We will prove that if the theorem holds true for groups
in ${\mathcal G}_{k,p}$ then it is also satisfied for groups
in ${\mathcal G}_{k,p+1}$.
The result is obviously true for $p=0$ and $p=1$.

Let $G= \langle Z^{(k)}(G), \phi_{1}, \hdots, \phi_{p+1} \rangle$.
Our first goal is finding a finite orbit for $G$.
The groups
\[ G_{1} =  \langle Z^{(k)}(G), \phi_{1}, \hdots, \phi_{p} \rangle, \ \ \
G_{2} =  \langle Z^{(k)}(G), \phi_{p+1} \rangle \]
are normal in $G$ and have finite orbits of cardinality at most $2$
by the induction hypothesis. We define
\[ H_{1} = \langle \phi^{2} : \phi \in G_{1} \rangle , \ \ \
H_{2} =  \langle \phi^{2} : \phi \in G_{2} \rangle . \]
Notice that  the set $\{  \phi^{2} : \phi \in G_{j} \}$ is not
necessarily a subgroup of $G_{j}$.
The group $H_{j}$ is a characteristic subgroup of $G_{j}$ and then a normal subgroup of
$G$. Moreover $|G_{j}:H_{j}| < \infty$ since $G_{j}$ is polycyclic
(cf.  \cite[Lemma 4.1, p. 57]{Raghu}) for $j \in \{1,2\}$.
The sets $\mathrm{Fix}(H_{1})$ and $\mathrm{Fix}(H_{2})$
are non-empty by the existence of orbits of cardinality at most $2$ for
$G_{1}$ and $G_{2}$.
If $\mathrm{Fix}(H_{1}) \cap \mathrm{Fix}(H_{2}) \neq \emptyset$ then
$\langle H_{1},H_{2} \rangle$ has a global fixed point. Hence $G$ has a finite orbit of
cardinality at most $|G_{1}:H_{1}| |G_{2}:H_{2}|$.

Suppose $\mathrm{Fix}(H_{1}) \cap \mathrm{Fix}(H_{2}) = \emptyset$.
Let ${\mathcal R}$ be the set of reducing curves provided by Proposition
\ref{pro:tnf} ($K_{1}=\mathrm{Fix}(H_{1})$, $K_{2}=\mathrm{Fix}(H_{2})$).
If ${\mathcal R} \neq \emptyset$ there exists a finite index normal subgroup $H$
of $G$ fixing all the curves in ${\mathcal R}$ and the connected components
in ${\mathbb S}^{2} \setminus {\mathcal R}$ (modulo isotopy).
Lemma \ref{lem:fpr} implies that $H$ has a global fixed point, thus
$G$ has a finite orbit.
If ${\mathcal R} = \emptyset$ then $\sharp (\mathrm{Fix}(H_{1}) \cup \mathrm{Fix}(H_{2})) < \infty$
and $\mathrm{Fix}(H_{1})$ is a union of finite orbits of $G$.

Consider a finite $G$-orbit ${\mathcal O}$.
Let us apply Thurston normal form theorem to the group $[G]$
generated by $G$ in the mapping class group of the marked
surface $({\mathbb S}^{2},{\mathcal O})$.
Consider the tree $\Gamma$ that is dual to ${\mathcal R}$ (cf. Definition \ref{def:dual}).
The action of $G$ on $\Gamma$ has a fixed point $\tau$
(Proposition \ref{pro:iso}).

If $\tau$ is and edge we can consider a normal subgroup $H$ of $G$ of
index at most $2$ such that $H$ fixes both sides of
${\mathbb S}^{2} \setminus \tau$. The group $H$ has a global fixed point
by Lemma \ref{lem:fpr}. Hence $G$ has
either a global fixed point or a two-points orbit.

We can suppose that $G$ fixes a vertex $D$ of $\Gamma$.
Let $\hat{D}$ be the marked sphere obtained from $\overline{D}$
by contracting the curves in $\overline{D} \cap {\mathcal A}$.
Consider $[]: G \to \mathrm{MCG}(G_{|\hat{D}})$; the
model for $[G]$ provided by Lemma \ref{lem:model}
has a normal subgroup $L$ of
index at most $2$ that fixes $2$ points (Remark \ref{rem:pt2}).

We claim that the normal subgroup $H:=[]^{-1}(L)$
of index at most $2$ of $G$ has a global fixed point.
If $L$ fixes a curve in $\overline{D} \cap {\mathcal A}$
it is a consequence of Lemma \ref{lem:fpr}.
If $L$ fixes a puncture there is nothing to prove.
We can suppose that $L$ fixes an interior point.
There exists a lift
$\tilde{H}$ of $H$ to the universal covering of
${\mathbb S}^{2} \setminus {\mathcal O}$
such that the fixed point set of some element of
$\tilde{H}$ is a non-empty compact set by Proposition \ref{pro:lif2}.
Hence $\tilde{H}$ has a global fixed point by
Theorem \ref{teo:plane2}.
We obtain a global fixed point for $H$ and a finite
orbit of cardinality at most $2$ for $G$.
\end{proof}
\section{Fixed-point-free groups}
\label{section:fpf}
It is clear that the non-existence of a global fixed point for a subgroup
$G$ of $\mathrm{Diff}_{+}^{1}({\mathbb S}^{2})$ is related to torsion phenomena
(cf. Remark \ref{rem:pt2}).
So it is interesting to identify properties that hold true for  finite nilpotent
groups and to extend them for general nilpotent groups.
For instance a finite fixed-point-free nilpotent group is a dihedral
group $D_{2^{n}}$ for some $n \in {\mathbb N}$.
This group has a $2$-orbit, two $2^{n}$-orbits and every other orbit is a $2^{n+1}$-orbit.
The generalization of this property
is provided by Theorem \ref{teo:odd}: every finite orbit of a
fixed-point-free nilpotent subgroup
of $\mathrm{Diff}_{+}^{1}({\mathbb S}^{2})$ has an even number of elements.
\begin{rem}
The cardinal of every orbit of a finite
fixed-point-free nilpotent subgroup of $\mathrm{Homeo}_{+}({\mathbb S}^{2})$
is always a power of $2$.
This property does not hold for
general fixed-point-free nilpotent subgroups of $\mathrm{Diff}_{+}^{1}({\mathbb S}^{2})$.
Let $h:(0,\infty) \to {\mathbb R}$ be a $C^{\infty}$ function such that
$0 \in h(0,\infty)$, $h^{-1}(\pi)$ is a neighborhood of $0$ and $\infty$
and $h(r) + h(1/r) = 2 \pi$ for any $r \in (0,\infty)$.
We define $f(r,\theta)=(r,\theta + h(r))$ and $g(r,\theta)= (1/r,-\theta)$
in polar coordinates in ${\mathbb R}^{2}$.
The maps $f$ and $g$ extend to orientation-preserving $C^{\infty}$ diffeomorphisms of ${\mathbb S}^{2}$.
The group $G:=\langle f,g \rangle$ is abelian since $h(r) + h(1/r) \equiv 2 \pi$.
Moreover $G$ has no global fixed points but it has finite orbits of every even cardinality.
The set $\{ (1,0), (1, \pi) \}$ is a $2$-orbit.
Given $k \geq 2$ there exists $r_{0} \in (0,1)$ such that $h(r_{0}) = 2 \pi/k$.
Hence every point of the form $(r_{0},\theta)$ belongs to a orbit of cardinality
$2 k$.
\end{rem}
\begin{proof}[Proof of Theorem \ref{teo:odd}]
It suffices to prove the theorem for any finitely generated subgroup $H$ of $G$.
The set $F$ is a union of orbits of $H$, so $H$ has an orbit of odd cardinality.
We can suppose without lack of generality that $G$ is finitely generated and
that $F$ is a $G$-orbit. We also suppose that $\sharp F \geq 3$, otherwise
there is nothing to prove.

We apply the Thurston decomposition (Proposition \ref{pro:tnf})
to the marked surface $({\mathbb S}^{2},F)$.
Consider the dual tree $\Gamma$ (cf. Definition \ref{def:dual}).
Either there exists a connected component $D$ of ${\mathbb S}^{2} \setminus {\mathcal A}$
that is $G$-invariant modulo isotopy rel $F$ or there exists an invariant element
$\gamma$ of ${\mathcal R}$ such that the components of ${\mathbb S}^{2} \setminus {\mathcal R}$
are permuted by $G$. The latter situation is impossible since
$\sharp F$ is odd.

Consider the surface $\hat{D}$ obtained by contracting the curves
in $\overline{D} \cap {\mathcal A}$.
If $D \cap F \neq \emptyset$ then $F$ is a subset of $D$ and there are no reducing
curves. In particular we have $\hat{D} = ({\mathbb S}^{2},F)$. We denote $\hat{F}=F$.
If $D \cap F = \emptyset$ we define $\hat{F}$ as the set of points in $\hat{D}$
corresponding to curves in $\overline{D} \cap {\mathcal A}$.
Consider a curve $\gamma$ in $\overline{D} \cap {\mathcal A}$.
The connected component $D_{\gamma}$ of ${\mathbb S}^{2} \setminus \gamma$
such that $D_{\gamma} \cap D = \emptyset$ contains $p \geq 2$ points.
The action of $G$ on $F$ is transitive, hence $p$ does not depend on $\gamma$
and $\sharp F = p q$ where $q \geq 3$ is the cardinal of $\hat{F}$.
We deduce that $\sharp \hat{F}$ is odd.

Let $[]: G \to \mathrm{MCG}(\hat{D})$ be the canonical projection.
Consider a nilpotent model $G_{0}$ of $[G]$ whose elements are
all irreducible (Lemma \ref{lem:model}).
By construction $\hat{F}$ is an orbit of odd cardinality
of $G_{0}$. Let us show that $G_{0}$
has a global fixed point.
The group $G_{0}$ is of the form $\langle \mathrm{Tor}(G_{0}), f_{0} \rangle$ where
$\mathrm{Tor}(G_{0})$ is the normal subgroup of finite order elements of
$G_{0}$ and either $f_{0} \equiv Id$ or $f_{0}$ is pseudo-Anosov (Lemma \ref{lem:model}).
The existence of a global fixed point is obvious if
$\mathrm{Tor}(G_{0}) = \{Id\}$.

Suppose $\mathrm{Tor}(G_{0}) \neq \{Id\}$. Thus there exists
$h \in \mathrm{Tor}(G_{0}) \cap Z^{(1)} (G_{0})$ with $h \neq Id$.
If the two points in $\mathrm{Fix} (h)$ are not fixed points of $G_{0}$
then there exists $h' \in G_{0}$ permuting the points in $\mathrm {Fix}(h)$
(and commuting with $h$). A theorem of Handel (cf. Lemma \ref{lem:Handel})
implies $h^{2} \equiv Id$. The set $\hat{F}$ is a union of  orbits of $h$.
Since $\sharp \hat{F}$ is odd, it contains exactly an element of $\mathrm {Fix}(h)$.
Clearly it is a global fixed point.

Let us remark that there are no fixed points of
$G_{0}$ in $\hat{F}$.
Since $[G] \neq \{Id\}$ (the action on punctures and reducing curves is not trivial),
there exists a lift $\tilde{G}$ of $G$ to the universal covering of
${\mathbb S}^{2} \setminus F$ with
an element $\tilde{\alpha} \in \tilde{G}$ satisfying that
$\mathrm{Fix}(\tilde{\alpha})$ is non-empty and compact (Proposition \ref{pro:lif2}).
Hence Theorem \ref{teo:plane2} implies that
$\mathrm{Fix}(\tilde{G})$ (and then $\mathrm{Fix}(G)$) is not empty.
\end{proof}
\subsection{Conditions on $2$-orbits}
\label{subsec:cond}
We study the structure of the $2$-orbits of a nilpotent subgroup
$G$ of $\mathrm{Diff}_{+}^{1}({\mathbb S}^{2})$ and no global fixed
points.

Let us begin the section with some examples of finite groups.
Consider the subgroup $\langle 1/z, \lambda z \rangle$ of
$\mathrm{PGL}(2,{\mathbb C})$ where $\lambda$ is a primitive
$2^{n}$th root of unity. This group is isomorphic to the dihedral group
$D_{2^{n}}$.
Every finite fixed-point-free nilpotent group of orientation-preserving homeomorphisms
is topologically conjugated to $D_{2^{n}}$ for some
$n \in {\mathbb N}$ (cf. Remark \ref{rem:pt2}).
The group $D_{2}$ is commutative and has three $2$-orbits
$\{0, \infty\}$, $\{1,-1\}$, $\{i,-i\}$.
The group $D_{2^{n}}$ is not commutative,
has nilpotency class equal to $n$ and
a unique $2$-orbit
$\{0, \infty\}$ for any $n \geq 2$. We see in this section that nilpotent
subgroups of $\mathrm{Diff}_{+}^{1}({\mathbb S}^{2})$ with no global fixed
points share analogous properties.
\begin{defi}
Consider $2$-orbits ${\mathcal O}_{1}$ and ${\mathcal O}_{2}$ of a
subgroup $G$ of $\mathrm{Homeo}_{+}({\mathbb S}^{2})$.
We say that $[{\mathcal O}_{1}]=[{\mathcal O}_{2}]$ if we have
\[ \phi_{|{\mathcal O}_{1}} \equiv Id \Leftrightarrow \phi_{|{\mathcal O}_{2}} \equiv Id \]
for any $\phi \in G$.
\end{defi}
\begin{defi}
\label{def:tau}
Consider $2$-orbits ${\mathcal O}_{1}$, $\hdots$, ${\mathcal O}_{n}$  of a
subgroup $G$ of $\mathrm{Homeo}_{+}({\mathbb S}^{2})$.
The mapping
\[
\begin{array}{ccccc}
\tau & : & G & \to & ({\mathbb Z}/2 {\mathbb Z})^{n} \\
& & \phi & \mapsto & (a_{1},\hdots,a_{n}) \\
\end{array}
\]
where $a_{j}=0$ if and only if $\phi_{|{\mathcal O}_{j}} \equiv Id$ is a morphism of groups.
We say that ${\mathcal O}_{1}$, $\hdots$, ${\mathcal O}_{n}$ are {\it independent} if the action of $G$ on
${\mathcal O}_{1} \cup \hdots \cup {\mathcal O}_{n}$ is
$({\mathbb Z}/2 {\mathbb Z})^{n}$, i.e. if $\tau(G)$ is equal to $({\mathbb Z}/2 {\mathbb Z})^{n}$.
\end{defi}
\begin{rem}
If the classes of $2$-orbits ${\mathcal O}_{1}$, ${\mathcal O}_{2}$
are different ($[{\mathcal O}_{1}] \neq [{\mathcal O}_{2}]$) then the action of
$G$ on ${\mathcal O}_{1} \cup {\mathcal O}_{2}$ is
${\mathbb Z}/2 {\mathbb Z} \times {\mathbb Z}/2 {\mathbb Z} = D_{2}$.
\end{rem}
\begin{rem}
Two different classes of $2$-orbits are always independent.
This is not the case for $3$ classes of $2$-orbits.
For instance the subgroup $D_{2}= \langle 1/z,-z \rangle$ of
$\mathrm{PGL}(2,{\mathbb C})$ has three $2$-orbits
whose classes are pairwise different
but they are not independent since the action of $G$
on the union of the orbits is $D_{2}$.
\end{rem}
\begin{lem}
\label{lem:4t3i}
Consider $2$-orbits ${\mathcal O}_{1}$, ${\mathcal O}_{2}$, ${\mathcal O}_{3}$, ${\mathcal O}_{4}$
of a subgroup $G$ of $\mathrm{Homeo}_{+}({\mathbb S}^{2})$ whose classes are pairwise
different. Then there are three of them that are independent.
\end{lem}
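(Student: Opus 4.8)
The plan is to recast everything as elementary linear algebra over the field $\mathbb{Z}/2\mathbb{Z}$. For each $2$-orbit $\mathcal{O}_j$ the action of $G$ on its two points yields a homomorphism $w_j : G \to \mathbb{Z}/2\mathbb{Z}$ with $w_j(\phi)=0$ if and only if $\phi_{|\mathcal{O}_j} \equiv Id$; these are the coordinate maps of the morphism $\tau$ of Definition \ref{def:tau}. Because $\mathcal{O}_j$ is a genuine orbit, some element of $G$ interchanges its two points, so each $w_j$ is nonzero. I would view $w_1, w_2, w_3, w_4$ as vectors of the $\mathbb{Z}/2\mathbb{Z}$-vector space $V := \mathrm{Hom}(G, \mathbb{Z}/2\mathbb{Z})$.

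First I would set up a short dictionary. A nonzero homomorphism to $\mathbb{Z}/2\mathbb{Z}$ is determined by its kernel, so $[\mathcal{O}_i] = [\mathcal{O}_j]$ is equivalent to $w_i = w_j$; hence the hypothesis that the four classes are pairwise different says precisely that $w_1, w_2, w_3, w_4$ are four pairwise distinct nonzero vectors of $V$. Next, for any three indices the orbits $\mathcal{O}_i, \mathcal{O}_j, \mathcal{O}_k$ are independent, i.e. $(w_i, w_j, w_k): G \to (\mathbb{Z}/2\mathbb{Z})^3$ is onto, if and only if no nontrivial relation $c_i w_i + c_j w_j + c_k w_k = 0$ holds in $V$ --- that is, if and only if $w_i, w_j, w_k$ are linearly independent over $\mathbb{Z}/2\mathbb{Z}$. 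This equivalence follows because the image of $(w_i,w_j,w_k)$ is an $\mathbb{Z}/2\mathbb{Z}$-subspace of $(\mathbb{Z}/2\mathbb{Z})^3$, and it fails to be everything exactly when it lies in the kernel of some nonzero functional $(c_i,c_j,c_k)$.

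With this dictionary the lemma reduces to the purely linear statement: among four distinct nonzero vectors of a $\mathbb{Z}/2\mathbb{Z}$-vector space, some three are linearly independent. To prove it I would observe that a subspace of dimension at most $2$ over $\mathbb{Z}/2\mathbb{Z}$ contains at most three nonzero vectors, so $w_1, w_2, w_3, w_4$ must span a subspace $U$ with $\dim U \geq 3$. Since a spanning family contains a basis, some sub-collection of the $w_j$ is a basis of $U$, of size $\dim U \geq 3$; any three members of that basis are linearly independent, and the corresponding three orbits are independent by the dictionary. The only step needing genuine care is the equivalence between independence of $2$-orbits in the sense of Definition \ref{def:tau} and $\mathbb{Z}/2\mathbb{Z}$-linear independence of the associated homomorphisms; once that is in place the conclusion is the pigeonhole remark that four distinct nonzero vectors cannot all lie in a plane over $\mathbb{Z}/2\mathbb{Z}$.
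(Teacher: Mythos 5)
Your proof is correct, and it takes a genuinely different route from the paper's. Both arguments start from the same translation: the map $\tau$ of Definition \ref{def:tau} turns each $2$-orbit into a nonzero homomorphism $G \to \mathbb{Z}/2\mathbb{Z}$, distinct classes correspond to distinct characters, and (over $\mathbb{Z}/2\mathbb{Z}$) distinct nonzero characters are the same thing as pairwise independent ones. From there the paper argues by contradiction inside the image $\tau(G) \subset (\mathbb{Z}/2\mathbb{Z})^{4}$: assuming no three of the orbits are independent, any element of $\tau(G)$ with two vanishing coordinates must vanish identically; pairwise independence of distinct classes then forces $(0,1,1,1)$ and $(1,0,1,1)$ to lie in $\tau(G)$, and their sum $(1,1,0,0)$ has exactly two zero coordinates, a contradiction. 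You instead dualize: you place the four characters in $\mathrm{Hom}(G,\mathbb{Z}/2\mathbb{Z})$, identify independence of three orbits with linear independence of the corresponding characters (your annihilator argument -- surjectivity onto $(\mathbb{Z}/2\mathbb{Z})^{3}$ fails exactly when the image lies in the kernel of a nonzero functional -- is valid), and then use the counting fact that a $2$-dimensional space over $\mathbb{Z}/2\mathbb{Z}$ has only three nonzero vectors, so four distinct nonzero characters span a space of dimension at least $3$, from which a basis extraction finishes the proof. Your version is direct rather than by contradiction, isolates the underlying linear-algebraic content, and generalizes at once (more than $2^{d}-1$ pairwise different classes yield $d+1$ independent orbits); the paper's version is a self-contained element-chasing computation that never needs the dual space or the notion of extracting a basis, at the cost of looking more ad hoc.
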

\begin{proof}
Suppose that there is no subset of $3$ independent $2$-orbits.
We can consider the action of $G$ on
${\mathcal O}_{1} \cup {\mathcal O}_{2} \cup {\mathcal O}_{3} \cup {\mathcal O}_{4}$
as a subgroup of $({\mathbb Z}/2 {\mathbb Z})^{4}$. Indeed consider the mapping
$\tau$ in Definition \ref{def:tau} for $n=4$.
If an element $a$ of $\tau (G)$ has $2$ coordinates $a_{j}$ and $a_{k}$ ($j \neq k$)
equal to $0$ then any other coordinate $a_{l}$ of $a$ is equal to $0$. Otherwise
${\mathcal O}_{j}$, ${\mathcal O}_{k}$ and ${\mathcal O}_{l}$ are independent.
The projection of $\tau (G)$ in any $2$ coordinates is equal to
$({\mathbb Z}/2 {\mathbb Z})^{2}$ since different $2$-orbits are independent.
We deduce that $(0,1,1,1)$ and $(1,0,1,1)$ belong to $\tau (G)$.
Hence its sum  $(1,1,0,0)$ belongs to $\tau (G)$ providing an element with exactly
two vanishing coordinates. We obtain a contradiction.
\end{proof}
We already know that $D_{2}= \langle 1/z,-z \rangle$ and $D_{4}= \langle 1/z, i z \rangle$
have $3$ classes and $1$ class of $2$-orbits
respectively. Moreover
the unique commutative finite group in
$\mathrm{Homeo}_{+}({\mathbb S}^{2})$ with no global fixed point is
$D_{2}$ that has $3$ classes of
$2$-orbits. These properties are generalized in Theorem \ref{teo:scar}.
The next result is a corollary of Theorem \ref{teo:scar}.
\begin{cor}
Let $G \subset \mathrm{Diff}_{+}^{1}({\mathbb S}^{2})$ be a nilpotent group
whose number of classes of $2$-orbits is $0$, $2$ or greater than
or equal to $4$. Then $G$ has a global
fixed point.
\end{cor}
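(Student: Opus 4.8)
The plan is to deduce the corollary from Theorem \ref{teo:scar} by contraposition, so that essentially no new work is required beyond organizing the logic. First I would suppose, aiming for a contradiction, that $G$ has no global fixed point; that is, $G$ is a fixed-point-free nilpotent subgroup of $\mathrm{Diff}_{+}^{1}({\mathbb S}^{2})$. Under this assumption Theorem \ref{teo:sphere} guarantees that $G$ possesses a finite orbit of cardinality at most $2$, and since a global fixed point (an orbit of cardinality $1$) is excluded, $G$ must have at least one $2$-orbit. In particular the number of classes of $2$-orbits of $G$ is well defined and strictly positive, so the value $0$ is already ruled out at this stage.

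Next I would invoke Theorem \ref{teo:scar} directly: for a fixed-point-free nilpotent $G$ the number of classes of $2$-orbits is forced to equal $1$ or $3$. The substantive content of the corollary is therefore entirely contained in that theorem, and the corollary merely records its contrapositive. Since none of the values $0$, $2$, nor any integer greater than or equal to $4$ belongs to $\{1,3\}$, the hypothesis of the corollary is incompatible with $G$ being fixed-point-free. Hence in each of the three listed cases $G$ must have a global fixed point.

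This argument has no genuine obstacle: the difficulty lives entirely in Theorem \ref{teo:scar}, which is assumed here. The only point worth stating explicitly is that the three listed values exhaust precisely the complement of $\{1,3\}$ among the admissible counts, which is immediate. If one preferred to avoid citing Theorem \ref{teo:scar} in the case of $0$ classes, that case alone follows from Theorem \ref{teo:sphere} as noted above; but the uniform contrapositive disposes of all three cases simultaneously.
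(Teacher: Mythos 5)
Your proposal is correct and matches the paper's intent exactly: the paper states this result as an immediate corollary of Theorem \ref{teo:scar} (with no separate proof), which is precisely the contrapositive argument you give. Your additional remark that the $0$-class case already follows from Theorem \ref{teo:sphere} is consistent with the paper, since that theorem is what guarantees a fixed-point-free nilpotent group has at least one $2$-orbit.
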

Given $\phi, \eta$ commuting elements of $\mathrm{Homeo}_{+}({\mathbb S}^{2})$
there exist isotopies $\phi_{t}$ and $\eta_{t}$ such that
$\phi_{0}=\eta_{0}=Id$, $\phi_{1}=\phi$, $\eta_{1}=\eta$.
The invariant $w(\phi,\eta)$ is the homotopy class defined by
$\gamma_{t}=\phi_{t}^{-1} \eta_{t}^{-1} \phi_{t} \eta_{t}$ in
$\pi_{1}(\mathrm{Homeo}_{+}({\mathbb S}^{2}), Id)$.
This group is isomorphic to ${\mathbb Z}/2 {\mathbb Z}$
since $\mathrm{Homeo}_{+}({\mathbb S}^{2})$ is
homotopically equivalent to $\mathrm{SO}(3)$ \cite{Kneser}.
Then we can analyze the existence
and properties of $2$-orbits of commutative groups by using Theorem \ref{teo:FHP}.
Even if a nilpotent group
$G \subset \mathrm{Diff}_{+}^{1}({\mathbb S}^{2})$
(with no global fixed point) is not in general
commutative we can still apply the properties of the invariant $w$
(cf. \cite{Handel-top} \cite{FHPs}) since its image in
a certain mapping class group is abelian.
\begin{lem}
\label{lem:ircon}
Let $G \subset \mathrm{Diff}_{+}^{1}({\mathbb S}^{2})$ be a
fixed-point-free nilpotent group.
Let $F$ be a finite set composed by two or three
$2$-orbits of $G$ whose classes are pairwise different. Then the image $[G]$ of $G$ by
the mapping $[]: G \to \mathrm{MCG}({\mathbb S}^{2},F)$ is a commutative irreducible group.
\end{lem}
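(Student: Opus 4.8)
The plan is to prove the two assertions of the lemma in turn: that $[G]$ is irreducible in the sense of Definition~\ref{def:irr} (every element is pseudo-Anosov or of finite order) and that it is commutative. Two standing observations drive everything. First, $({\mathbb S}^{2},F)$ has negative Euler characteristic ($\chi=2-\sharp F\in\{-2,-4\}$) and $[G]$, being a quotient of the nilpotent group $G$, is nilpotent, so Lemma~\ref{lem:model} and Remark~\ref{rem:pt2} apply. Second, each $2$-orbit $\mathcal{O}_{i}$ is $G$-invariant as a set, hence every element of $[G]$ preserves the corresponding pair of punctures; since the classes are pairwise different, any two of the orbits are independent, so the image $\tau(G)\subset({\mathbb Z}/2{\mathbb Z})^{n}$ surjects onto every pair of coordinates. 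For $n=2$ this forces $\tau(G)=({\mathbb Z}/2{\mathbb Z})^{2}$, and for $n=3$ it forces $\tau(G)$ to be either all of $({\mathbb Z}/2{\mathbb Z})^{3}$ or the parity subgroup (as for $\langle 1/z,-z\rangle$).

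For irreducibility I would argue by contradiction through the Thurston decomposition. Applying Proposition~\ref{pro:tnf} to $({\mathbb S}^{2},F)$ yields a finite family $\mathcal{R}$ of reducing curves on whose pieces $[G]$ acts by pseudo-Anosov and periodic maps. If $\mathcal{R}=\emptyset$ there is a single piece and every element of $[G]$ is already pseudo-Anosov or of finite order, which is exactly irreducibility; so it suffices to exclude $\mathcal{R}\neq\emptyset$. The group $G$ acts by isometries on the dual tree (Definition~\ref{def:dual}) and therefore fixes a vertex or an edge (Proposition~\ref{pro:iso}). Suppose first it fixes an \emph{edge}, i.e. a single curve $\gamma$. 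Then every $g\in G$ preserves the unordered partition of $F$ induced by $\gamma$; by the independence above, the only $G$-invariant partitions are the orbit-respecting ones, in which each $\mathcal{O}_{i}$ lies entirely on one side. Since $G$ preserves each $\mathcal{O}_{i}$, it cannot interchange the two sides, so it preserves both components of ${\mathbb S}^{2}\setminus\gamma$. Lemma~\ref{lem:fpr} (with $X=F$) then gives $\mathrm{Fix}(G)\neq\emptyset$, contradicting fixed-point-freeness.

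The remaining possibility --- $G$ fixes only a \emph{vertex} $D$ of the dual tree and permutes the boundary curves of $D$ without fixing one --- is where I expect the real work to be, because here invariance of a puncture-partition no longer forces invariance of a curve. I would pass to the sphere $\hat{D}$ obtained by contracting the annuli of $\overline{D}\cap\mathcal{A}$ and analyze the model of $[G]_{|\hat{D}}$ provided by Lemma~\ref{lem:model}. By Remark~\ref{rem:pt2} this model has two fixed points or a $2$-orbit. An interior fixed point would lift, via Proposition~\ref{pro:lif2} and Theorem~\ref{teo:plane2}, to a global fixed point of $G$ (contradiction); a fixed puncture coming from $F$ is impossible because $F$ contains no $G$-fixed point, while a fixed boundary puncture is a $G$-invariant reducing curve and reduces to the edge case above. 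The delicate point is to rule out the surviving configurations in which the model has only a $2$-orbit: here I would use that the components of $\mathcal{R}$ are pairwise disjoint, together with the independence of the $\mathcal{O}_{i}$, to show that a nontrivial permutation of mixed-partition curves cannot be realized by disjoint curves (any two would have to be simultaneously nested and unrelated), forcing $G$ to fix a component after all and closing the contradiction. Establishing this incompatibility cleanly is, I expect, the main obstacle of the proof.

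For commutativity I would use that $[G]$ is now irreducible and preserves the $n\geq 2$ pairwise disjoint $G$-invariant pairs $\mathcal{O}_{1},\dots,\mathcal{O}_{n}$, realized by Lemma~\ref{lem:model}. A nontrivial finite-order element of the model has exactly two fixed points (Remark~\ref{rem:pt2}); preserving two distinct pairs, such an element must be an involution whose fixed-point set is one of the $\mathcal{O}_{i}$, since a rotation of order $\geq 3$ or an element of order $4$ preserves at most one pair (the pair would have to coincide with its two fixed points). Hence the torsion subgroup $T$ has exponent $2$ and is an elementary abelian $2$-group in which each nontrivial element is determined by the pair it fixes; in particular $D_{2^{m}}$ with $m\geq 2$ (whose order-$2^{m}$ rotation preserves only $\{0,\infty\}$) is excluded, and $T$ is trivial, ${\mathbb Z}/2{\mathbb Z}$ or $D_{2}$. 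If $[G]$ is finite this already gives a commutative group. If $[G]$ is infinite it is generated by $T$ and a pseudo-Anosov element $\psi$ (Lemma~\ref{lem:model}); since $\psi$ preserves each $\mathcal{O}_{i}$, the involution $\psi\rho\psi^{-1}$ fixes $\psi(\mathcal{O}_{i})=\mathcal{O}_{i}$ and hence equals the unique involution $\rho$ with that fixed-point set, so $\psi$ centralizes $T$. Therefore $[G]$ is abelian, completing the proof that $[G]$ is a commutative irreducible group.
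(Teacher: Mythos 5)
Your proof of commutativity is essentially the paper's argument and is correct in substance (both routes cut the torsion $T$ of the irreducible model down to $\{Id\}$, $C_{2}$ or $D_{2}$ using the disjoint invariant pairs inside $F$, then centralize $T$ against the pseudo-Anosov generator, and conclude from the fact that the quotient by a central torsion subgroup is cyclic). One slip: a torsion involution preserving two disjoint pairs need \emph{not} have its fixed-point set equal to one of the $\mathcal{O}_{i}$ --- take $\rho(z)=-z$ with $F=\{1,-1\}\cup\{i,-i\}$, whose fixed set is $\{0,\infty\}$. This is harmless: for $T=C_{2}$ normality of the torsion subgroup already forces $\psi\rho\psi^{-1}=\rho$, and for $T=D_{2}$ the only $T$-invariant $2$-element sets are the three fixed-point sets of its involutions, so at least two of the three involutions have their fixed sets among the $\mathcal{O}_{i}$ and are pinned by your argument, the third being their product. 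Your treatment of the invariant-edge case of irreducibility is also correct and matches the paper's.

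The genuine gap is exactly where you flag it: the invariant-vertex case of irreducibility. Your reduction leaves open the configuration in which the model on $\hat{D}$ has no fixed point (only a $2$-orbit), and the combinatorial argument you sketch about disjoint curves realizing a nontrivial permutation is never established; this is the heart of the case, not a routine verification. The paper closes it with a different and much shorter idea that never looks at the model on $\hat{D}$: let $D$ be the $G$-invariant component and $D'$ any connected component of ${\mathbb S}^{2}\setminus\overline{D}$. Essentiality of the reducing curves gives $\sharp(F\cap D')\geq 2$. If $F\cap D'$ contained no complete $2$-orbit, pick $x\in\mathcal{O}_{i}\cap D'$ and $y\in\mathcal{O}_{j}\cap D'$ ($i\neq j$) whose partners lie outside $D'$; any $\phi\in G$ with $\phi_{|\mathcal{O}_{i}}\equiv Id$ satisfies $\phi(D')=D'$ (the complementary components of $\overline{D}$ are permuted and $\phi(D')$ contains $x$), hence $\phi(y)=y$, and by symmetry $[\mathcal{O}_{i}]=[\mathcal{O}_{j}]$, contradicting the hypothesis that the classes are pairwise different. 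Therefore \emph{every} such $D'$ contains a whole $2$-orbit, hence $\phi(D')=D'$ for all $\phi\in G$, hence the curve $\partial D\cap\partial D'$ is $G$-invariant modulo isotopy rel $F$ with both complementary components preserved, and Lemma \ref{lem:fpr} produces a global fixed point of $G$ --- contradiction. This observation, that each complementary component must contain a full $2$-orbit, is the single missing idea in your sketch; with it, the fixed-point-free configuration of the model never needs to be analyzed and the whole vertex case collapses in a few lines.
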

\begin{proof}
We apply the Thurston decomposition (Proposition \ref{pro:tnf}) to
the marked surface $({\mathbb S}^{2},F)$ with
$H_{1}=H_{2}=G$, $K_{1}=\emptyset$, $K_{2}=\emptyset$.
Suppose that $[G]$ is not irreducible; thus
the set ${\mathcal R}$ of reducing curves is non-empty.
Consider the tree $\Gamma$ that is dual to ${\mathcal R}$.
There is no invariant element
$\gamma$ of ${\mathcal R}$ such that the components of ${\mathbb S}^{2} \setminus {\mathcal R}$
are permuted by $G$ (modulo isotopy rel $F$)
since such property implies that all the $2$-orbits in
$F$ are equivalent.
Thus there exists a connected component $D$ of ${\mathbb S}^{2} \setminus {\mathcal A}$
that is $G$-invariant modulo isotopy.

We claim that $G$ fixes a curve in $\overline{D} \cap {\mathcal A}$ modulo isotopy rel $F$.
Let $D'$ be a connected component of ${\mathbb S}^{2} \setminus \overline{D}$.
The set $D'$ contains at least $2$ points of $F$
since curves in ${\mathcal R}$ are essential.
If $F \cap D'$ does not contain a $2$-orbit then there are two equivalent
$2$-orbits in $F$. This contradicts the choice of $F$.
Hence $F \cap D'$ contains a $2$-orbit and
$\partial{D} \cap \partial{D'}$ is $G$-invariant modulo isotopy rel $F$.
Lemma \ref{lem:fpr} implies that $G$ has a global fixed point, contradicting the hypothesis.
We deduce ${\mathcal R}=\emptyset$.

Consider a nilpotent model $G_{0}$ of $[G]$ whose elements are
all irreducible (Lemma \ref{lem:model}).
The set $\mathrm{Tor}(G_{0})$ of finite order elements is a subgroup of
$G_{0}$ (Lemma \ref{lem:model}).
It is either a cyclic group $C_{n}$ or a dihedral group $D_{2^{n}}$ for some
$n \in {\mathbb N}$ (cf. Remark \ref{rem:pt2}).
The set $F$ contains at least two disjoint invariant sets of $2$ elements of
$\mathrm{Tor}(G_{0})$. Hence we obtain
$\mathrm{Tor}(G_{0})=\{Id\}$, $\mathrm{Tor}(G_{0})=C_{2}$ or $\mathrm{Tor}(G_{0})=D_{2}$.
In the first case $G_{0}$ is clearly commutative.
In the second case
$Z^{(1)}(G_{0}) \cap \mathrm{Tor}(G_{0}) \neq \{Id\}$ implies
$\mathrm{Tor}(G_{0}) \subset Z^{(1)}(G_{0})$.
Since $G_{0}/\mathrm{Tor}(G_{0})$ is cyclic, $G_{0}$ is commutative.
We can suppose  $\mathrm{Tor}(G_{0})=D_{2}$.
The group $\mathrm{Tor}(G_{0})$ has exactly three $2$-orbits. Each one of them
is the fixed point set of one of the three non-trivial elements of $\mathrm{Tor}(G_{0})$.
Let $h \in \mathrm{Tor}(G_{0}) \setminus \{Id\}$.
If $f h f^{-1} = h' \neq h$ for some $f \in G_{0}$ then
$\mathrm{Fix}(h) \cup \mathrm{Fix}(h')$ is contained in an orbit of $G_{0}$
and $G_{0}$ has at most an invariant set of $2$ elements.
We deduce $\mathrm{Tor}(G_{0}) \subset Z^{(1)}(G_{0})$; the group $G_{0}$ is commutative.
\end{proof}
\begin{defi}
Consider the setup in Lemma \ref{lem:ircon}.
We can define an invariant $w_{F}:G \times G \to {\mathbb Z}/2{\mathbb Z}$.
Indeed given $\phi, \eta \in G$ we consider
elements $\phi_{0}, \eta_{0} \in \mathrm{Homeo}_{+}({\mathbb S}^{2})$ such that
$[\phi]=[\phi_{0}]$ and $[\eta]=[\eta_{0}]$ in $\mathrm{MCG}({\mathbb S}^{2},F)$
and $\phi_{0} \eta_{0}=\eta_{0} \phi_{0}$.
We define $w_{F}(\phi,\eta)=w(\phi_{0},\eta_{0})$.
\end{defi}
\begin{rem}
Since the center of the group $\pi_{1}({\mathbb S}^{2} \setminus F)$ is trivial
(${\mathbb S}^{2} \setminus F$ has negative Euler characteristic), the group
$\pi_{1}(\mathrm{MCG}({\mathbb S}^{2},F),Id)$ is also trivial.
In particular the definition of $w_{F}(\phi,\eta)$ does not depend on the
choices of $\phi_{0}$ and $\eta_{0}$ (provided they commute).
We can choose $\phi_{0}$ and $\eta_{0}$ in the group $G_{0}$ defined in Lemma
\ref{lem:ircon}.
\end{rem}
The next lemmas are used to obtain properties of the invariant $w_{F}$.
They provide the algebraic structure of the invariant and
link the existence of fixed points with its vanishing
respectively.
\begin{pro}
\label{pro:32}
\cite{FHPs}[Proposition 3.2]
Let $G$ be a commutative subgroup of
$\mathrm{Homeo}_{+}({\mathbb S}^{2})$.
The invariant $w: G \times G \to {\mathbb Z}/2{\mathbb Z}$ is symmetric
and $w(.,g)$ is a morphism of groups for any $g \in G$.
\end{pro}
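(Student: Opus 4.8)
The plan is to set aside the loop definition of $w$ and recompute the invariant inside the central extension furnished by the universal covering of $\mathrm{Homeo}_+(\mathbb{S}^2)$. Since $\mathrm{Homeo}_+(\mathbb{S}^2)$ is homotopy equivalent to $\mathrm{SO}(3)$, its fundamental group is $\mathbb{Z}/2\mathbb{Z}$ and the universal cover $\widetilde{\Gamma}$ fits into a central extension $1 \to \mathbb{Z}/2\mathbb{Z} \to \widetilde{\Gamma} \to \mathrm{Homeo}_+(\mathbb{S}^2) \to 1$ in which the kernel is identified with $\pi_1(\mathrm{Homeo}_+(\mathbb{S}^2),\mathrm{Id})$. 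First I would assign to each $\phi \in G$ the lift $\tilde\phi \in \widetilde{\Gamma}$ obtained as the endpoint of the lifted isotopy $\tilde\phi_t$ with $\tilde\phi_0 = \mathrm{Id}$. For commuting $\phi,\eta$ the lifted path of $\gamma_t = \phi_t^{-1}\eta_t^{-1}\phi_t\eta_t$ is, by uniqueness of lifts and the fact that the covering projection is a homomorphism, exactly $\tilde\phi_t^{-1}\tilde\eta_t^{-1}\tilde\phi_t\tilde\eta_t$, so its endpoint is the commutator $[\tilde\phi,\tilde\eta] := \tilde\phi^{-1}\tilde\eta^{-1}\tilde\phi\tilde\eta$. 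This element projects to $\mathrm{Id}$, hence lies in the central kernel $\mathbb{Z}/2\mathbb{Z}$, and under the identification of that kernel with $\pi_1$ it equals $w(\phi,\eta)$; because the kernel is central, the value is independent of the chosen lifts.

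With this dictionary both assertions become formal group theory in $\widetilde{\Gamma}$. For symmetry, note that $[\tilde\eta,\tilde\phi] = [\tilde\phi,\tilde\eta]^{-1}$; since the kernel is $\mathbb{Z}/2\mathbb{Z}$ every element equals its own inverse, so $w(\eta,\phi)=w(\phi,\eta)$. For the morphism property, take $\phi,\psi,g \in G$; as they pairwise commute, the commutators $[\tilde\phi,\tilde g]$ and $[\tilde\psi,\tilde g]$ lie in the central kernel. Choosing the lift of $\phi\psi$ to be $\tilde\phi\tilde\psi$ and using that commutators with central values are bimultiplicative, namely $[\tilde\phi\tilde\psi,\tilde g]=[\tilde\phi,\tilde g]\,[\tilde\psi,\tilde g]$ (which follows from $[ab,c]=b^{-1}[a,c]bc^{-1}\cdots$ together with centrality of $[a,c]$), I obtain in the abelian kernel the relation $w(\phi\psi,g)=w(\phi,g)+w(\psi,g)$. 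Thus $w(\cdot,g)$ is a homomorphism for every $g\in G$.

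The step that demands genuine care is the translation itself: verifying that the endpoint of the lift of $\gamma_t$ is literally the group-theoretic commutator of the lifts, and that the identification of $\ker(\widetilde{\Gamma}\to\mathrm{Homeo}_+(\mathbb{S}^2))$ with $\pi_1$ is compatible with the loop definition of $w$. This rests on path-lifting in the covering group and on the covering projection being a homomorphism, so that products of lifted paths lift products of paths. I expect this to be the main obstacle; in particular, a direct loop-theoretic argument would be awkward because the pointwise product $\gamma^{(\phi,g)}_t\,\gamma^{(\psi,g)}_t$ does not coincide on the nose with $\gamma^{(\phi\psi,g)}_t$, and producing the required homotopy by hand is exactly the work that the exact commutator identity in $\widetilde{\Gamma}$ absorbs. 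Once the central-extension description is in place, symmetry and bimultiplicativity are immediate.
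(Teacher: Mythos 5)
The paper never proves this statement: it is imported verbatim as Proposition 3.2 of \cite{FHPs}, so there is no internal argument to compare against, and your proposal has to stand on its own. It does. Passing to the universal covering group $\widetilde{\Gamma}\to\mathrm{Homeo}_{+}({\mathbb S}^{2})$ is legitimate (the group is a locally contractible, connected topological group, homotopy equivalent to $\mathrm{SO}(3)$ by Kneser's theorem, so the cover exists and carries a unique group structure making the projection a homomorphism); its kernel is a discrete normal, hence central, subgroup canonically identified with $\pi_{1}(\mathrm{Homeo}_{+}({\mathbb S}^{2}),Id)\cong{\mathbb Z}/2{\mathbb Z}$. Your key translation is sound: since the projection is a homomorphism, uniqueness of path lifting forces the lift of $\gamma_{t}=\phi_{t}^{-1}\eta_{t}^{-1}\phi_{t}\eta_{t}$ starting at the identity to be $\tilde{\phi}_{t}^{-1}\tilde{\eta}_{t}^{-1}\tilde{\phi}_{t}\tilde{\eta}_{t}$, so $w(\phi,\eta)$ is the commutator $[\tilde{\phi},\tilde{\eta}]$ of any lifts, and centrality of the kernel makes this independent of all choices --- which, as a bonus, proves well-definedness of $w$, a point the definition in the paper leaves implicit. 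Symmetry then follows from $[\tilde{\eta},\tilde{\phi}]=[\tilde{\phi},\tilde{\eta}]^{-1}$ together with every element of ${\mathbb Z}/2{\mathbb Z}$ being its own inverse, and additivity from $[ab,c]=[a,c][b,c]$, valid because the commutators involved are central, applied to the lift $\tilde{\phi}\tilde{\psi}$ of $\phi\psi$. The one step you leave tacit is that the endpoint-of-lift identification of the kernel with $\pi_{1}$ is a group isomorphism (so that multiplication of commutators really corresponds to addition of classes); in general this is the standard covering-group fact that pointwise product of loops is homotopic to concatenation, and here it is immediate anyway, since any bijection between two-element groups sending identity to identity is an isomorphism. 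In short, your proof is correct and self-contained, and it recovers the cited result by exactly the kind of central-extension argument that makes both claimed properties formal.
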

\begin{lem}
\label{lem:Handel}
\cite{Handel-top}[Lemma 1.2]
Suppose that $\eta: {\mathbb S}^{2} \to {\mathbb S}^{2}$ is a rotation about some axis by $2 \pi p/q$
($p/q \in {\mathbb Q}, 0 < p/q < 1$) and that $\phi \in \mathrm{Homeo}_{+} ({\mathbb S}^{2})$
commutes with $\eta$.
\begin{itemize}
\item If $\phi$ interchanges the fixed
points $z_{1}$ and $z_{2}$ of $\eta$ then $p/q = 1/2$ and $w(\phi, \eta) = 1$.
\item If $\phi(z_{1})=z_{1}$ and $\phi(z_{2})=z_{2}$ then $w(\phi, \eta) = 0$.
\end{itemize}
\end{lem}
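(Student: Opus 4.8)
The plan is to compute $w(\phi,\eta)$ by transporting everything into the universal cover of $\mathrm{Homeo}_{+}({\mathbb S}^{2})$, where the commuting relation makes the invariant a central element that can be evaluated by an explicit quaternionic computation once $\phi$ has been replaced by an isometry. First I would record the elementary constraints. Since $\phi$ commutes with $\eta$, it permutes $\mathrm{Fix}(\eta)=\{z_{1},z_{2}\}$, so exactly the two stated cases occur. In the interchanging case I would first prove $p/q=1/2$ by a local rotation-number argument: with the orientation of ${\mathbb S}^{2}$, the finite-order rotation $\eta$ acts on the tangent plane at $z_{1}$ by $+2\pi p/q$ and at $z_{2}$ by $-2\pi p/q$, and an orientation-preserving homeomorphism carrying $z_{1}$ to $z_{2}$ preserves the signed local rotation number. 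Hence $\phi\eta\phi^{-1}$ has local rotation $+2\pi p/q$ at $z_{2}$, while the commuting hypothesis forces $\phi\eta\phi^{-1}=\eta$, which has local rotation $-2\pi p/q$ at $z_{2}$. Thus $2\pi p/q \equiv -2\pi p/q \pmod{2\pi}$, i.e. $2p/q\in{\mathbb Z}$, and since $0<p/q<1$ this gives $p/q=1/2$.

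The main tool is a lift description of $w$. Writing $\Pi:\widehat{H}\to H$ for the universal covering of $H:=\mathrm{Homeo}_{+}({\mathbb S}^{2})$, which is a central extension by $\pi_{1}(H)\cong{\mathbb Z}/2{\mathbb Z}$ since $H$ is homotopy equivalent to $\mathrm{SO}(3)$ \cite{Kneser}, and choosing any lifts $\hat{\phi},\hat{\eta}$, the commutator $[\hat{\phi},\hat{\eta}]$ lies in the central kernel, is independent of the chosen lifts, and equals $w(\phi,\eta)$; this agrees with the isotopy definition because the lift of the loop $\gamma_{t}=\phi_{t}^{-1}\eta_{t}^{-1}\phi_{t}\eta_{t}$ starting at the identity terminates at $[\hat{\phi},\hat{\eta}]$. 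The payoff is a deformation principle: if $\phi_{s}$ is a path inside the centralizer $Z_{H}(\eta)$, then $[\hat{\phi}_{s},\hat{\eta}]$ is a continuous ${\mathbb Z}/2{\mathbb Z}$-valued function of $s$, hence constant, so $w(\cdot,\eta)$ is constant on path components of $Z_{H}(\eta)$. I would then reduce $\phi$ to an isometry inside this centralizer. Taking $p/q$ in lowest terms, $\langle\eta\rangle\cong C_{q}$ acts on ${\mathbb S}^{2}$ with quotient orbifold ${\mathcal O}$ a sphere with two cone points of order $q$, and $\phi$ descends to $\bar{\phi}\in\mathrm{Homeo}_{+}({\mathcal O})$. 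In the fixing case $\bar{\phi}$ fixes both cone points, and the pure mapping class group of the twice-marked sphere is trivial, so $\bar{\phi}$ is isotopic to the identity rel the cone points; in the interchanging case $q=2$ and $\bar{\phi}$ is isotopic to the standard involution swapping the two cone points. Lifting such an orbifold isotopy to ${\mathbb S}^{2}$ connects $\phi$, through $Z_{H}(\eta)$, either to some $\eta^{j}$ (fixing case) or to the rotation by $\pi$ about an equatorial axis (interchanging case). \emph{The delicate point is exactly this equivariant lifting step}: one must check that the isotopy downstairs lifts to an isotopy upstairs along each stage of which the conjugation action on the deck group $\langle\eta\rangle$ stays trivial, so that the whole path lies in the centralizer and not merely in the normalizer of $\langle\eta\rangle$.

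Finally I would evaluate $w$ on the isometry representatives inside $\mathrm{SU}(2)$, using that the restriction of $\Pi$ over $\mathrm{SO}(3)$ is the spin double cover $\mathrm{SU}(2)\to\mathrm{SO}(3)$ and identifying its kernel with the unit quaternions $\{\pm 1\}$. In the fixing case $w(\phi,\eta)=w(\eta^{j},\eta)=0$, either because $\hat{\eta}^{\,j}$ and $\hat{\eta}$ lie in a common one-parameter subgroup of $\mathrm{SU}(2)$ and hence commute, or from $w(\eta,\eta)=0$ (the commutator loop is constant) together with bilinearity on the abelian group $\langle\eta\rangle$ (Proposition \ref{pro:32}). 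In the interchanging case, lifting the rotation $\eta$ by $\pi$ about the polar axis and the rotation $\rho$ by $\pi$ about the equatorial axis to the unit quaternions $k$ and $i$ respectively yields $[\,i,k\,]=iki^{-1}k^{-1}=ikik=(-j)(-j)=-1$, the non-trivial kernel element, so $w(\phi,\eta)=w(\rho,\eta)=1$. Combining the two computations with the deformation principle of the second paragraph produces the two asserted values and completes the proof.
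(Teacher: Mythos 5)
Your proof is correct, but it takes a genuinely different route from the paper's, and it proves more: the paper only proves the second bullet, quoting Handel's Lemma 1.2 of \cite{Handel-top} verbatim for the interchanging case, whereas you establish both bullets (including $p/q=1/2$) from scratch. For the fixing case the paper works in the universal cover $\tilde{A}$ of the annulus $A={\mathbb S}^{2}\setminus\{z_{1},z_{2}\}$: it lifts isotopies of $\phi$ and $\eta$ rel $\{z_{1},z_{2}\}$, writes $\tilde{\eta}\tilde{\phi}=T^{a}\tilde{\phi}\tilde{\eta}$ for the deck transformation $T$, deduces $a=0$ from $\tilde{\eta}^{q}=T^{p}$, and then invokes Handel's Lemma 1.1 to pass from triviality of the commutator loop in $\pi_{1}(A,z)$ to triviality in $\pi_{1}(\mathrm{Homeo}_{+}({\mathbb S}^{2},\{z_{1},z_{2}\}),Id)$, i.e. $w(\phi,\eta)=0$. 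You instead work in the universal covering group $\widehat{H}$ of $H=\mathrm{Homeo}_{+}({\mathbb S}^{2})$ itself, identify $w(\phi,\eta)$ with the central commutator of lifts, note that this is locally constant (hence constant) along paths in the centralizer $Z_{H}(\eta)$, deform $\phi$ inside $Z_{H}(\eta)$ to an isometry via the quotient orbifold ${\mathbb S}^{2}/\langle\eta\rangle$, and finish with the quaternion computation $iki^{-1}k^{-1}=-1$ in $\mathrm{SU}(2)$. What your approach buys: independence from both of Handel's lemmas, a uniform treatment of the two bullets, and a commutator-of-lifts picture that makes the algebraic properties of $w$ (Proposition \ref{pro:32}) transparent. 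What it costs: you need the universal covering group of $H$ to exist as a topological group (local contractibility of $H$ together with \cite{Kneser}), and the two points you leave partly to the reader do require justification, though both are standard: (i) the invariance of the signed local rotation number under orientation-preserving topological conjugation needs a purely topological definition of that number (e.g. via the deck group of the cyclic covering $A\to A/\langle\eta\rangle$, or Ker\'{e}kj\'{a}rt\'{o}'s theorem), since $\phi$ is only a homeomorphism; and (ii) your flagged ``delicate point'' is closed by observing that $s\mapsto\phi_{s}\eta\phi_{s}^{-1}$ is a continuous map into the finite, hence discrete, deck group $\langle\eta\rangle$, equal to $\eta$ at $s=0$, and therefore constant, so the lifted isotopy stays in the centralizer and never merely in the normalizer.
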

\begin{proof}
The first case is Handel's lemma. The second property is an easy exercise whose proof
we include for the sake of clarity.

Let $A$ be the open annulus ${\mathbb S}^{2} \setminus \{z_{1},z_{2}\}$.
Consider the universal covering $\tilde{A}$ of $A$ and a generator
$T$ of the group of covering transformations.
There exists an isotopy $\eta_{t}$ rel $\{z_{1},z_{2}\}$ such that
$\eta_{0}=Id$ and $\eta_{1}=\eta$. For instance $\eta_{t}$ can be a rotation by
$2 \pi t p/q$.
Consider the lift $\tilde{\eta}_{t}$ of $\eta_{t}$ to $\tilde{A}$ such that $\tilde{\eta}_{0}=Id$.
Since the class of an element in the mapping class group of the twice-punctured
sphere is determined by the action on the punctures
($\mathrm{MCG}(S_{0,2}) \approx {\mathbb Z}/2 {\mathbb Z}$),
we consider an isotopy $\phi_{t}$ rel $\{z_{1},z_{2}\}$
such that $\phi_{0}=Id$ and $\phi_{1}=\phi$. We consider the lift
$\tilde{\phi}_{t}$ of $\phi_{t}$ to $\tilde{A}$ such that $\tilde{\phi}_{0}=Id$.
We have $\tilde{\eta}^{q} = T^{p}$, $\tilde{\phi} T = T \tilde{\phi}$, $\tilde{\eta} T = T \tilde{\eta}$
and $\tilde{\eta} \tilde{\phi} = T^{a} \tilde{\phi} \tilde{\eta}$ for some $a \in {\mathbb Z}$.

We have
\[ T^{p} \tilde{\phi} = \tilde{\eta}^{q} \tilde{\phi} = T^{aq} \tilde{\phi} \tilde{\eta}^{q} =
T^{aq} \tilde{\phi} T^{p} = T^{aq+p} \tilde{\phi} \implies p = aq+p \implies a=0. \]
The class of the path
$\gamma_{z}:[0,1] \to A$ defined by
$\gamma(t) = (\phi_{t}^{-1} \eta_{t}^{-1} \phi_{t} \eta_{t})(z)$
is trivial in $\pi_{1}(A,z)$ for any $z \in A$ since
$\tilde{\eta} \tilde{\phi} = \tilde{\phi} \tilde{\eta}$.
The path $\phi_{t}^{-1} \eta_{t}^{-1} \phi_{t} \eta_{t}$ is trivial in
$\pi_{1}(\mathrm{Homeo}_{+}({\mathbb S}^{2},\{z_{1},z_{2}\}),Id)$
(cf. \cite{Handel-top}[Lemma 1.1]). We deduce $w(\phi,\eta)=0$.
\end{proof}
Next proposition encapsulates the properties of the invariant $w_{F}$
that allow to prove Theorem \ref{teo:scar}.
\begin{pro}
\label{pro:setup}
Let $G \subset \mathrm{Diff}_{+}^{1}({\mathbb S}^{2})$ be a
fixed-point-free nilpotent group.
Let $F$ be a finite set composed by two or three
$2$-orbits of $G$ whose classes are pairwise different. Then we have
\begin{itemize}
\item The invariant $w_{F}: G \times G \to {\mathbb Z}/2{\mathbb Z}$ is symmetric
and $w_{F}(.,\eta)$ is a morphism of groups for any $\eta \in G$.
\item Given a subgroup $H$ of $G$ such that  $(w_{F})_{|H \times H} \equiv 0$
then $H$ has a global fixed point. In particular we have $w_{F} \not \equiv 0$.
\end{itemize}
\end{pro}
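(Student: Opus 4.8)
The plan is to reduce everything to the commutative model $G_0$ supplied by Lemma \ref{lem:ircon} and Lemma \ref{lem:model}. Recall that $[G]$, the image of $G$ in $\mathrm{MCG}({\mathbb S}^2,F)$, is commutative and irreducible, and that the realization $\sigma$ of Lemma \ref{lem:model} is an isomorphism of $[G]$ onto the commutative group of homeomorphisms $G_0$, with $\sigma([\phi])$ isotopic to $\phi$ rel $F$. Writing $\phi_0:=\sigma([\phi])$, the map $\phi\mapsto\phi_0$ is a homomorphism $G\to G_0$, and by the remark following the definition of $w_F$ we may evaluate $w_F(\phi,\eta)=w(\phi_0,\eta_0)$ with $\phi_0,\eta_0$ the (commuting) images in $G_0$. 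For the first item I would then simply transport structure: since $G_0$ is commutative, Proposition \ref{pro:32} gives that $w$ is symmetric on $G_0$ and a morphism in each variable; symmetry of $w_F$ is immediate, and $(\phi\psi)_0=\phi_0\psi_0$ yields $w_F(\phi\psi,\eta)=w(\phi_0,\eta_0)+w(\psi_0,\eta_0)=w_F(\phi,\eta)+w_F(\psi,\eta)$, so $w_F(\cdot,\eta)$ is a morphism.

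For the second item, fix $H\le G$ with $(w_F)_{|H\times H}\equiv 0$ and set $H_0:=\sigma([H])\le G_0$, a commutative irreducible group satisfying $w_{|H_0\times H_0}\equiv 0$. If $[H]=\{Id\}$ every element of $H$ is isotopic to the identity rel $F$, hence fixes $F$ pointwise, and $\mathrm{Fix}(H)\supseteq F\neq\emptyset$. Otherwise the key step is to show $\mathrm{Fix}(H_0)\neq\emptyset$. By Lemma \ref{lem:model} and Remark \ref{rem:pt2} one has $H_0=\langle\mathrm{Tor}(H_0),f_0\rangle$ with $f_0$ trivial or pseudo-Anosov and $\mathrm{Tor}(H_0)\in\{\{Id\},C_2,D_2\}$, the finite order elements being rotations up to a common topological conjugacy.

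I would then run the torsion cases against the vanishing of $w$. The case $\mathrm{Tor}(H_0)=D_2$ is impossible: its two generating involutions $a,b$ commute and $a$ interchanges the two fixed points of $b$, so Lemma \ref{lem:Handel} forces $w(a,b)=1$. If $\mathrm{Tor}(H_0)=C_2=\langle h\rangle$, then every $\phi_0\in H_0$ commutes with $h$ and, since $w(\phi_0,h)=0$, cannot interchange the two fixed points of $h$ by Lemma \ref{lem:Handel}; hence $H_0$ fixes both of them. If $\mathrm{Tor}(H_0)=\{Id\}$ then $H_0=\langle f_0\rangle$ and $\mathrm{Fix}(H_0)=\mathrm{Fix}(f_0)\neq\emptyset$ (the identity fixes everything, and a pseudo-Anosov map has fixed points by the Lefschetz index bound of Remark \ref{rem:pt2}). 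In every admissible case $\mathrm{Fix}(H_0)\neq\emptyset$.

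It remains to push this fixed point down to $H$. If $\mathrm{Fix}(H_0)$ meets $F$ then $H$ fixes that marked point, because $[\cdot]$ preserves the permutation of $F$, and we are done. Otherwise $\mathrm{Fix}(H_0)$ contains an interior point of $M:={\mathbb S}^2\setminus F$; since $[H]$ is non-trivial and irreducible and its model $H_0$ has this interior global fixed point, Proposition \ref{pro:lif2} produces a lift $\tilde H$ to the universal covering of $M$ and an element $\phi\in H$ with $[\phi]\neq Id$ for which $\mathrm{Fix}(\tilde\phi)$ is a non-empty compact set; Theorem \ref{teo:plane2} then yields $\mathrm{Fix}(\tilde H)\neq\emptyset$, hence $\mathrm{Fix}(H)\neq\emptyset$. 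Applying this with $H=G$, fixed-point-freeness of $G$ forces $w_F\not\equiv 0$. The main obstacle is exactly the middle step: Theorem \ref{teo:FHP} cannot be invoked directly because $H_0$ consists only of homeomorphisms, so the existence of a fixed point of the model must be wrung out of $w_{|H_0\times H_0}\equiv 0$ through the torsion classification and Handel's Lemma \ref{lem:Handel}, after which the transfer to the $C^1$ group $H$ goes through the lift and Theorem \ref{teo:plane2}.
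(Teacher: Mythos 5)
Your proposal is correct and follows essentially the same route as the paper: transport the structure of $w$ via Proposition \ref{pro:32} through the commutative irreducible model $G_{0}$ of Lemma \ref{lem:ircon}, obtain a fixed point of $H_{0}$ from the vanishing of $w$ together with Lemma \ref{lem:Handel} applied to torsion elements, and transfer it back to $H$ via Proposition \ref{pro:lif2} and Theorem \ref{teo:plane2}. The only cosmetic difference is that you enumerate the torsion cases $\{Id\}$, $C_{2}$, $D_{2}$ explicitly, whereas the paper handles all non-trivial torsion at once by noting that Handel's lemma forces $\mathrm{Fix}(h)=\mathrm{Fix}(H_{0})$ for any $h\in\mathrm{Tor}(H_{0})\setminus\{Id\}$.
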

\begin{proof}
The image $[G]$ of $G$ in $\mathrm{MCG}({\mathbb S}^{2},F)$ is an irreducible
commutative group by Lemma \ref{lem:ircon}. Let $G_{0}$ be the model of
$[G]$ provided by Lemma \ref{lem:model}. Since
$w:G_{0} \times G_{0} \to {\mathbb Z}/2{\mathbb Z}$ is symmetric and
a morphism of groups in each component (Proposition \ref{pro:32}), so is $w_{F}$.

Consider the subgroup $H_{0}$ of $G_{0}$ that is the image of $[H]$ by the
isomorphism $[G] \to G_{0}$ that associates elements defining the same class in
$\mathrm{MCG}({\mathbb S}^{2},F)$.

We claim that $\mathrm{Fix}(H_{0})$ is not empty.
If $\mathrm{Tor}(H_{0}) = \{Id\}$ then $H_{0}$ is cyclic and $\mathrm{Fix}(H_{0}) \neq \emptyset$.
Let $h \in \mathrm{Tor}(H_{0}) \setminus \{Id\}$.
The set $\mathrm{Fix}(h)$ has $2$ points and it is $H_{0}$-invariant.
Lemma \ref{lem:Handel} implies $\mathrm{Fix}(h) = \mathrm{Fix}(H_{0})$.

We can suppose $\mathrm{Fix}(H_{0}) \cap F = \emptyset$ since
$\mathrm{Fix}(H_{0}) \cap F = \mathrm{Fix}(H) \cap F$.
There exists a lift $\tilde{H}$ of $H$ to the universal covering of
${\mathbb S}^{2} \setminus F$ with an element
$\tilde{\alpha} \in \tilde{H}$ such that
$\mathrm{Fix}(\tilde{\alpha})$ is a non-empty compact set
(Proposition \ref{pro:lif2}).
Theorem \ref{teo:plane2} implies
that $\mathrm{Fix}(\tilde{H})$ and then $\mathrm{Fix}(H)$
are not empty.
\end{proof}
Consider the hypotheses in Proposition \ref{pro:setup}.
In next lemma we see that $w_{F}$ is completely determined.
The description of $w_{F}$ implies Theorem \ref{teo:scar}
in a straightforward way.
Moreover the mapping $w_{F}$ is simple, for instance
$w_{F}$ factors through a quotient
(isomorphic to $D_{2}$) of $G$.
\begin{lem}
\label{lem:J}
Let $G \subset \mathrm{Diff}_{+}^{1}({\mathbb S}^{2})$ be a
fixed-point-free nilpotent group.
Let $F$ be a finite set composed by two or three
$2$-orbits of $G$ whose classes are pairwise different.
Fix different $2$-orbits ${\mathcal O}_{1}$, ${\mathcal O}_{2}$ contained in $F$.
We denote
\[ H_{j} = \{ h \in G : h_{|{\mathcal O}_{j}} \equiv Id \}  \ \mathrm{for} \ j \in \{1,2\}. \]
Then
$H_{1} \cap H_{2}$ coincides with
$J:= \{ \phi \in G : w_{F}(\phi,\eta)=0 \ \forall \eta \in G \}$.
Moreover we have
\begin{itemize}
\item If $F$ is composed of two $2$-orbits then there exists a third class of $2$-orbits for $G$.
\item If $F$ is composed of three $2$-orbits then they are not independent.
\end{itemize}
\end{lem}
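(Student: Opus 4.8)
The plan is to reduce everything to a linear-algebra statement about an alternating ${\mathbb Z}/2{\mathbb Z}$-valued form on a small elementary abelian $2$-group. First I would pass to the commutative irreducible model $G_{0}$ of $[G]$ in $\mathrm{MCG}({\mathbb S}^{2},F)$ furnished by Lemmas \ref{lem:ircon} and \ref{lem:model}. This $G_{0}$ is fixed-point-free (a common fixed point off $F$ would lift to one of $G$ by Proposition \ref{pro:lif2} and Theorem \ref{teo:plane2}) and has the form $G_{0}=\mathrm{Tor}(G_{0})\times\langle f_{0}\rangle$ with $f_{0}$ pseudo-Anosov or trivial and $\mathrm{Tor}(G_{0})\in\{C_{2},D_{2}\}$. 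Since $w_{F}$ is evaluated through the model and is a symmetric morphism in each variable (Proposition \ref{pro:setup}) with $w_{F}(\phi,\phi)=0$, it is an alternating form; as $w_{F}(\phi^{2},\cdot)=0$ it descends to an alternating form $\bar w$ on $V:=G_{0}\otimes{\mathbb Z}/2{\mathbb Z}$, and $\dim_{{\mathbb Z}/2{\mathbb Z}}V\le 3$. I set $J=\{\phi:w_{F}(\phi,\eta)=0\ \forall\eta\}$, the radical; it is normal because $w_{F}$ factors through the abelian quotient $G_{0}$, and $G/J\cong V/\mathrm{rad}(\bar w)$.

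The first key point is that $G/J\cong D_{2}$ carries the non-degenerate symplectic form. Indeed $w_{F}\not\equiv 0$ because $G$ is fixed-point-free (Proposition \ref{pro:setup}), so $\bar w$ has positive even rank, and since $\dim V\le 3$ this rank is exactly $2$; hence $G/J\cong({\mathbb Z}/2{\mathbb Z})^{2}$ with $w_{F}$ non-degenerate. The second key point is that every $2$-orbit ${\mathcal O}$ of $G$ has a class $\tau_{\mathcal O}$ factoring through $G/J$, i.e. $J\subseteq H_{\mathcal O}:=\{h\in G:h_{|{\mathcal O}}\equiv Id\}$. To prove this I note that the model of $H_{\mathcal O}$ fixes the two points of ${\mathcal O}\subseteq F$, so $w_{F}$ vanishes on $H_{\mathcal O}\times H_{\mathcal O}$ (vanishing of $w$ on commuting maps with a common fixed point, the elementary direction of Theorem \ref{teo:FHP}; cf. Lemma \ref{lem:Handel}). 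Consequently the image of $H_{\mathcal O}$ in the symplectic space $G/J$ is isotropic, hence cannot be all of $G/J$; since $H_{\mathcal O}$ has index $2$ this forces $J\subseteq H_{\mathcal O}$.

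With these two facts the three assertions become ${\mathbb Z}/2{\mathbb Z}$-linear algebra. For $H_{1}\cap H_{2}=J$: the classes $\tau_{{\mathcal O}_{1}},\tau_{{\mathcal O}_{2}}$ are distinct non-zero functionals on $G/J\cong({\mathbb Z}/2{\mathbb Z})^{2}$, so their common kernel in $G/J$ is trivial and pulls back to $J$, giving $H_{1}\cap H_{2}=\ker\tau_{{\mathcal O}_{1}}\cap\ker\tau_{{\mathcal O}_{2}}=J$. When $F$ carries three classes they are the three non-zero functionals of $(G/J)^{*}$ and hence satisfy $\tau_{{\mathcal O}_{1}}+\tau_{{\mathcal O}_{2}}+\tau_{{\mathcal O}_{3}}=0$, so the image of $\tau$ lies in $\{x+y+z=0\}$ and the orbits are not independent. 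When $F$ carries only two classes I would take $\ell=\tau_{{\mathcal O}_{1}}+\tau_{{\mathcal O}_{2}}$, the third non-zero functional on $G/J$, and set $K=\ker\ell$: since $K/J$ is a line, $w_{F}\equiv 0$ on $K$, whence $\mathrm{Fix}(K)\neq\emptyset$ by Proposition \ref{pro:setup}. As $K$ is normal and $G$ is fixed-point-free, $G/K\cong{\mathbb Z}/2{\mathbb Z}$ acts on $\mathrm{Fix}(K)$ with all orbits of size exactly $2$, so $\mathrm{Fix}(K)$ contains a $2$-orbit ${\mathcal O}_{3}$; it satisfies $\tau_{{\mathcal O}_{3}}|_{K}=0$ and $\tau_{{\mathcal O}_{3}}\neq 0$, hence $\tau_{{\mathcal O}_{3}}=\ell$, a genuinely new class.

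The main obstacle is the second key point: that every $2$-orbit class kills the radical $J$, equivalently that $w_{F}$ vanishes on each stabilizer $H_{\mathcal O}$. This is precisely what makes $w_{F}$ ``completely determined'' as the symplectic form on a single $D_{2}$-quotient, and it is where one must combine Proposition \ref{pro:setup} with the vanishing of the invariant in the presence of a common fixed point. The ancillary point that the rank is exactly $2$ (so that the quotient is $D_{2}$ rather than a larger symplectic group) is what forces me to invoke the structural description $G_{0}=\mathrm{Tor}(G_{0})\times\langle f_{0}\rangle$ with $\mathrm{Tor}(G_{0})\in\{C_{2},D_{2}\}$ coming from Lemma \ref{lem:ircon} and Remark \ref{rem:pt2}.
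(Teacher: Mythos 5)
Your architecture is essentially the paper's own proof in symplectic clothing: the same model $G_{0}$, the same invariant $w_{F}$ controlled by Proposition \ref{pro:setup}, the same index-two subgroup in the two-orbit case (your $K=\ker(\tau_{{\mathcal O}_{1}}+\tau_{{\mathcal O}_{2}})$ is exactly the paper's $H_{3}=\langle J,\eta_{1}\eta_{2}\rangle$), and the same closing ${\mathbb Z}/2{\mathbb Z}$-linear algebra; identifying $G/J$ as a rank-two alternating space first and reading off all three assertions from it is a clean reorganization, not a different method. However, there is one genuine gap, and it sits precisely at the step you yourself single out as the crux.

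You justify the vanishing of $w_{F}$ on $H_{{\mathcal O}}\times H_{{\mathcal O}}$ by invoking ``vanishing of $w$ on commuting maps with a common fixed point, the elementary direction of Theorem \ref{teo:FHP}''. This citation is invalid for two reasons. First, Theorem \ref{teo:FHP} is a statement about subgroups of $\mathrm{Diff}_{+}^{1}({\mathbb S}^{2})$, whereas $w_{F}$ is computed on the model group $\sigma(H_{{\mathcal O}})\subset\mathrm{Homeo}_{+}({\mathbb S}^{2})$, whose elements (pseudo-Anosov or periodic homeomorphisms) are not $C^{1}$ diffeomorphisms, so the theorem simply does not apply to them. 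Second, and more seriously, the principle itself is false for homeomorphisms: take, in polar coordinates, $\phi(r,\theta)=(e^{-1}r,\theta)$ and $\eta(r,\theta)=(r,\theta-2\pi\ln r)$, both extended to ${\mathbb S}^{2}$. They commute (the angles of $\phi\eta$ and $\eta\phi$ differ by $2\pi$), they have the two common fixed points $0$ and $\infty$, yet for the isotopies $\phi_{t}(r,\theta)=(e^{-t}r,\theta)$ and $\eta_{t}(r,\theta)=(r,\theta-2\pi t\ln r)$ the commutator loop is $\gamma_{t}=\phi_{t}^{-1}\eta_{t}^{-1}\phi_{t}\eta_{t}=R_{-2\pi t^{2}}$, a full rotation, i.e. the generator of $\pi_{1}(\mathrm{Homeo}_{+}({\mathbb S}^{2}),Id)$; hence $w(\phi,\eta)=1$. (Of course $\eta$ is not $C^{1}$ at the poles, which is exactly why the smoothness hypothesis in Theorem \ref{teo:FHP} is not cosmetic.) So ``common fixed point $\Rightarrow w=0$'' cannot be quoted as a general fact; it must be proved for the specific maps at hand.

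The repair is exactly the paper's argument, and it uses structure you have already set up: $\sigma(H_{{\mathcal O}})$ is commutative, all its elements fix the two points of ${\mathcal O}$, its torsion elements are genuine finite-order homeomorphisms (topologically conjugate to rotations about those two points), and $\sigma(H_{{\mathcal O}})/\mathrm{Tor}(\sigma(H_{{\mathcal O}}))$ is cyclic. Writing any pair of elements as (torsion)$\cdot$(power of the generator) and using bilinearity and symmetry of $w$ (Proposition \ref{pro:32}) together with $w(\psi,\psi)=0$, every pairing reduces to one in which one entry is a finite-order rotation whose two fixed points are fixed by the other entry, where the second bullet of Lemma \ref{lem:Handel} gives $w=0$. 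Your ``cf.\ Lemma \ref{lem:Handel}'' points at the right tool, but this reduction via the cyclic-mod-torsion structure is the actual content of the step and is missing from your write-up. Once it is inserted, the remainder of your proof goes through.
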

\begin{proof}
Consider the image $[G]$ of $G$ in $\mathrm{MCG}({\mathbb S}^{2},F)$.
There exists a realization $\sigma: [G] \to G_{0}$ of $[G]$
as an irreducible group (Lemma \ref{lem:model}).

The group $H_{j}$ is a normal subgroup of index $2$ of $G$ for $j \in \{1,2\}$.
The groups $H_{1}$ and $H_{2}$ are different since $[{\mathcal O}_{1}] \neq [{\mathcal O}_{2}]$.
Consider $\eta_{1} \in H_{2} \setminus H_{1}$ and $\eta_{2} \in H_{1} \setminus H_{2}$.
The group $\sigma (H_{j})$ has global fixed points in $F$ and
$\sigma (H_{j})/\mathrm{Tor}(\sigma (H_{j}))$ is cyclic.
Lemma \ref{lem:Handel} implies that
$w_{|\sigma (H_{j}) \times \sigma(H_{j})}$ is trivial and so is
$(w_{F})_{|H_{j} \times H_{j}}$.

Consider $\phi \in H_{1} \cap H_{2}$, we have
that $w_{F}(\phi,\alpha_{1})=0$ for any $\alpha_{1} \in H_{1}$ since $\phi \in H_{1}$ and
$w_{F}(\phi, \eta_{1})=0$ since $\eta_{1}, \phi \in H_{2}$.
Since $w_{F}(\phi,.)$ is a morphism of groups and $G= \langle H_{1},\eta_{1} \rangle$, we obtain that
$\phi$ belongs to $J$. Therefore $H_{1} \cap H_{2}$ is contained in $J$.
The mapping $(w_{F})_{|G \times G}$
is not trivial but $(w_{F})_{|H_{1} \times H_{1}}$ is.
Thus there exists $\beta_{1} \in H_{1}$ such that $w_{F}(\beta_{1},\eta_{1})=1$.
Consider the normal subgroup
\[ J_{1} = \{ \eta \in G : w_{F}(\eta,\beta_{1})=0 \}  \]
of $G$.
It is clear that $H_{1} \subset J_{1}$, $J_{1} \neq G$ and that
$J_{1}$ has index $2$. Thus $H_{1}$ and $J_{1}$ coincide and
$J$ is contained in $H_{1}$. Analogously we obtain
$J \subset H_{2}$. We get $J \subset H_{1} \cap H_{2}$ and then $J = H_{1} \cap H_{2}$.

Notice that if ${\mathcal O}_{3}$ is a third $2$-orbit in $F$ the previous proof shows that
$J = H_{1} \cap H_{2} \cap H_{3}$ and the orbits of $F$ can not be independent.

Suppose that $F = {\mathcal O}_{1} \cup {\mathcal O}_{2}$.
The group $G/J = \{ Id, [\eta_{1}], [\eta_{2}], [\eta_{1} \eta_{2}]\}$ is isomorphic to $D_{2}$.
We define $H_{3} = \langle J, \eta_{1} \eta_{2} \rangle$, it is a normal subgroup
of index $2$ of $G$ such that
$w_{F}:H_{3} \times H_{3} \to {\mathbb Z}/2{\mathbb Z}$ is the trivial mapping.
Then $H_{3}$ has a global fixed point by Proposition \ref{pro:setup}.
The group
$H_{3}$ has no fixed points in $F$ since $\eta_{1} \eta_{2}$
does not fix any element of $F$.
Since $H_{3}$ is normal of index $2$ in $G$, there exists
a $2$-orbit ${\mathcal O}_{3} \subset \mathrm{Fix}(H_{3})$.
We have $(\eta_{1}\eta_{2})_{|{\mathcal O}_{3}} \equiv Id$,
$(\eta_{1}\eta_{2})_{|{\mathcal O}_{1}} \neq Id$ and
$(\eta_{1}\eta_{2})_{|{\mathcal O}_{2}} \neq Id$.
Thus we obtain
$[{\mathcal O}_{1}] \neq [{\mathcal O}_{3}]$ and
$[{\mathcal O}_{2}] \neq [{\mathcal O}_{3}]$.
\end{proof}
\begin{cor}
\label{cor:noind3}
Let $G \subset \mathrm{Diff}_{+}^{1}({\mathbb S}^{2})$ be a
fixed-point-free nilpotent group.
Then there are no $3$ independent $2$-orbits.
\end{cor}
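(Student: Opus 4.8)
The plan is to obtain the statement as an almost immediate consequence of Lemma \ref{lem:J}, whose second conclusion already asserts that three $2$-orbits with pairwise different classes cannot be independent. The only thing to verify is that the independence hypothesis of the corollary is genuinely stronger than the pairwise-distinct-classes hypothesis required by the lemma; once this is checked, the lemma delivers the contradiction directly.

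I would argue by contradiction. Suppose ${\mathcal O}_{1}$, ${\mathcal O}_{2}$, ${\mathcal O}_{3}$ are three independent $2$-orbits of $G$. By Definition \ref{def:tau}, independence means that the morphism $\tau: G \to ({\mathbb Z}/2{\mathbb Z})^{3}$ is surjective. The first step is the elementary observation that this forces the three classes to be pairwise different. Indeed, if $[{\mathcal O}_{i}] = [{\mathcal O}_{j}]$ for some $i \neq j$, then by the very definition of the class the $i$-th and $j$-th coordinates of $\tau(\phi)$ coincide for every $\phi \in G$, so $\tau(G)$ is contained in the proper subgroup $\{ (a_{1},a_{2},a_{3}) : a_{i}=a_{j} \}$ of $({\mathbb Z}/2{\mathbb Z})^{3}$, contradicting surjectivity. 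Hence $[{\mathcal O}_{1}]$, $[{\mathcal O}_{2}]$, $[{\mathcal O}_{3}]$ are pairwise distinct.

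Having established pairwise distinctness, I would set $F = {\mathcal O}_{1} \cup {\mathcal O}_{2} \cup {\mathcal O}_{3}$ and apply Lemma \ref{lem:J} to this $F$, which is now composed of three $2$-orbits whose classes are pairwise different, exactly as the lemma requires. Its second bullet asserts precisely that the three orbits are \emph{not} independent, contradicting the standing assumption. This closes the argument.

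There is no hard analytic or topological step to surmount here: the substantial content (the Thurston decomposition of Proposition \ref{pro:tnf}, the realization of $[G]$ as an irreducible commutative group via Lemma \ref{lem:ircon}, and the structural analysis of the invariant $w_{F}$) is already entirely absorbed into Lemma \ref{lem:J}. The only genuinely new ingredient is the short group-theoretic remark that surjectivity of $\tau$ precludes any coincidence of classes; the main thing to be careful about is simply to phrase the independence hypothesis correctly so that Lemma \ref{lem:J} applies verbatim.
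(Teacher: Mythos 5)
Your proposal is correct and matches the paper's intent exactly: the corollary is stated as an immediate consequence of Lemma \ref{lem:J}, whose second bullet says three $2$-orbits with pairwise different classes cannot be independent. Your only added step --- that independence of the three orbits forces their classes to be pairwise distinct, since a coincidence of classes would confine $\tau(G)$ to a proper subgroup of $({\mathbb Z}/2{\mathbb Z})^{3}$ --- is precisely the small verification the paper leaves implicit, and it is argued correctly.
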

\begin{proof}[Proof of Theorem \ref{teo:scar}]
The number of classes of $2$-orbits is never equal to $2$ since
the existence of two $2$-orbits implies that there is a third one
by Lemma \ref{lem:J}.
Moreover such a number can not be greater than $3$ by Lemma
\ref{lem:4t3i} and Corollary \ref{cor:noind3}.

Suppose that $G$ is commutative.
Let ${\mathcal O}_{1}$ be a $2$-orbit of $G$.
We denote $H_{1} =  \{ \phi \in G : \phi_{|{\mathcal O}_{1}} \equiv Id \}$.
Since ${\mathcal O}_{1}$ is contained in $\mathrm{Fix}(H_{1})$, we deduce that
$w: H_{1} \times H_{1} \to {\mathbb Z}/2{\mathbb Z}$ is the trivial mapping
(Theorem \ref{teo:FHP}).
Theorem \ref{teo:FHP} also implies that $w:G \times G \to {\mathbb Z}/2{\mathbb Z}$
is not trivial. Let $\eta_{1} \in G \setminus H_{1}$.
Since $H_{1}$ is an index $2$ normal subgroup of $G$ and $w(\eta_{1},\eta_{1})=0$,
there exists $\alpha_{1} \in H_{1}$ such that $w(\alpha_{1},\eta_{1})=1$. We define
\[ H_{2} = \{ \phi \in G : w (\phi, \eta_{1})=0 \} . \]
The group $H_{2}$ has index $2$ since
$\alpha_{1} \in H_{1} \setminus H_{2}$.
Thus $J:= H_{1} \cap H_{2}$ is a (normal) subgroup of $G$ of index $4$.
Indeed we have $H_{2} = \langle J, \eta_{1} \rangle$.
Since $w_{|H_{1} \times H_{1}} \equiv 0$ and $w_{|H_{2} \times \{\eta_{1}\}} \equiv 0$,
we deduce $w_{|H_{2} \times H_{2}} \equiv 0$.
Hence the set $\mathrm{Fix}(H_{2})$ is non-empty (Theorem \ref{teo:FHP}).
There exists a $2$-orbit ${\mathcal O}_{2}$ of $G$ contained in $\mathrm{Fix}(H_{2})$.
Since $(\eta_{1})_{|{\mathcal O}_{2}} \equiv Id$ and $(\eta_{1})_{|{\mathcal O}_{1}} \not \equiv Id$,
we obtain $[{\mathcal O}_{1}] \neq [{\mathcal O}_{2}]$.
The group $G$ has $3$ classes of $2$-orbits by the first part of the proof.
\end{proof}
Theorems \ref{teo:odd} and \ref{teo:scar}
for commutative groups admit simple proofs by using only the characterization
of commutative subgroups $G$ of $\mathrm{Diff}_{+}^{1}({\mathbb S}^{2})$ with
global fixed points of \cite{FHPs}
(cf. Theorem \ref{teo:FHP}).
\begin{cor}
Let $G \subset \mathrm{Diff}_{+}^{1}({\mathbb S}^{2})$ be a
fixed-point-free nilpotent group.
Let $J$ be the subgroup of elements of $G$
fixing every  $2$-orbit pointwise.
Then it is a normal subgroup
such that $G/J \approx {\mathbb Z}/2{\mathbb Z}$ or
$G/J \approx D_{2}$ depending on whether there are
$1$ or $3$ classes of $2$-orbits.
\end{cor}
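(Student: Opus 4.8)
The plan is to exhibit $J$ as an intersection of finitely many index-two normal subgroups of $G$ and then read off the quotient. First I would record that $G$ actually has a $2$-orbit: Theorem \ref{teo:sphere} gives $G$ a finite orbit of cardinality at most $2$, and since $G$ is fixed-point-free this orbit is not a single point, hence it is a $2$-orbit. Theorem \ref{teo:scar} then tells us that the number of classes of $2$-orbits is exactly $1$ or $3$, which are the two alternatives in the statement.

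For any $2$-orbit $\mathcal{O}$ the restriction map $G \to \mathrm{Sym}(\mathcal{O}) \cong {\mathbb Z}/2{\mathbb Z}$ has kernel $H_{\mathcal O} = \{ h \in G : h_{|\mathcal O} \equiv Id \}$. Because $\mathcal O$ is a single orbit the action is transitive, so this map is onto and $H_{\mathcal O}$ is normal of index exactly $2$; it cannot be all of $G$, since otherwise the two points of $\mathcal O$ would lie in $\mathrm{Fix}(G)$, contradicting the fixed-point-free hypothesis. By the definition of the class $[\mathcal O]$, the subgroup $H_{\mathcal O}$ depends only on $[\mathcal O]$. Hence $J$, the set of elements fixing every $2$-orbit pointwise, equals $\bigcap_{\mathcal O} H_{\mathcal O}$, an intersection of the one or three distinct subgroups $H_{[\mathcal O]}$, and is therefore a normal subgroup of $G$.

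If there is a single class, then $J = H_{\mathcal O}$ and $G/J \cong {\mathbb Z}/2{\mathbb Z}$ at once. If there are three classes I would fix representatives $\mathcal O_1, \mathcal O_2, \mathcal O_3$ with pairwise distinct classes, so that $J = H_1 \cap H_2 \cap H_3$ with $H_1, H_2, H_3$ pairwise distinct index-two normal subgroups. Lemma \ref{lem:J}, applied to $F = \mathcal O_1 \cup \mathcal O_2 \cup \mathcal O_3$, already gives $H_1 \cap H_2 \cap H_3 = H_1 \cap H_2$, so $J = H_1 \cap H_2$. The homomorphism $g \mapsto (gH_1, gH_2)$ then has kernel $H_1 \cap H_2 = J$, and its image is all of $({\mathbb Z}/2{\mathbb Z})^2$ because $H_1 \setminus H_2$ and $H_2 \setminus H_1$ are non-empty. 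Thus $G/J \cong ({\mathbb Z}/2{\mathbb Z})^2 = D_2$.

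Essentially all the real content sits in the earlier results, so I do not expect a serious obstacle. The one point deserving care is the identification of the present $J$ (elements fixing every $2$-orbit pointwise) with the subgroup $J = \{\phi : w_F(\phi,\eta)=0 \ \forall \eta\}$ of Lemma \ref{lem:J} in the three-class case; this is exactly the equality $J = H_1 \cap H_2 \cap H_3$ recorded at the end of the proof of that lemma, which is what lets me collapse the triple intersection to $H_1 \cap H_2$ and conclude.
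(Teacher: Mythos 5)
Your proposal is correct and follows essentially the route the paper intends for this corollary: it assembles Theorem \ref{teo:sphere}/Theorem \ref{teo:scar} for the existence and the count ($1$ or $3$) of classes, identifies $J$ with the intersection of the index-two stabilizers $H_{[\mathcal{O}]}$ (which depend only on the class), and invokes Lemma \ref{lem:J} — whose proof already records both $J=H_{1}\cap H_{2}\cap H_{3}$ and $G/(H_{1}\cap H_{2})\approx D_{2}$ — to collapse the triple intersection in the three-class case. No gaps: the transitivity of $G$ on each $2$-orbit gives index exactly two, and pairwise distinctness of the classes gives surjectivity onto $({\mathbb Z}/2{\mathbb Z})^{2}$.
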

\bibliography{rendu}
\end{document}